\documentclass[hidelinks,onefignum,onetabnum]{siamart250211}

\usepackage{lipsum}
\usepackage{amsfonts}
\usepackage{graphicx}
\usepackage{epstopdf}
\usepackage{algorithmic}
\ifpdf
  \DeclareGraphicsExtensions{.eps,.pdf,.png,.jpg}
\else
  \DeclareGraphicsExtensions{.eps}
\fi

\newsiamremark{remark}{Remark}
\newsiamremark{hypothesis}{Hypothesis}
\crefname{hypothesis}{Hypothesis}{Hypotheses}
\newsiamthm{claim}{Claim}
\newsiamremark{fact}{Fact}
\crefname{fact}{Fact}{Facts}

\headers{An Example Article}{D. Doe, P. T. Frank, and J. E. Smith}

\title{An Example Article\thanks{Submitted to the editors DATE.
\funding{This work was funded by the Fog Research Institute under contract no.~FRI-454.}}}

\author{Dianne Doe\thanks{Imagination Corp., Chicago, IL 
  (\email{ddoe@imag.com}, \url{http://www.imag.com/\string~ddoe/}).}
\and Paul T. Frank\thanks{Department of Applied Mathematics, Fictional University, Boise, ID 
  (\email{ptfrank@fictional.edu}, \email{jesmith@fictional.edu}).}
\and Jane E. Smith\footnotemark[3]}

\usepackage{amsopn}

\usepackage{mathtools}
\usepackage{comment,color}
\usepackage{enumitem}
\usepackage{placeins}

\DeclareMathOperator*{\argmin}{argmin}

\headers{Finite-Time Stabilization via Optimality Principle}{W. Weng, Y. Chitour and P. Mason}

\ifpdf
\hypersetup{
  pdftitle={On Finite-Time Stabilization of Linear System via Optimality Principle},
  pdfauthor={Y. Chitour, P. Mason and W. Weng}
}
\fi

\title{Finite-Time Stabilization of Linear Systems via Optimal Control}

\author{Weihao Weng\thanks{Laboratoire Signaux et Systèmes (L2S), CNRS, Université Paris-Saclay, France (\email{weihao.weng@centralesupelec.fr}
, \email{yacine.chitour@centralesupelec.fr}, \email{paolo.mason@centralesupelec.fr})}
\and Yacine Chitour\footnotemark[1]
\and Paolo Mason\footnotemark[1]
}

\begin{document}

\maketitle

% REQUIRED
\begin{abstract}
This paper presents an optimal control framework for achieving finite-time stabilization of linear systems. By introducing a suitably constructed integral cost function, we derive a new class of nonlinear controllers that guarantee finite-time stability through the application of the optimality principle. The relationship between the resulting optimal control law and the associated value function is analyzed, leading to the derivation of a Hamilton–Jacobi–Bellman (HJB) equation and the study of its regularity properties. Numerical simulations validate the theoretical findings and illustrate the effectiveness of the proposed method. Furthermore, a discussion on estimating the convergence time is provided.

\end{abstract}

% REQUIRED
\begin{keywords}
Finite-time stabilization, optimal control, homogeneity, linear system, Hamilton-Jacobi-Bellman equation
\end{keywords}

% REQUIRED
\begin{MSCcodes}
93D15, 93C05, 49J15, 49L12, 49M05
\end{MSCcodes}

\section{Introduction}
The problem of designing stabilizing feedback laws for dynamical systems in finite time has received sustained attention due to its theoretical significance and broad range of applications in control engineering, robotics, and aerospace systems. Classical stabilization theory ensures asymptotic convergence of system trajectories toward an equilibrium as time tends to infinity (see, e.g., \cite{khalil1996robust,sontag2013mathematical}). However, in many practical contexts, such as attitude control, guidance systems, and constrained optimization, fast or finite-time convergence is highly desirable, motivating the study of finite-time stability (FTS), wherein trajectories reach equilibrium in a finite duration.

%The seminal works of {\color{red}Y rajouter une ref pour Utkin et une pour Levant} {\color{violet} Utkin \cite{utkin1977variable}, Levant \cite{levant2003higher}}, and
%Bhat and Bernstein \cite{bhat2000finite}.

While the pioneering work %of Utkin 
\cite{utkin1977variable} established finite-time convergence to a manifold via discontinuous control laws, %Levant 
\cite{levant1998robust,levant2003higher} bridged the gap by achieving finite-time stabilization of both sliding variables and their derivatives using continuous control inputs, cf. the concept of higher-order sliding mode. Then homogeneous system theory, as a powerful technical tool, has finally been systematically considered for finite-time stabilization, see for instance \cite{bhat2000finite}
where the definitive Lyapunov-based mathematical conditions for the finite-time stabilizability of continuous autonomous systems are established
and also \cite{hong2002finite} for a classical explicit construction of FTS controllers.
Since then, finite-time and fixed-time control have evolved into a rich research area, encompassing both continuous and discontinuous feedback mechanisms~\cite{cruz2017homogeneous,polyakov2013finite, polyakov2015finite,shtessel2014sliding} %(Hong \cite{hong2002finite}; Polyakov et al. \cite{polyakov2013finite, polyakov2015finite}; Shtessel et al. \cite{shtessel2014sliding}; Cruz-Zavala and Moreno \cite{cruz2017homogeneous}). 
and guaranteeing FTS through the design of feedback laws with prescribed convergence rates~\cite{andrieu2008homogeneous,nakamura2009homogeneous,polyakov2020generalized,rosier1992homogeneous}. %(Rosier \cite{rosier1992homogeneous}; Andrieu et al. \cite{andrieu2008homogeneous}; Nakamura et al. \cite{nakamura2009homogeneous}; Polyakov \cite{polyakov2020generalized}). 
For instance, homogeneous controllers have been effectively applied to the stabilization of perturbed chains of integrators~\cite{doi:10.1137/19M1285937, Harmouche02122017}. %(Harmouche et al. \cite{Harmouche02122017}; Chitour et al. \cite{doi:10.1137/19M1285937}).

Parallel to these developments, optimal control theory provides a systematic framework for deriving feedback laws via the minimization of suitable performance indices under system dynamics constraints. The connection between stabilization and optimal control is made explicit through the Hamilton–Jacobi–Bellman (HJB) equation, whose value function characterizes the minimal cost to reach equilibrium. Finite-time stabilization can thus be viewed as an inverse optimal control problem in which the controller minimizes a cost functional ensuring finite termination (see, e.g., \cite{7155501, NAKAMURA2013552,nakamura2010global}). 
%Nakamura et al. \cite{nakamura2010global} and \cite{7155501}). 
However, the proposed cost functionals often lack flexibility due to their complex structure and absence of intuitive justification linking the minimizing control to the cost itself. %(see, e.g., Nakamura et al. \cite{NAKAMURA2013552, nakamura2010global}). 
Moreover, deriving explicit controllers from the optimality principle remains challenging, even for linear dynamics, particularly beyond the linear–quadratic (LQR) setting.

One of the objectives of the paper consists in designing finite-time stabilizers and feedback laws via optimal control for a linear control system of the form 
$\dot x=Ax+B u$ where $(A,B)$
is a controllable pair. Relying on the corresponding Brunovsky form, it is immediate to see that it is enough to consider the linear single-input control system $\dot x=J_nx+e_n u$ with $x\in\mathbb{R}^n$, $J_n$ is the Jordan block of size $n$ corresponding to the zero eigenvalue, $e_n$ %=(0\cdots 0 \ 1)^T$
is the $n$-th element of the canonical basis of $\mathbb{R}^n$, and $u\in\mathbb{R}^n$. 
This is why we will focus on such a control system for the whole paper. The main idea consists in tailoring appropriate cost functions on an infinite time horizon and establishing a feedback form of the optimal control through the regularity properties of the value function. Finite-time convergence is then obtained by choosing cost functions with homogeneity properties. 

A first class of instantaneous cost functions we consider is of the type $\frac{|u|^q}q+F(x)$ with $q\in (1,\infty)$ and $F$ is convex and positive definite. The optimal control problem consists in minimizing the corresponding integral cost %cost function integrated 
over an infinite time horizon, among all controls $u\in L^q(\mathbb{R}_{\geq 0},\mathbb{R})$. Applying standard approximations of the infinite time horizon problem with finite time horizon problems coupled with ad hoc estimates, we prove existence and uniqueness of the minimizer for the infinite time horizon as well as a characterization of the optimal trajectory via the application of the Pontryagin Maximum Principle (PMP). We then study the regularity properties of the value function $V_\infty$ associated with the infinite-time-horizon problem and prove that it has (essentially) the same regularity as the function $F$. It allows us to first establish that $V_\infty$ satisfies a HJB equation valid over $\mathbb{R}^n$ and then to derive comparison results for super and sub-solutions of the HJB equation, yielding also that $V_\infty$ is the unique solution of that equation among positive definite continuously differentiable functions. As a byproduct, we prove that several feedback laws proposed in the literature (cf.\cite{Harmouche02122017,hong2002finite}), yielding finite-time convergence to the origin, can be associated with the present optimal control framework. 

The feedback laws proposed previously for $q\in (1,\infty)$ turn out to be unbounded over the state space $\mathbb{R}^n$. To get (uniformly) bounded feedback laws, one first relies on the %equation \eqref{eq:u-inf-V-inf} relating 
expression of the optimal control $u_\infty$ in terms of the value function $V_\infty$ for a fixed $q\in (1,\infty)$, namely, 
$u_\infty(x) = - \left\lfloor\frac{\partial V_\infty}{\partial x_n} (x)\right\rceil^{\frac{1}{q-1}}$. One is tempted to let $q$ tend to infinity to get somehow a bounded feedback. This intuition is confirmed by considering the optimal control problem with simply $F$ as instantaneous cost (integrated over an infinite time horizon), to be minimized over all controls $u$ taking values in $[-1,1]$. We can recover most of the results established in the case of $q$ finite, showing in particular that the above optimal control problem is the limit as $q$ tends to infinity of the optimal control problems defined for finite $q$ in the sense that there is convergence of their value functions and optimal controls. 

We close the paper with a section devoted to numerical simulations for both finite and infinite $q$, validating the theoretical findings and showing the flexibility of the proposed approach. Possible extensions to the present work include, for instance, fixed-time stabilization, robustness analysis under model perturbations, and more extensive numerical investigations (in particular in the case of uniformly bounded feedbacks). Another line of research would be to seek solutions to HJB inequalities among predefined classes of positive definite functions (polynomials, piecewise continuous)
in the spirit of Proposition~\ref{cor7}.

The structure of the paper goes as follows. Section~\ref{sec:prob} formulates the problem for finite $q$ and presents preliminary results. Section~\ref{sec:hamiltonian} develops the Hamiltonian approach first for finite-time horizon problems and then for infinite-time horizon problems. Section~\ref{sec:regularity} gathers the regularity results we obtain for the value function $V_\infty$, while in Section~\ref{sec:HJB} we provide the proof that $V_\infty$ satisfies a HJB equation and also derive comparison results with and sub- and super-solutions. Then we provide in Section~\ref{sec:FTS} our findings on finite-time stabilization under homogeneity on the function $F$, and we extend in Section~\ref{sec:bounded} all the previous results with bounded feedback laws. Finally, Section~\ref{sec:simu} provides numerical simulations that illustrate and validate the theoretical results.

\subsection{Notations}
Given an Euclidean space $\mathbb{R}^n$, we use $\|\cdot\|$ to denote the corresponding norm, as well as the induced matrix norm, i.e., $\|M\|=\sup_{\|v\|=1} \|Mv\|$ where $M \in \mathbb{R}^{n\times n}$ and $v\in\mathbb{R}^n$. We use $(e_i)_{1\leq i\leq n}$ to denote the standard canonical basis of $\mathbb{R}^n$.

For $p > 0$ and $I$ an interval of $\mathbb{R}$, $\|\cdot\|_{L^{p}(I)}$ denotes the $L^p$-norm over $L^p$-integrable functions defined on $I$. If $sgn$ denotes the usual set-valued sign function (i.e., $sgn(x)=\frac{x}{|x|}$ for $x\neq 0$ and $sgn(0)=[-1,1]$) and $r$ is a nonnegative real number, the generalized power function $\lfloor\cdot\rceil^r$ is defined as $|\cdot|^rsgn(\cdot)$. For $r=(r_i)_{1\leq i\leq n}$, the previous notation is extended to vector power functions on $\mathbb{R}^n$ with $\lfloor\cdot\rceil^r: x\mapsto (\lfloor x_i\rceil^{r_i})_{1\leq i\leq n}$. %Finally, given $r = (r_i)_{1 \leq i \leq n} \in \mathbb{R}^n$, set $r_- \coloneq \min_{1\leq i\leq n} r_i$ and $r^+ \coloneq \max_{1\leq i \leq n} r_i$. 
A $\mathcal{K}_{\infty}$-function $\alpha:\mathbb{R}_{\geq 0}\to \mathbb{R}_{\geq 0}$ is a continuous strictly increasing  and unbounded function such that $\alpha(0)=0$.

\section{Statement of the problem and preliminary results}
\label{sec:prob}
 Let $n \in \mathbb{N}_{\geq 1}$ and a real number $q\in (1,\infty)$.
Consider the control system
\begin{equation}
\label{n-integrator}
    \dot{x}(t)=J_nx(t)+e_n u(t) 
\end{equation}
evolving for $t\geq 0$ in $\mathbb{R}^n$, where $J_n$ is the Jordan block of size $n$ corresponding to the zero eigenvalue and the input $u$ belongs to $L^{q}(\mathbb{R}_{\geq 0},\mathbb{R})$.
Introduce the instantaneous cost
\begin{equation}\label{eq:inst-cost}
\mathcal{L} : (x,u) \in \mathbb{R}^n\times\mathbb{R} \mapsto \frac{|u|^{q}}{q} +F(x),
\end{equation}
where $F$ is a mapping from $\mathbb{R}^n$ to $\mathbb{R}_{\geq 0}$ satisfying the following standing assumption:
\begin{enumerate}[label= (A\arabic*), leftmargin=1cm]
    \item \label{ass1} $F$ 
     is continuously differentiable,
    convex and positive definite, i.e., $F(0) = 0$ and $F(x)>0$ for every $x\neq 0$.
\end{enumerate}

For $T\in (0,\infty]$, the optimal control problem \((OCP)_T\) consists in minimizing the integral cost 
\begin{align}
\mathcal{J}_T:\mathbb{R}^n\times L^{q}([0,T),\mathbb{R})&\longrightarrow [0,\infty]\nonumber\\
(x_0,u) \longmapsto \int_0^T &\mathcal{L}(x(t),u(t))dt,\label{eq:JT}
\end{align}
for trajectories of~\eqref{n-integrator} starting at any $x_0\in\mathbb{R}^n$ and we set 
\[V_T(x_0) = \operatorname*{inf}_{u \in L^{q}([0,T),\mathbb{R})} \mathcal{J}_T(x_0,u).
\]
The function $V_T$ is the \emph{value function} associated with \((OCP)_T\). It is clear that $\mathcal{J}_{T_1}(x_0,u)\leq \mathcal{J}_{T_2}(x_0,u)$ for every $T_1<T_2$ in $(0,\infty]$ and $u\in L^{q}([0,T_2),\mathbb{R})$, and as a consequence $V_{T_1}\leq V_{T_2}$.

We next establish some basic properties concerning the solutions and the value function associated with $(OCP)_T$, and recall some classical notions of homogeneity and related properties that will be useful in the upcoming sections.
\subsection{Basic properties of $(OCP)_T$}

From the convexity of the instantaneous cost $\mathcal{L}$ with respect to the control variable, it easily follows that the integral cost $\mathcal{J}_T$ is also convex, as stated below.
\begin{lemma}
\label{p:conv}
For $T\in \mathbb{R}_{> 0}$ the function  $\mathcal{J}_T$ is convex with respect to the first variable and strictly convex with respect to the second one.
 If $T = \infty$ the same result holds on the domain of $\mathcal{J}_T$, i.e., the subset of $\mathbb{R}^n\times L^{q}([0,T))$ such that $\mathcal{J}_T$ is bounded.
\end{lemma}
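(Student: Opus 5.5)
Since \eqref{n-integrator} is linear, the trajectory depends affinely on the data: by variation of constants,
\[
x(t;x_0,u)=e^{tJ_n}x_0+\int_0^t e^{(t-s)J_n}e_n u(s)\,ds .
\]
Hence for $\lambda\in[0,1]$ and pairs $(x_0,u)$, $(y_0,v)$, the trajectory associated with $(\lambda x_0+(1-\lambda)y_0,\ \lambda u+(1-\lambda)v)$ is exactly $\lambda x(\cdot;x_0,u)+(1-\lambda)x(\cdot;y_0,v)$. The instantaneous cost $\mathcal{L}(x,u)=\frac{|u|^q}{q}+F(x)$ is jointly convex, because $F$ is convex by \ref{ass1} and $u\mapsto |u|^q/q$ is convex. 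Composing a jointly convex function with an affine map and integrating over $[0,T)$ therefore shows that $\mathcal{J}_T$ is jointly convex on $\mathbb{R}^n\times L^q([0,T))$. Restricting to fixed $u$, or to fixed $x_0$, gives convexity in each variable separately.

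For strict convexity in $u$, fix $x_0$ and take $u\neq v$ in $L^q$, meaning they differ on a set $E$ of positive measure. Since $q>1$, the map $s\mapsto |s|^q/q$ is strictly convex on $\mathbb{R}$. So for $\lambda\in(0,1)$,
\[
\frac{|\lambda u(t)+(1-\lambda)v(t)|^q}{q}<\lambda\frac{|u(t)|^q}{q}+(1-\lambda)\frac{|v(t)|^q}{q}
\]
strictly on $E$, and with non-strict inequality elsewhere. The $F$-part still satisfies the non-strict inequality, since the trajectory is affine in $u$ and $F$ is convex. Integrating over $[0,T)$, the strict pointwise inequality on a set of positive measure yields a strict inequality for the integrals, provided both sides are finite.

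For finite $T$, finiteness of $\mathcal{J}_T$ is automatic. Indeed, $u\in L^q([0,T))$ gives $\int_0^T |u|^q<\infty$. Moreover $x$ is continuous on $[0,T]$, hence bounded, so $F(x(t))$ is bounded on $[0,T]$.

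For $T=\infty$, I will restrict to the domain where $\mathcal{J}_\infty<\infty$. This domain is convex, because the joint convexity inequality bounds the cost at a convex combination by the combination of two finite values. On it, all quantities are finite, so the convexity inequalities hold as written. For the strict inequality, write the difference between the right- and left-hand sides as the integral of a nonnegative function that is strictly positive on $E$; this integral is therefore strictly positive, with no $\infty-\infty$ issue.

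The only real subtlety is this bookkeeping: making sure the strict pointwise inequality passes to the integral (positive measure of $E$) and that no infinite values appear when $T=\infty$. Everything else is the standard fact that convex costs composed with affine dynamics give convex functionals.
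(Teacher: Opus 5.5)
Your proof is correct and follows the route the paper indicates. The paper gives no formal proof: it only remarks, just before the statement, that convexity of $\mathcal{L}$ together with linearity of the dynamics yields the claim. You supply the missing details, including the joint convexity in $(x_0,u)$ that the proof of Theorem~\ref{theorem1} actually relies on, and the positive-measure argument for strictness with all quantities finite when $T=\infty$.
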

Based on the previous lemma, we get the following theorem.
\begin{theorem}
\label{theorem1}
Let $T\in (0,\infty]$.  
Then, for every $x_0\in\mathbb{R}^n$,  $(OCP)_T$ admits a unique minimizer in $L^{q}([0,T),\mathbb{R})$.  Furthermore, $V_T$ is a positive definite convex function, and it is strictly convex if $T=\infty$. 
\end{theorem}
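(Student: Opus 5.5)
We will use the direct method of the calculus of variations in the reflexive space $L^{q}([0,T),\mathbb{R})$, relying on Lemma~\ref{p:conv}.

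\textbf{Existence.} Fix $x_0$. We first check that $V_T(x_0)<\infty$. For $T=\infty$ this is the only point needing care. Since $(J_n,e_n)$ is controllable, there is a bounded control $\bar u$ steering $x_0$ to $0$ at time $1$; we extend it by $0$ afterwards. The trajectory then stays at $0$ with $F(0)=0$, so $\mathcal{J}_\infty(x_0,\bar u)<\infty$.

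Take a minimizing sequence $(u_k)$. The bound $\frac1q\|u_k\|_{L^q}^q\le \mathcal{J}_T(x_0,u_k)\le C$ gives a subsequence converging weakly in $L^q$ to some $u^\ast$. For every finite $t$, the trajectory $x_k(t)=e^{tJ_n}x_0+\int_0^t e^{(t-s)J_n}e_n u_k(s)\,ds$ is a continuous linear functional of $u_k|_{[0,t]}$. Hence $x_k\to x^\ast$ pointwise, and in fact uniformly on compact intervals, because the $x_k$ are equi-Lipschitz there by Hölder's inequality.

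Lower semicontinuity comes from two separate facts. The $L^q$ norm is weakly lsc on each $[0,S]$. Since $F\ge 0$ is continuous, Fatou's lemma gives $\int_0^S F(x^\ast)\le\liminf_k\int_0^S F(x_k)$. Letting $S\uparrow T$ by monotone convergence yields $\mathcal{J}_T(x_0,u^\ast)\le\liminf_k \mathcal{J}_T(x_0,u_k)=V_T(x_0)$, so $u^\ast$ is a minimizer.

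\textbf{Uniqueness.} This follows from the strict convexity of $u\mapsto\mathcal{J}_T(x_0,u)$ in Lemma~\ref{p:conv}. If $u_1\neq u_2$ were both minimizers, then $\frac{u_1+u_2}{2}$ would lie in the domain and have strictly smaller cost, which is a contradiction.

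\textbf{Convexity of $V_T$.} Take $x_0,y_0$ with optimal controls $u,v$ and $\lambda\in[0,1]$. Because $x\mapsto\mathcal{L}(x,u)$ and $u\mapsto\mathcal{L}(x,u)$ are both convex and the dynamics are linear, $\mathcal{J}_T$ is jointly convex; the same argument as in Lemma~\ref{p:conv} applies to pairs. The control $\lambda u+(1-\lambda)v$ produces the trajectory $\lambda x+(1-\lambda)y$ from $\lambda x_0+(1-\lambda)y_0$. Therefore
\[
V_T(\lambda x_0+(1-\lambda)y_0)\le \lambda V_T(x_0)+(1-\lambda)V_T(y_0).
\]

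\textbf{Positive definiteness.} We have $V_T(0)=0$ using $u=0$. For $x_0\neq0$, the minimizer gives a continuous trajectory with $x(0)=x_0\neq 0$. Then $F(x(t))>0$ on a neighbourhood of $t=0$, so $V_T(x_0)>0$.

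\textbf{Strict convexity for $T=\infty$.} This is the main obstacle. Suppose $x_0\neq y_0$ and the convexity inequality is an equality for some $\lambda\in(0,1)$. Then $\lambda u+(1-\lambda)v$ is optimal for the intermediate point, and equality must hold pointwise a.e. in the integrand. Strict convexity of $|\cdot|^q$ then forces $u=v$ a.e.

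With $u=v$, the trajectories satisfy $x(t)-y(t)=e^{tJ_n}(x_0-y_0)$. This is a nonzero polynomial in $t$, so it is unbounded unless $x_0-y_0\in\ker J_n$, i.e. it is a multiple of $e_1$. We need to rule out both cases.

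In the unbounded case, we use that finite cost forces the trajectories to tend to $0$. Integrability of $F(x(t))$ together with $u\in L^q$ gives uniformly continuous trajectories, hence $x(t)\to 0$ via positive definiteness of $F$. The same holds for $y(t)$, so $x-y\to 0$, which contradicts unboundedness.

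In the constant case $x-y\equiv c\neq0$, the same decay argument gives $c=0$, again a contradiction.

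The delicate point is precisely this convergence $x(t)\to0$ for finite-cost trajectories. It relies on $F(x)\ge\alpha(\|x\|)$ on bounded sets together with Hölder estimates on the oscillation of $x$ over unit time intervals. I would prove this first as an auxiliary claim.
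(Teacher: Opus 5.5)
Your proof is correct. For convexity, positive definiteness and strict convexity of $V_\infty$ it follows the paper's argument. You take convex combinations of optimal controls. In the equality case, strict convexity of $|\cdot|^q$ gives $u=v$ a.e., so the two trajectories differ by $e^{tJ_n}(x_0-y_0)$, which cannot tend to $0$ while both trajectories do.

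The difference is in what you make self-contained. For existence and uniqueness the paper only cites Lee--Markus. You instead run the direct method in the reflexive space $L^q$ and get uniqueness from strict convexity in $u$. This is sound, and it makes explicit a fact the paper uses silently: $V_\infty$ is finite, by controllability.

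Your treatment of $e^{tJ_n}(x_0-y_0)$ is also more careful than the paper's, which calls this function diverging although it is constant when $x_0-y_0\in\mathbb{R}e_1$. The case split is not even needed. Let $m$ be the largest index with $(x_0-y_0)_m\neq 0$; then the $m$-th component of $e^{tJ_n}(x_0-y_0)$ is the nonzero constant $(x_0-y_0)_m$.

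Two details need fixing.

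First, the $x_k$ are not equi-Lipschitz. Since $u_k$ is only bounded in $L^q$, H\"older gives $\|x_k(t)-x_k(s)\|\le C|t-s|+|t-s|^{1-1/q}\|u_k\|_{L^q}$ on compact intervals. This is equi-H\"older continuity, which is still enough to upgrade pointwise convergence to locally uniform convergence.

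Second, your sketch of the auxiliary claim uses uniform continuity of $x$ on $[0,\infty)$. Because of the drift $J_nx$, that presupposes $x$ is bounded, which you have not shown. A cleaner route is the backward estimate $\|x(s)\|\ge e^{-1}\|x(t)\|-\|u\|_{L^q(t,t+1)}$ for $s\in[t,t+1]$. Combine it with $\inf_{\|y\|\ge r}F(y)>0$ for every $r>0$ (from convexity and positive definiteness of $F$) and with $\int_t^{t+1}F(x(s))\,ds\to0$. Together these give $x(t)\to0$ with no a priori bound.

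Alternatively, use the route implicit in the paper. At that point $V_\infty$ is already known to be finite, convex and positive definite. Hence $V_\infty\ge\alpha_-(\|\cdot\|)$ for some $\alpha_-\in\mathcal{K}_\infty$, and $\alpha_-(\|x(t)\|)\le V_\infty(x(t))\le\int_t^\infty\mathcal{L}(x(s),u(s))\,ds\to0$.
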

\begin{proof}
Existence and uniqueness of the minimizer follows from~\cite[page 215 and Theorem 11]{lee1967foundations}. If $x_0\neq 0$, the optimal trajectory starting at $x_0$ (an absolutely continuous function) is nonzero on a nontrivial time interval so that, since $F$ is positive definite, $V_T(x_0)>0$. We deduce that $V_T$ is positive definite. In order to prove the convexity of $V_T$, let $u^{(1)},u^{(2)}\in L^q([0,T))$ be the optimal controls corresponding to two distinct initial conditions $x^{(1)},x^{(2)}\in \mathbb{R}^n$.
Then,  from the convexity of $F$ and the linearity of the dynamics,
\begin{align}
V_T(\alpha x^{(1)}+(1-\alpha) x^{(2)}) & \leq \mathcal{J}_T(\alpha x^{(1)}+(1-\alpha) x^{(2)},\alpha u^{(1)}+(1-\alpha) u^{(2)})\nonumber\\
& \leq \alpha \mathcal{J}_T( x^{(1)}, u^{(1)}) + (1-\alpha) \mathcal{J}_T( x^{(2)},u^{(2)})\nonumber\\
& = \alpha V_T( x^{(1)}) + (1-\alpha) V_T( x^{(2)}),\label{VTconvex}
\end{align}
for every $\alpha\in (0,1)$, that is, $V_T$ is convex. In the case $T=\infty$ one observes that $u^{(1)},u^{(2)}$ are two distinct elements of $L^q$ since otherwise the difference between the optimal trajectories would be given by the diverging function  $t\mapsto e^{J_n t}(x^{(1)}-x^{(2)})$, contradicting the fact that both trajectories must converge to zero due to the boundedness of $V_\infty(x^{(1)}), V_\infty(x^{(2)})$. As $u^{(1)}\neq u^{(2)}$ and by the strict convexity of $\mathcal{J}_\infty$ with respect to $u$ one then deduces that the second inequality in~\eqref{VTconvex} is strict for every $\alpha\in(0,1)$, implying the strict convexity of the map $V_\infty$. 
\end{proof}
Recalling that positive definite convex functions are continuous and radially unbounded, we have the following result (see, e.g.,~\cite[Lemma 4.3]{khalil2002nonlinear}).
\begin{corollary}
\label{lemma-boundkinfty}
Given $T>0$ there exist $\alpha_-,\alpha_+\in \mathcal{K}_\infty$ such that
\[\alpha_-(\|x_0\|)\leq V_T(x_0) \leq \alpha_+(\|x_0\|).\]
Furthermore, the function $\alpha_+$ may be assumed to be independent of $T$, i.e., 
\[V_T(x_0) \leq V_S(x_0) \leq \alpha_+(\|x_0\|),\qquad \forall S\in (T,\infty].\]
\end{corollary}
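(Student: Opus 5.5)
The plan is to obtain the two bounds separately, using that each $V_T$ is positive definite and convex by Theorem~\ref{theorem1}. The lower bound will come from the standard fact cited in the paper, and the $T$-independent upper bound from a single explicit control with finite cost at horizon $\infty$, combined with the monotonicity $V_T\leq V_S$ for $T<S$.

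\emph{Lower bound.} For fixed $T\in(0,\infty)$, Theorem~\ref{theorem1} shows that $V_T$ is convex and positive definite, hence continuous. The set $\{\|x\|=1\}$ is compact and $V_T>0$ on it, so $m:=\min_{\|x\|=1}V_T>0$. For $\|x_0\|\geq 1$, convexity and $V_T(0)=0$ give $V_T(x_0/\|x_0\|)\leq V_T(x_0)/\|x_0\|$, hence $V_T(x_0)\geq m\|x_0\|$. For $r\in[0,1]$, set $\beta(r)=\min_{\|x\|\geq r}V_T(x)$. This minimum is attained, because $V_T$ grows at least linearly at infinity. The function $\beta$ is nondecreasing, continuous, positive for $r>0$, and $\beta(0)=0$. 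We then choose $\alpha_-\in\mathcal K_\infty$ below $\beta$ on $[0,1]$ and below $m r$ for $r\geq 1$, for instance by a strictly increasing minorant of $\min(\beta(r),mr)$ such as $\int_0^r\min(\beta(s),ms)\,ds/(1+r)$, suitably adjusted. This is exactly the content of \cite[Lemma 4.3]{khalil2002nonlinear}, which we invoke directly.

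\emph{Upper bound independent of $T$.} By monotonicity it suffices to bound $V_\infty$. Since $V_\infty$ is convex and positive definite (Theorem~\ref{theorem1}), it is enough to show that $V_\infty$ is finite everywhere. A finite-valued convex function is continuous, and \cite[Lemma 4.3]{khalil2002nonlinear} then gives $\alpha_+$ with $V_\infty\leq\alpha_+(\|\cdot\|)$. Concretely, we can take $\alpha_+(r)=\max_{\|x\|\leq r}V_\infty(x)+r$. This is nondecreasing and continuous, vanishes at $0$, and the added $r$ makes it strictly increasing and unbounded.

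For finiteness, fix $x_0$ and steer it to $0$ at time $1$ with the minimum-energy control $u(t)=-e_n^\top e^{J_n^\top(1-t)}W^{-1}e^{J_n}x_0$ on $[0,1]$, where $W$ is the (invertible) controllability Gramian. We then set $u=0$ afterwards, so the trajectory remains at $0$ and $F=0$ there. This control is bounded with finite support, so $\mathcal J_\infty(x_0,u)<\infty$. Indeed, $F$ is continuous and the trajectory stays in a compact set on $[0,1]$, which gives $V_\infty(x_0)<\infty$. Linearity also yields the explicit bound $V_\infty(x_0)\leq C\|x_0\|^q+\max_{\|x\|\leq C\|x_0\|}F(x)$, which is itself of class $\mathcal K_\infty$ up to adding $\|x_0\|$.

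The only delicate point is ensuring strict monotonicity and unboundedness of the comparison functions. This is handled by the standard envelope construction above, so I expect no real obstacle.
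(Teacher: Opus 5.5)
Your proof is correct and follows the paper's route. The paper simply notes that positive definite convex functions are continuous and radially unbounded, invokes \cite[Lemma 4.3]{khalil2002nonlinear}, and obtains the $T$-independent $\alpha_+$ from the monotonicity $V_T\leq V_S\leq V_\infty$; your minimum-energy steering argument additionally makes explicit the finiteness of $V_\infty$, which the paper leaves implicit in Theorem~\ref{theorem1}.
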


\subsection{Generalized homogeneity}
We recall below the notion of homogeneity with respect to a family of dilations (see e.g.~\cite{polyakov2020generalized} for an extensive overview). Dilations are defined as follows.
\begin{definition}
    \label{def1}
    Let $m \in \mathbb{N}_{\geq 1},s=(s_1,...,s_m) \in \mathbb{R}_{> 0}^m$,  and $\epsilon > 0$. We define $\delta_\epsilon^s$ as the $s$-weighted dilation from $\mathbb{R}^m$ to itself given by
    \[\delta_\epsilon^s(x_1,x_2,...,x_m) = (\epsilon^{s_1}x_1, \epsilon^{s_2}x_2,...,\epsilon^{s_m}x_m).\]
\end{definition}
Homogeneity for functions and vector fields is then defined as follows. 
\begin{definition}
    \label{def2}
    Let $m \in \mathbb{N}_{\geq 1},s\in \mathbb{R}_{> 0}^m$,  and  $\mu \in \mathbb{R}$. A mapping $f : \mathbb{R}^m \to \mathbb{R}$ is said to be homogeneous of degree $\mu$ with respect to the family of dilations $(\delta^s_\epsilon)_{\epsilon > 0}$ if $f(\delta_\epsilon^s(x))= \epsilon^{\mu} f(x)$ for all $\epsilon > 0$ and $x \in \mathbb{R}^m$. 
    
    A vector field $\mathcal{G}:\mathbb{R}^m \rightarrow \mathbb{R}^m$ is said to be  homogeneous of degree $\mu$ with respect to the family of dilations $(\delta^s_\epsilon)_{\epsilon > 0}$ if $\mathcal{G}(\delta_\epsilon^s(x))= \epsilon^\mu\delta^s_\epsilon(\mathcal{G}(x))$ for all $\epsilon > 0$ and $x \in \mathbb{R}^m$.
\end{definition}

The following classical result (see, e.g., \cite[Theorem 3.2]{hestenes1966calculus} for a more general formulation) expresses the fact that continuous positive-definite functions which are homogeneous with respect to the same family of dilations are commensurable.
\begin{lemma}
    \label{lem1}
     Let $m \in \mathbb{N}_{\geq 1}$ and $W_1, W_2$ be two positive definite and continuous mappings from $\mathbb{R}^m$ to $\mathbb{R}_{\geq 0}$. Suppose 
    there exist $s \in \mathbb{R}_{> 0}^m$ and $d_1,d_2 >0$ such that $W_1$ and $W_2$ are homogeneous of degree $d_1$ and $d_2$, respectively, with respect to the family of dilations $(\delta_{\epsilon}^s)_{\epsilon>0}$.

     Then, there exist $c_1,c_2 > 0$ such that
     \[c_1W_1^\frac{d_2}{d_1}(x) \leq W_2(x) \leq c_2W_1^\frac{d_2}{d_1}(x), \qquad \forall x \in \mathbb{R}^m.\]
     In particular, there exists $c_1,c_2 > 0$ such that, for every $x \in \mathbb{R}^m$, $c_1\sum_{i=1}^m |x_i|^{\frac{d_2}{s_i}}\leq W_2(x) \leq c_2\sum_{i=1}^m |x_i|^{\frac{d_2}{s_i}}$. 
    The inequalities on the right remain true even if the assumption that $W_2$ is positive definite is dropped.
\end{lemma}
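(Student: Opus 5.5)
The plan is the standard compactness argument on a homogeneous "unit sphere". Introduce the homogeneous norm
\[
N(x)=\sum_{i=1}^m |x_i|^{1/s_i},
\]
which is continuous and positive definite. It is homogeneous of degree $1$ with respect to $(\delta^s_\epsilon)_{\epsilon>0}$, since $|\epsilon^{s_i}x_i|^{1/s_i}=\epsilon|x_i|^{1/s_i}$. Let $S=\{x: N(x)=1\}$. This set is closed, since $N$ is continuous. It is bounded, since $N(x)=1$ forces $|x_i|\le 1$ for every $i$. Hence $S$ is compact, and $0\notin S$.

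For any $x\neq 0$, set $\epsilon=N(x)>0$ and $y=\delta^s_{1/\epsilon}(x)$. Then $N(y)=\epsilon^{-1}N(x)=1$, so $y\in S$ and $x=\delta^s_\epsilon(y)$.

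\textbf{Comparison of $W_1$ and $W_2$.} Since $W_1,W_2$ are continuous and positive definite, they attain strictly positive minima and finite maxima on the compact set $S$. The function
\[
g(y)=\frac{W_2(y)}{W_1(y)^{d_2/d_1}}
\]
is therefore continuous and strictly positive on $S$. Let $c_1=\min_S g>0$ and $c_2=\max_S g<\infty$. For $x=\delta^s_\epsilon(y)$, homogeneity gives
\[
W_2(x)=\epsilon^{d_2}W_2(y), \qquad W_1(x)^{d_2/d_1}=\bigl(\epsilon^{d_1}W_1(y)\bigr)^{d_2/d_1}=\epsilon^{d_2}W_1(y)^{d_2/d_1}.
\]
Hence $W_2(x)/W_1(x)^{d_2/d_1}=g(y)\in[c_1,c_2]$. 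At $x=0$ both sides vanish, so the inequalities hold trivially, using that $W_1(0)=0$ and that $W_2(0)=0$ by homogeneity with $d_2>0$.

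\textbf{The ``in particular'' statement.} Apply the first part with $W_1=N$ and $d_1=1$. This gives $c_1N^{d_2}\le W_2\le c_2N^{d_2}$. It remains to compare $N(x)^{d_2}$ with $\sum_i|x_i|^{d_2/s_i}=\sum_i(|x_i|^{1/s_i})^{d_2}$. This is the elementary equivalence of $\ell^1$ and $\ell^{d_2}$ quasi-norms on $\mathbb{R}^m$ applied to the vector $a=(|x_i|^{1/s_i})_i$:
\[
\min(1,m^{1-d_2})\sum_i a_i^{d_2}\;\le\;\Bigl(\sum_i a_i\Bigr)^{d_2}\;\le\;\max(1,m^{d_2-1})\sum_i a_i^{d_2}.
\]
Equivalently, one can note that $\sum_i|x_i|^{d_2/s_i}$ is itself continuous, positive definite and $d_2$-homogeneous, and apply the first part directly.

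\textbf{Dropping positive definiteness of $W_2$.} If $W_2$ is only continuous, nonnegative and homogeneous, the upper bound still holds. The constant $c_2=\max_S W_2/\min_S W_1^{d_2/d_1}$ remains finite, because the denominator is bounded away from zero on $S$. Only the lower bound needs $W_2>0$ on $S$.

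The only delicate point is ensuring that $S$ is compact and avoids the origin, which the explicit choice of $N$ settles; the rest is routine.
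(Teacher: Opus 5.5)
Your proof is correct and is the standard compactness argument on the homogeneous sphere $\{N=1\}$. The paper does not prove this lemma itself; it cites it as a classical result, whose usual proof is essentially this one. There is one small slip: the lower constant in your $\ell^1$--$\ell^{d_2}$ comparison should be $\min(1,m^{d_2-1})$, not $\min(1,m^{1-d_2})$, since as written it fails for $d_2<1$ (e.g.\ $m=2$, $a=(1,1)$). Your alternative of applying the first part directly with $W_1=\sum_i|x_i|^{d_2/s_i}$ and $d_1=d_2$ makes that display unnecessary, so the argument stands.
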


By applying the previous lemma and the chain rule, we obtain the following result. 
\begin{corollary}
    \label{cor1}
    Let $W\in C^1(\mathbb{R}^m,\mathbb{R})$ be homogeneous of degree $d$ with respect to the family of dilations $(\delta_{\epsilon}^s)_{\epsilon>0}$, for some $d\in\mathbb{R}$ and $s \in \mathbb{R}_{> 0}^m$.
Then, the function $\partial_i W$ is homogeneous of degree $\bar d-s_i$ with respect to $(\delta_{\epsilon}^s)_{\epsilon>0}$ and, for any $1\leq i \leq m$, there exists a constant $c^{(i)}> 0$ such that 
       $|\partial_i W(x)| \leq  c^{(i)}\sum_{j=1}^m |x_j|^{\frac{\bar d-s_i}{s_j}}$ for all $x \in \mathbb{R}^m$. 
\end{corollary}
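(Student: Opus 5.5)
Note that the statement writes $\bar d$ where it means the degree $d$ of $W$; below I write $d$ throughout. The plan has two parts: first prove that each partial derivative is homogeneous by differentiating the homogeneity identity, then get the bound by applying the final clause of Lemma~\ref{lem1}.

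\textbf{Homogeneity of $\partial_i W$.} Fix $\epsilon>0$ and $1\le i\le m$. Differentiate the identity $W(\delta^s_\epsilon(x))=\epsilon^{d}W(x)$ with respect to $x_i$. The map $\delta^s_\epsilon$ is linear and diagonal, so the chain rule gives
\[
\epsilon^{s_i}\,\partial_i W(\delta^s_\epsilon(x))=\epsilon^{d}\,\partial_i W(x).
\]
Hence $\partial_i W(\delta^s_\epsilon(x))=\epsilon^{d-s_i}\partial_i W(x)$ for all $x$ and $\epsilon$. This is homogeneity of degree $d-s_i$ in the sense of Definition~\ref{def2}. Continuity of $\partial_i W$ comes from $W\in C^1$.

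\textbf{The estimate.} Apply the last assertion of Lemma~\ref{lem1} with $W_2=\partial_i W$ and $d_2=d-s_i$. That assertion only needs $W_2$ continuous and homogeneous, not positive definite, so it gives
\[
|\partial_iW(x)|\le c^{(i)}\sum_j|x_j|^{(d-s_i)/s_j}.
\]
Since $\partial_i W$ may change sign, I would apply the lemma to $|\partial_i W|$. This function is continuous and homogeneous of the same degree, because $\epsilon^{d-s_i}>0$.

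\textbf{Main obstacle: the sign of the degree.} Lemma~\ref{lem1} assumes $d_2>0$. When $d_2>0$ the argument is the standard one. Each $x\neq0$ can be written as $x=\delta^s_\epsilon(y)$ with $y$ on the compact set $S=\{\sum_j|y_j|^{1/s_j}=1\}$; note that $\sum_j|x_j|^{1/s_j}$ is homogeneous of degree $1$. Then
\[
|\partial_iW(x)|=\epsilon^{d_2}|\partial_iW(y)|\le \Big(\max_S|\partial_iW|\Big)\,\epsilon^{d_2}.
\]
Finally, $\epsilon^{d_2}$ is comparable to $\sum_j|x_j|^{d_2/s_j}$, because this sum equals $\epsilon^{d_2}\sum_j|y_j|^{d_2/s_j}$ and the last factor is bounded between positive constants on $S$.

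If $d_2\le0$, the same computation still works for $x\neq 0$, with the conventions $|x_j|^{d_2/s_j}=+\infty$ when $x_j=0$ and $d_2<0$, and the sum equal to the constant $m$ when $d_2=0$. In fact continuity of $\partial_iW$ at $0$ forces $\partial_iW\equiv0$ when $d_2<0$, since $\epsilon^{d_2}\partial_iW(y)\to\partial_iW(0)$ as $\epsilon\to0$ would blow up unless $\partial_i W(y)=0$. So the only real care needed is to check these degenerate cases, which are immediate.
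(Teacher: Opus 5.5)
Your proposal is correct and follows the paper's one-line argument: the chain rule applied to the homogeneity identity gives $\partial_i W(\delta^s_\epsilon(x))=\epsilon^{d-s_i}\partial_i W(x)$, and the right-hand inequality of Lemma~\ref{lem1}, applied to the nonnegative function $|\partial_i W|$, gives the bound. Your separate treatment of the case $d-s_i\le 0$ is a worthwhile addition: there Lemma~\ref{lem1} does not literally apply, and continuity at the origin forces $\partial_i W$ to be identically zero or constant, a point the paper leaves implicit.
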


\section{Hamiltonian approach}\label{sec:hamiltonian}

According to Theorem~\ref{theorem1}, the problem $(OCP)_T$ with $T\in(0,\infty]$ admits a unique minimizer for every given initial condition. 
In the finite-horizon case (i.e., on $[0, T]$ with $T<\infty$), such minimizers can be studied thanks to the Pontryagin Maximum Principle (PMP), cf.~\cite{pontryagin2018mathematical}.
For that purpose, let us define over $\mathbb{R}^n\times \mathbb{R}\times\mathbb{R}^n\times\mathbb{R}_{\geq 0}$ the Hamiltonian of the problem 
\begin{equation}\label{eq:ham}
H(x,u,p,\lambda) := p^\top J_n x +up_n -\lambda\left(\frac{|u|^{q}}{q}+F(x)\right),
\end{equation}
and consider the corresponding Hamiltonian system
\begin{equation}
\begin{aligned}
    \dot{x}&=\frac{\partial H}{\partial p}(x,u,p,\lambda), \\
    \dot{p}&=-\frac{\partial H}{\partial x}(x,u,p,\lambda). \label{eq-Ham}
\end{aligned}
\end{equation}
In particular, the first equation coincides with the dynamics~\eqref{n-integrator}.
The PMP asserts that, for every initial condition $x_0\in\mathbb{R}^n$, the optimal trajectory $x^{(T)}$ admits an \emph{extremal} (or \emph{optimal}) \emph{lift} $(x^{(T)},p^{(T)})$, i.e., there exists an absolutely continuous function $p^{(T)}$ taking values in $\mathbb{R}^n$ and a constant value $\lambda\in\lbrace 0, 1 \rbrace$  such that $(p^{(T)},\lambda)$ is not identically equal to zero, the pair $(x^{(T)},p^{(T)})$ is a solution of \eqref{eq-Ham} for some control input $u^{(T)}$ satisfying
\begin{equation}
u^{(T)}(t)\in \argmin_{u\in\mathbb{R}} H(x^{(T)}(t),u,p^{(T)}(t)),\qquad \mbox{a.e. }t\in [0,T],\label{eq:min}
\end{equation}
and, moreover,
\begin{equation}
p^{(T)}(T)=0. \label{eq-transversality}
\end{equation}

Every solution $(x,p)$ of~\eqref{eq-Ham} verifying the properties above is called an \emph{extremal pair}.

Since the Hamiltonian is a concave function with respect to the control variable,~\eqref{eq:min} is equivalent to 
\begin{equation*}
 \frac{\partial H}{\partial u}(x^{(T)}(t),u^{(T)}(t),p^{(T)}(t),\lambda)=0 \iff \lambda u^{(T)}(t)=\lfloor  p^{(T)}_n(t)\rceil^\frac{1}{q-1}.
\end{equation*}
The case $\lambda=0$ can then be excluded easily since, in that case,~the dynamics of the variable $p$ reduces to $\dot{p}^{(T)}(t)=-J_n^\top p^{(T)}(t)$
which, together with \eqref{eq-transversality}, implies that $p^{(T)}\equiv 0$, contradicting the nontriviality of $(p^{(T)},\lambda)$. 
Hence, optimal lifts verify the previous conditions with $\lambda=1$, and the corresponding optimal control is given by 
\begin{equation}
    \label{eq:opt}
u^{(T)}=\lfloor  p^{(T)}_n\rceil^\frac{1}{q-1}.\end{equation}
In particular, optimal pairs are solutions of
\begin{equation}
\begin{aligned}\label{eq-Ham2}
   \dot{x} = J_n x+ \lfloor p_n\rceil^\frac{1}{q-1} e_n, \quad 
    \dot{p} =-J_n^\top p+\nabla F(x).
\end{aligned}
\end{equation}

Whenever $q>2$ or $\nabla F$ is not locally Lipschitz, Equations~\eqref{eq-Ham2} do not necessarily satisfy the usual uniqueness assumptions for solutions of ordinary differential equations. 
However, we establish below a uniqueness result for solutions of~\eqref{eq-Ham2} under the additional terminal condition~\eqref{eq-transversality}.

\begin{proposition}
    \label{prop5}
    Let $T > 0$ and $x_0 \in \mathbb{R}^n$. Then, the equations~\eqref{eq-Ham2} together with~\eqref{eq-transversality} admit a unique solution. 
\end{proposition}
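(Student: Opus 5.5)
Existence is essentially given by the preceding discussion. By Theorem~\ref{theorem1}, $(OCP)_T$ has a minimizer $u^{(T)}$ with trajectory $x^{(T)}$. The PMP provides an absolutely continuous $p^{(T)}$ with $\lambda=1$, since the abnormal case was excluded above. This gives a solution of \eqref{eq-Ham2} satisfying \eqref{eq-transversality}. Uniqueness is the real content. I will show that any solution $(x,p)$ of \eqref{eq-Ham2} with $x(0)=x_0$ and $p(T)=0$ comes from a control that is optimal for $(OCP)_T$. Uniqueness of the minimizer then pins down $u$, and hence $x$ and $p$.

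\textbf{Step 1: sufficiency via convexity.} Let $(x,p)$ be such a solution and set $u=\lfloor p_n\rceil^{\frac{1}{q-1}}$. Then $u\in L^q([0,T])$, because $p$ is continuous. Take any other control $v\in L^q$ with trajectory $y$ from $x_0$. Write $z=y-x$ and $w=v-u$, so that $\dot z=J_nz+e_nw$ and $z(0)=0$.

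Convexity of $F$ gives $F(y)-F(x)\ge \nabla F(x)^\top z$. Convexity of $|\cdot|^q/q$ gives $\frac{|v|^q}{q}-\frac{|u|^q}{q}\ge \lfloor u\rceil^{q-1}w=p_nw$. Adding and integrating,
\[
\mathcal J_T(x_0,v)-\mathcal J_T(x_0,u)\ \ge\ \int_0^T\big(\nabla F(x)^\top z+p_n w\big)\,dt .
\]

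Now compute $\frac{d}{dt}(p^\top z)$ using \eqref{eq-Ham2}:
\[
\frac{d}{dt}(p^\top z)=(-J_n^\top p+\nabla F(x))^\top z+p^\top(J_nz+e_nw)=\nabla F(x)^\top z+p_nw .
\]
Hence the integral equals $p(T)^\top z(T)-p(0)^\top z(0)=0$. This uses $p(T)=0$ and $z(0)=0$. So $u$ is a minimizer of $(OCP)_T$.

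\textbf{Step 2: uniqueness of $u$, $x$ and $p$.} By Theorem~\ref{theorem1} the minimizer is unique, so $u=u^{(T)}$ almost everywhere. Then $x=x^{(T)}$, since the state equation is linear in $x$ once $u$ is fixed.

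Given $x$, the equation for $p$ reads $\dot p=-J_n^\top p+\nabla F(x(t))$. This is linear in $p$ with a continuous forcing term. With the terminal condition $p(T)=0$ it has the unique solution
\[
p(t)=-\int_t^T e^{J_n^\top(s-t)}\nabla F(x(s))\,ds .
\]
Therefore $p=p^{(T)}$.

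\textbf{Main obstacle.} The main care point is the non-Lipschitz nonlinearity $\lfloor p_n\rceil^{1/(q-1)}$, together with the fact that $\nabla F$ is only continuous. Standard ODE uniqueness is therefore unavailable. The argument above sidesteps this by never comparing two solutions of \eqref{eq-Ham2} directly. It relies only on the convexity in Step 1, which turns the PMP conditions into sufficient conditions, and on the uniqueness of the minimizer from Theorem~\ref{theorem1}. The only technical check is that $t\mapsto p^\top z$ is absolutely continuous, so that the fundamental theorem of calculus applies. This holds because both factors are absolutely continuous on $[0,T]$.
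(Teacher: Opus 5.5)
Your proof is correct, but it takes a different route from the paper's.

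\textbf{The paper's argument.} The paper never passes through optimality. It takes two solutions $(x,p)$ and $(\bar x,\bar p)$ and sets $\Phi=(x-\bar x)^\top(p-\bar p)$. The $J_n$ terms cancel, so $\dot\Phi$ is the sum of $\bigl(\lfloor p_n\rceil^{\frac{1}{q-1}}-\lfloor \bar p_n\rceil^{\frac{1}{q-1}}\bigr)(p_n-\bar p_n)$ and $(x-\bar x)^\top(\nabla F(x)-\nabla F(\bar x))$. Both terms are nonnegative by monotonicity. Since $\Phi(0)=\Phi(T)=0$, this forces $\dot\Phi\equiv 0$, hence $p_n=\bar p_n$. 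The controls then coincide, so $x=\bar x$, and the linear costate equation with $p(T)=\bar p(T)=0$ gives $p=\bar p$.

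\textbf{Your argument.} You use the same pairing identity $\frac{d}{dt}(p^\top z)=\nabla F(x)^\top z+p_nw$, combined with the first-order convexity inequalities for $F$ and $|\cdot|^q/q$. This shows that every solution of \eqref{eq-Ham2}--\eqref{eq-transversality} yields a minimizer. You then invoke the uniqueness part of Theorem~\ref{theorem1}.

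\textbf{How they relate.} The two arguments are close relatives. Take your inequality for the pair $(u,\bar u)$ and add it to the one with roles exchanged. The result is exactly $\int_0^T\dot\Phi\,dt\geq 0$ with the paper's monotone terms.

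\textbf{What each buys.} The paper's version is self-contained. It needs the PMP only for existence and requires no information about minimizers. Yours gives a stronger conclusion: the PMP conditions are also sufficient for optimality in $(OCP)_T$. The cost is relying on Theorem~\ref{theorem1}, i.e., on strict convexity of $\mathcal{J}_T$ in $u$ and the cited existence/uniqueness result.

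\textbf{Removing the dependence.} You could avoid that dependence entirely. In Step~1, $\mathcal{J}_T(x_0,v)-\mathcal{J}_T(x_0,u)$ equals the integral of the two nonnegative convexity gaps. If $v$ is also optimal, both gaps vanish a.e. Strict convexity of $|\cdot|^q/q$ then gives $w=0$ a.e., which directly forces two extremal controls to coincide.
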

\begin{proof}
    Let $T >0$ and $x_0 \in \mathbb{R}^n$. The PMP guarantees the existence of a solution of the equations~\eqref{eq-Ham2} satisfying~\eqref{eq-transversality}. Suppose that $(\bar{x},\bar{p})$ and $(x,p)$ are two solutions. Setting $\Phi(t):=(x(t)-\bar x(t))^\top (p(t)-\bar p(t))$, a direct computation shows that
   \begin{equation*}
 \dot  \Phi =  (\lfloor p_n \rceil^{\frac{1}{q-1}}-\lfloor \bar{p}_n \rceil^{\frac{1}{q-1}})(p_n-\bar{p}_n) + (x-\bar x)^\top (\nabla F(x) - \nabla F(\bar x)).
   \end{equation*} 
    The first term on the right-hand side of the previous equality is nonnegative independently of $p_n,\bar{p}_n$, 
while the second term is always nonnegative as a consequence of the convexity of $F$, so that $\dot \Phi\geq 0$. Moreover $\Phi(0)=\Phi(T)=0$, from which we deduce that $\dot \Phi$ is identically equal to zero on $[0,T]$. We deduce that $p_n=\bar{p}_n$, i.e., the extremal trajectories correspond to the same control input. In turn, this implies that $x=\bar x$ and, from~\eqref{eq-Ham2}, that $\frac{d}{dt}(p-\bar p) = -J_n^\top (p-\bar p)$. By~\eqref{eq-transversality} we deduce that $p=\bar p$, concluding the proof of the proposition.
\end{proof}

We next adapt the PMP conditions to the infinite-time horizon problem. 

\begin{theorem}
\label{theorem2}
Given $x_0\in\mathbb{R}^n$, let $u^{\infty}$ and $x^{\infty}$ be the optimal control and associated trajectory, respectively, for $(OCP)_\infty$ starting from $x_0$. Then 
\begin{enumerate}
\item $x^{\infty}$ admits a unique extremal lift $(x^{\infty},p^{\infty})$, i.e., a unique solution of the Hamiltonian dynamics \eqref{eq-Ham2}, and $u^{\infty} =  \lfloor p^{\infty}_n\rceil^\frac{1}{q-1}$.
\item There exists a $\mathcal{K}_{\infty}$-function $C(\cdot)$ such that $\|(x^{\infty},p^{\infty},u^{\infty})\|_\infty \leq C(\|x_0\|)$ and, moreover, $\lim_{t\to \infty}(x^{\infty}(t),p^{\infty}(t),u^{\infty}(t)) = (0,0,0)$.

\end{enumerate}
\end{theorem}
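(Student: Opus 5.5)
The plan is to obtain the infinite-horizon extremal as a limit of the finite-horizon extremals $(x^{(T)},p^{(T)})$ given by Proposition~\ref{prop5}, as $T\to\infty$. Uniqueness of the lift will come from a monotonicity argument in the spirit of Proposition~\ref{prop5}.

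\textbf{Step 1: uniform bounds on the finite-horizon extremals.} Fix $x_0$. By Corollary~\ref{lemma-boundkinfty}, $\int_0^T \big(\tfrac{|u^{(T)}|^q}{q}+F(x^{(T)})\big)dt = V_T(x_0)\le\alpha_+(\|x_0\|)$ uniformly in $T$. To bound $x^{(T)}$ uniformly, I would use the dynamic programming principle: for $t\in[0,T]$,
\[
\alpha_-^{(T-t)}(\|x^{(T)}(t)\|)\le V_{T-t}(x^{(T)}(t))\le V_T(x_0)\le\alpha_+(\|x_0\|).
\]
The lower bound $\alpha_-$ depends on the horizon, so this is only useful for $T-t\ge1$, where $V_{T-t}\ge V_1\ge\alpha_-^{(1)}$. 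On the last unit interval I would instead use the $L^q$ bound on $u^{(T)}$ together with the variation-of-constants formula, which gives boundedness there.

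To bound $p^{(T)}$, I would view $p^{(T)}(t)$ as a (super)gradient of the value function. Since $V_{T-t}$ is convex and $p^{(T)}(t)$ is, up to sign, a subgradient of $V_{T-t}$ at $x^{(T)}(t)$ (standard sensitivity for convex problems, verified via the perturbation $\mathcal J_{T-t}(y,u^{(T)}(t+\cdot))$ and the adjoint equation), it is bounded by the local Lipschitz constant of $V_{T-t}$ on a ball. That constant is controlled uniformly by $\alpha_+$ on a larger ball, because convex functions bounded above on $B(0,2R)$ are Lipschitz on $B(0,R)$ with constant at most $\sup_{B_{2R}}V/R$. This yields $\|(x^{(T)},p^{(T)},u^{(T)})\|_\infty\le C(\|x_0\|)$ with $C$ independent of $T$, and we can take $C\in\mathcal K_\infty$.

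\textbf{Step 2: passage to the limit.} With these uniform bounds, equation~\eqref{eq-Ham2} gives uniformly bounded derivatives. Arzelà–Ascoli then gives a subsequence $T_k\to\infty$ with $(x^{(T_k)},p^{(T_k)})\to(\hat x,\hat p)$ locally uniformly, and $\hat x,\hat p$ solve~\eqref{eq-Ham2}. Next I would show $\hat x=x^\infty$. Using $u^{(T_k)}\rightharpoonup \hat u$ weakly in $L^q_{loc}$ and lower semicontinuity, we get $\mathcal J_\infty(x_0,\hat u)\le\liminf V_{T_k}(x_0)\le V_\infty(x_0)$, so $\hat u=u^\infty$ by uniqueness in Theorem~\ref{theorem1}. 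Since $\hat u=\lfloor\hat p_n\rceil^{1/(q-1)}$ by locally uniform convergence, this gives the identity $u^\infty=\lfloor p^\infty_n\rceil^{1/(q-1)}$ with $p^\infty:=\hat p$, and the bounds pass to the limit.

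\textbf{Step 3: convergence to zero.} For $x^\infty$: $F(x^\infty)$ is integrable and $\dot x^\infty$ is bounded, so $x^\infty$ is uniformly continuous. Hence $F(x^\infty(t))\to0$, and therefore $x^\infty(t)\to0$ by positive definiteness and radial unboundedness (convexity). For $p^\infty$: apply Step 1 from the initial point $x^\infty(t)$, using dynamic programming so that the tail is optimal. This gives $\|p^\infty(t)\|\le C(\|x^\infty(t)\|)\to0$, and then $u^\infty\to0$. This step requires knowing that the lift restarted at time $t$ coincides with the lift obtained from $x^\infty(t)$, which is where uniqueness enters.

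\textbf{Step 4: uniqueness of the lift (main obstacle).} Let $(x^\infty,p)$ and $(x^\infty,\bar p)$ be two bounded lifts of the same optimal trajectory. They give the same $u^\infty$, so $p_n=\bar p_n$. Then $\frac{d}{dt}(p-\bar p)=-J_n^\top(p-\bar p)$, so $p-\bar p=e^{-J_n^\top t}w$. This difference is polynomial in $t$, and its $n$-th component vanishes identically. That forces $w=0$ successively: the last component involves $w_n,w_{n-1}t,\dots$ with the powers of $t$ independent. The subtle point is the meaning of ``unique'': the lift must be understood as the solution of~\eqref{eq-Ham2} with control $u^\infty=\lfloor p_n\rceil^{1/(q-1)}$, which is exactly what the argument uses. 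The genuinely delicate part is the uniform-in-$T$ bound on $p^{(T)}$ in Step 1, in particular justifying the subgradient relation between $p^{(T)}(t)$ and $V_{T-t}$ when $F$ is only $C^1$. If that fails, I would instead bound $p^{(T)}$ by integrating the adjoint equation backward from $p^{(T)}(T)=0$, controlling $\int\|\nabla F(x^{(T)})\|$ through the bound on $x^{(T)}$ and the convexity inequality $\nabla F(x)\cdot x\ge F(x)$.
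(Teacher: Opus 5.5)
Your overall architecture is sound: $T$-uniform bounds, compactness, identification of the limit with $x^\infty$ through uniqueness of the minimizer, and uniqueness of the lift via $p_n=\bar p_n$ plus observability. It differs from the paper mainly in how the bounds are obtained.

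\emph{The paper's route.} It bounds $x^{(T)}$ only on $[0,1]$. It bounds $p^{(T)}(0)$ by inverting the injective map $y\mapsto\varphi(\cdot)^\top y$ into $L^{q/(q-1)}([0,1])$, using $|p^{(T)}_n|^{q/(q-1)}=|u^{(T)}|^q$. It then invokes continuous dependence for ODEs without uniqueness. Bounds along the whole trajectory come only afterwards, from monotonicity of $V_\infty(x^\infty(t))$ and the restart argument.

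\emph{Your route.} Your DPP bound on $x^{(T)}$ over the whole horizon is valid. So is your subgradient bound on $p^{(T)}$, and the subgradient relation is not delicate. For any competitor $(y,v)$, convexity of $\mathcal L$ together with $\nabla F(x)=\dot p+J_n^\top p$ and $\lfloor u\rceil^{q-1}=p_n$ gives $\mathcal L(y,v)-\mathcal L(x,u)\geq \frac{d}{ds}\big(p^\top(y-x)\big)$. Integrating on $[t,T]$ with $p(T)=0$ yields $-p^{(T)}(t)\in\partial V_{T-t}(x^{(T)}(t))$, with $F$ merely $\mathcal C^1$. Your fallback of integrating the adjoint equation backward from $T$ would not give $T$-uniform bounds, because of the polynomial growth of $e^{J_n^\top s}$. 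Having bounds on the whole horizon does make your compactness step cleaner than the paper's, since the limit is automatically defined on all of $\mathbb{R}_{\geq 0}$. Step 4 is routine, not the main obstacle.

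\emph{The genuine gap.} It is the claim ``we can take $C\in\mathcal K_\infty$'', which Step 3 needs to get $p^\infty(t)\to0$ and $u^\infty(t)\to 0$. Your convex-Lipschitz estimate gives $\|p^{(T)}(t)\|\leq \alpha_+(R+r)/r$ for $\|x^{(T)}(t)\|\leq R$ and any $r>0$. For a general $\mathcal K_\infty$ bound $\alpha_+$ from Corollary~\ref{lemma-boundkinfty}, this does not tend to $0$ as $R\to0$, whatever $r$ you choose, unless $\alpha_+(s)/s\to 0$. For instance, it stays at least $1$ if $\alpha_+(s)=\sqrt{s}$ near the origin. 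So your argument gives $T$-uniform boundedness, but not a bound vanishing at the origin.

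\emph{The missing fact} is $V_\infty(x)=o(\|x\|)$ as $x\to0$, equivalently $\partial V_\infty(0)=\{0\}$. It is easy to supply:
\begin{itemize}
\item Pick $K$ with $J_n+e_nK^\top$ Hurwitz and $\|e^{(J_n+e_nK^\top)t}\|\leq Me^{-\lambda t}$, and use the control $u=K^\top x$.
\item Use $F(y)\leq\|y\|\max_{\|z\|\leq\|y\|}\|\nabla F(z)\|$.
\item This gives $V_\infty(x_0)\leq c\|x_0\|^q+\frac{M}{\lambda}\|x_0\|\max_{\|z\|\leq M\|x_0\|}\|\nabla F(z)\|$. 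Since $q>1$, $\alpha_+$ can be chosen with $\alpha_+(s)/s\to 0$, and then $r=R$ gives $\|p^{(T)}(t)\|\leq\alpha_+(2R)/R\to0$.
\end{itemize}
Alternatively, adopt the paper's estimate for $p^{(T)}(0)$. It vanishes at the origin automatically, because it is controlled by $\|u^{(T)}\|_{L^q}^{q-1}\leq (q\alpha_+(\|x_0\|))^{(q-1)/q}$ and by $\max\|\nabla F\|$ on a ball shrinking with $\|x_0\|$.
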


\begin{proof}
We will obtain $(x^{\infty},p^{\infty})$ as uniform limit of the extremal pairs $(x^{(T)},p^{(T)})$ as $T$ tends to infinity. 

By~Corollary~\ref{lemma-boundkinfty} there exists a
$\mathcal{K}_\infty$-function $\alpha_+$ such that 
\[V_T(x) \leq V_\infty(x) \leq \alpha_+(\|x\|)\]
for every $T\in (0,\infty]$. Using the variation of constants formula, one can express the value $x^{(T)}(t)$ for $t\in [0,1]$ (assuming without loss of generality $T\geq 1$) as 
\[
    x^{(T)}(t)=e^{J_n t}x_0+\int_0^t
e^{J_n(t-\tau)}e_n u^{(T)}(\tau)\, d\tau,
\]
so that, by H\"older's inequality and $\|e^{J_n s}\| \leq e^t$ for $s\in [0,t]$, it holds for $t\in [0,1]$
\begin{align}
\label{bound-xT}
\|x^{(T)}(t)\| & \leq e^t \left(\|x_0\|+t^{\frac{q-1}{q}}\|u^{(T)}\|_{L^q}\right)\nonumber\leq e^t \left(\|x_0\|+t^{\frac{q-1}{q}}(q V_T(x_0))^{\frac{1}{q}} \right)\nonumber\\
& \leq e \left(\|x_0\|+(q \alpha_+(\|x_0\|))^{\frac{1}{q}} \right).
\end{align}

Similarly, we prove below that $p^{(T)}(0)$ is bounded by a 
$\mathcal{K}_\infty$-function of $\|x_0\|$. For this purpose, let us write
\[p^{(T)}_n(t) = \varphi(t)^\top \left(p^{(T)}(0)+\int_0^t e^{J_n^\top s}\nabla F(x^{(T)}(s))ds\right),\quad  \varphi(t) := e^{-J_n^\top t} e_n.\]
From~\eqref{eq:opt} and Corollary~\ref{lemma-boundkinfty}, the map $p^{(T)}_n(\cdot)$ must be  bounded in $L^{\frac{q}{q-1}}([0,1],\mathbb{R})$ by a $\mathcal{K}_\infty$-function of $\|x_0\|$, and similarly for $t\mapsto \Phi_0(t):=\varphi(t)^\top\int_0^t e^{J_n^\top s}\nabla F(x^{(T)}(s)) ds$ as a consequence of \eqref{bound-xT} and the fact that  $\nabla F$ is continuous and satisfies $\nabla F(0)=0$.
Let $\Phi:\mathbb{R}^n\to L^{\frac{q}{q-1}}([0,1],\mathbb{R})$ be the linear and continuous map defined as $\Phi(y) :=  \varphi(\cdot)^\top y$. 
Since $(-J_n^\top,e_n^\top)$ is observable, $\Phi$
is injective and hence admits a linear and continuous inverse map from $\Phi(\mathbb{R}^n)$ to $\mathbb{R}^n$. We get that $p^{(T)}(0) = \Phi^{-1} \left(p^{(T)}_n(\cdot) - \Phi_0(\cdot)\right)$
so that there exists $M>0$ a $\mathcal{K}_\infty$-function $\psi$ for which
\begin{equation}
\label{bound-p0}
\|p^{(T)}(0)\| \leq 
M\left(\|p^{(T)}_n(\cdot)\|_{L^{\frac{q}{q-1}}} + \left\|\Phi_0(\cdot)\right\|_{L^{\frac{q}{q-1}}} \right) \leq \psi(\|x_0\|).
\end{equation}

Uniform boundedness of $(p^{(T)}(0))_{T\geq 1}$ implies the existence of a converging sequence $(p^{(T_k)}(0))_{k\in\mathbb{N}}$, with $\lim_{k\to\infty} T_k = \infty$ so that, by \cite[Theorem 3.2]{hartman2002ordinary}, the sequence $(x^{(T_k)}(\cdot),p^{(T_k)}(\cdot))_{k\in\mathbb{N}}$ converges, uniformly on compact intervals, to a solution $(x^{\infty}(\cdot),p^{\infty}(\cdot))$ of~\eqref{eq-Ham2}. In particular $x^{\infty}$ is a solution of \eqref{n-integrator} starting at $x_0$ associated with a control $u^{\infty}$ satisfying~\eqref{eq:opt}.
Then, for every $T>0$,
\begin{align*}
\mathcal{J}_T(x_0,u^{\infty})& = \lim_{k\to \infty} \mathcal{J}_T(x_0,u^{(T_k)})\leq \lim_{k\to \infty} \mathcal{J}_{T_k}(x_0,u^{(T_k)})= \lim_{k\to \infty}V_{T_k}(x_0) \leq V_\infty (x_0).
\end{align*}
Hence $V_{\infty}(x_0)\leq  \mathcal{J}_\infty(x_0,u^{\infty}) =\lim_{T\to \infty} \mathcal{J}_T(x_0,u^{\infty}) \leq V_\infty (x_0)$,
that is, $(x^{\infty},p^{\infty})$ is an extremal lift associated with the optimal solution $x^{\infty}$ of $(OCP)_\infty$ starting at $x_0$. The uniqueness of the extremal lift follows from the uniqueness of the optimal control $u^{\infty} =  \lfloor p^{\infty}_n\rceil^\frac{1}{q-1}$ and the observability of $(-J_n^\top,e_n^\top)$. Item 1 is proved.

To prove Item 2, first observe that $V_\infty(x^{\infty}(\cdot))$ is non-increasing by optimality of the trajectory $x^{\infty}$, so that, by Corollary~\ref{lemma-boundkinfty},
\[\|x^{\infty}(t)\| \leq \alpha_-^{-1}(V_\infty(x^{\infty}(t))) \leq \alpha_-^{-1}(V_\infty(x_0)) \leq  \alpha_-^{-1}(\alpha_+(\|x_0\|)). \]
Since for any $t\geq 0$, $(x^{\infty}(t +\cdot),p^{\infty}(t +\cdot))$  is the extremal lift associated with the optimal trajectory starting from $x^{\infty}(t)$. Then, by~\eqref{bound-p0},
\[\|p^{\infty}(t)\| \leq \psi(\|x^{\infty}( t)\|) \leq \psi(\alpha_-^{-1}(\alpha_+(\|x_0\|))).\]
By~\eqref{eq:opt} we deduce the existence of a $\mathcal{K}_\infty$-function $C$ such that $\|(x^{\infty},p^{\infty},u^{\infty})\|_\infty \leq C(\|x_0\|)$. To conclude the proof, it is enough to observe that 
\begin{align*}
\lim_{t\to \infty}\|x^{\infty}(t)\| &\leq \lim_{t\to \infty}\alpha_-^{-1}(V_\infty(x^{\infty}(t))) = \alpha_-^{-1}\left( \lim_{t\to \infty} V_\infty(x^{\infty}(t))\right)\\
& =  \alpha_-^{-1}\left( \lim_{t\to \infty} \int_t^\infty \mathcal{L}(x^{\infty}(s),u^{\infty}(s))ds \right)=0
\end{align*}
and that $\lim_{t\to \infty} \|p^{\infty} (t)\|  \leq \lim_{t\to \infty}\psi(\|x^{\infty}( t)\|) =0$.
\end{proof}

\section{Regularity properties of the value function}\label{sec:regularity}
We next study the regularity of the value function associated with 
$(OCP)_\infty$.
\begin{proposition}
    \label{prop14}
    The value function $V_\infty$ is continuously differentiable on $\mathbb{R}^n$ and $\nabla V_\infty(x_0)=-p^{\infty}(0)$, where $(x^{\infty},p^{\infty})$ is the extremal lift associated with the optimal trajectory $x^{\infty}$ starting at $x_0$.
\end{proposition}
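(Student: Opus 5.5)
The plan is to exploit convexity of $V_\infty$ (Theorem~\ref{theorem1}). A convex function is differentiable at $x_0$ as soon as its subdifferential there is a singleton. A convex function that is differentiable everywhere is automatically $C^1$. So it suffices to show that $\partial V_\infty(x_0)=\{-p^\infty(0)\}$ for every $x_0$, by proving that $-p^\infty(0)$ is a lower support vector. Concretely, fix $x_0$ with optimal pair $(x^\infty,u^\infty)$ and lift $p^\infty$ given by Theorem~\ref{theorem2}. We aim to show
\[
V_\infty(y)\geq V_\infty(x_0)-p^\infty(0)^\top (y-x_0),\qquad \forall y\in\mathbb{R}^n .
\]

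\textbf{Step 1: the support inequality.} Let $(y(\cdot),v)$ be the optimal pair from $y$, and set $\xi=y-x^\infty$, $w=v-u^\infty$, so that $\dot\xi=J_n\xi+e_nw$. Use convexity of $F$ and of $u\mapsto |u|^q/q$, with $\nabla F(x^\infty)=\dot p^\infty+J_n^\top p^\infty$ and $\lfloor u^\infty\rceil^{q-1}=p_n^\infty$ from~\eqref{eq-Ham2} and~\eqref{eq:opt}. On $[0,T]$ this gives
\[
\mathcal{J}_T(y,v)-\mathcal{J}_T(x_0,u^\infty)\geq \int_0^T \big(p^\infty_n w+(\dot p^\infty+J_n^\top p^\infty)^\top\xi\big)dt=\big[p^{\infty\top}\xi\big]_0^T .
\]
Let $T\to\infty$. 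By Theorem~\ref{theorem2}, $p^\infty(T)\to0$, and both $x^\infty(T)$ and $y(T)$ stay bounded (indeed they tend to $0$). So the boundary term at $T$ vanishes, leaving $V_\infty(y)-V_\infty(x_0)\geq -p^\infty(0)^\top(y-x_0)$. Hence $-p^\infty(0)\in\partial V_\infty(x_0)$.

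\textbf{Step 2: the subdifferential is a singleton.} This is the main obstacle. The support inequality alone gives only one subgradient; I need an upper estimate of the form
\[
V_\infty(x_0+h)\leq V_\infty(x_0)-p^\infty(0)^\top h+o(\|h\|).
\]
I would build a competitor control from $x_0+h$ that steers back onto the optimal trajectory in unit time. On $[0,1]$, take $u^\infty+w_h$, where $w_h$ is the minimum-$L^2$-norm control steering $\dot\xi=J_n\xi+e_nw$ from $h$ to $0$ in time $1$. By controllability, $w_h$ is linear in $h$, bounded in $L^\infty$ by $c\|h\|$, with $\|\xi_h\|_\infty\leq c\|h\|$. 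After time $1$, use $u^\infty$.

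The cost difference is then an integral over $[0,1]$ only. Expand it to first order using $C^1$-regularity of $F$ and of $|u|^q/q$ (valid since $q>1$), together with uniform continuity on compacts, where $u^\infty$ is bounded by Theorem~\ref{theorem2}. The expansion gives
\[
\int_0^1\big(p_n^\infty w_h+\nabla F(x^\infty)^\top\xi_h\big)dt+o(\|h\|)=-p^\infty(0)^\top h+o(\|h\|),
\]
by the same integration by parts as in Step 1, with $\xi_h(1)=0$.

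\textbf{Step 3: conclusion.} Combining Steps 1 and 2, $V_\infty$ is differentiable at every $x_0$ with $\nabla V_\infty(x_0)=-p^\infty(0)$. Continuity of the gradient then follows from the standard fact that a convex function that is differentiable everywhere is $C^1$. Alternatively, one can argue directly: the boundedness of $p^\infty(0)$ in~\eqref{bound-p0} lets one extract a convergent subsequence, and the uniqueness of subgradients identifies its limit.
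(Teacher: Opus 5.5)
Your proof is correct, but it takes a somewhat different route from the paper's.

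\textbf{The paper's route.} The paper builds its competitor from $x_0+h\delta_0$ with a Hurwitz feedback $u_K=u^\infty+K^\top(x_K-x^\infty)$. The perturbation $h e^{(J_n+e_nK^\top)t}\delta_0$ therefore only decays exponentially and never vanishes. The difference quotient is handled on $[0,\infty)$ by dominated convergence, one direction $\delta_0$ at a time. The paper never shows directly that $-p^\infty(0)$ is a subgradient. Instead it sends $h\to0^\pm$ in the one-sided bound $hv^\top\delta_0\le \mathcal{J}_\infty(x_0+h\delta_0,u_K)-V_\infty(x_0)$, which forces every subgradient $v$ to equal $-p^\infty(0)$. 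It then relies implicitly on the subdifferential of a finite convex function being nonempty.

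\textbf{Your route.} Your unit-time minimum-energy steering makes the competitor rejoin $x^\infty$ exactly at $t=1$, so the whole comparison lives on a compact interval. The remainder is then $o(\|h\|)$ uniformly in the direction, using the $L^\infty$ bounds of Theorem~\ref{theorem2} and uniform continuity of $\nabla F$ and $\lfloor\cdot\rceil^{q-1}$ on compacts. This needs controllability rather than stabilizability and avoids any infinite-horizon integrability issue.

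Your Step~1 is a verification-type argument combining convexity of $\mathcal{L}$, the adjoint equation, and $p^\infty(T)\to0$. Strictly speaking it is not needed once Step~2 holds: nonemptiness of $\partial V_\infty(x_0)$ together with your uniform upper estimate already gives $\partial V_\infty(x_0)=\{-p^\infty(0)\}$. It does, however, give the global support inequality explicitly, and it makes differentiability at $x_0$ independent of the convexity of $V_\infty$. Convexity, or your compactness alternative based on~\eqref{bound-p0}, is then only used for continuity of the gradient.

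\textbf{Trade-off.} The paper's Hurwitz construction is reused verbatim in Proposition~\ref{prop6} for the H\"older estimate. Your steering competitor would serve equally well there, with all remainders integrated over $[0,1]$ only.
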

\begin{proof}
   Let $K \in \mathbb{R}^{n}$ such that $J_n+e_n K^\top$ is Hurwitz, $x_0, \delta_0 \in \mathbb{R}^n$, and $h \in \mathbb{R}$. 
   
Let $x_K$ be the solution of~\eqref{n-integrator} starting from $x_0+h\delta_0$ associated with $u_K:=u^{\infty}+K^\top(x_K-x^{\infty})$. One has
    \begin{equation}
        V_\infty(x_0+  h\delta_0)  -V_\infty(x_0) \leq \mathcal{J}_\infty(x_0+  h\delta_0,u_K)  -V_\infty(x_0).
\label{eq24-new}
    \end{equation}
    Clearly $x_K(t) = x^{\infty}(t) + h\delta(t)$, where $\delta(t) = e^{(J_n+e_n K^\top)t}\delta_0$ for $t\geq 0$, and, using
    the Lebesgue dominated convergence theorem together with~\eqref{eq:opt} and \eqref{eq-Ham2}, we have 
    \begin{align}
\lim_{h\to 0}\frac1h & \big(\mathcal{J}_\infty(x_0+  h\delta_0,u_K)  -V_\infty(x_0)\big)\nonumber\\
= & \lim_{h\to 0}\int_0^\infty \frac1h \left(\frac{|u_K(t)|^q}{q}-\frac{|u^{\infty}(t)|^q}{q}+F(x_K(t))-F(x^{\infty}(t))\right)dt\nonumber\\
= & \int_0^\infty  \left(\lfloor u^{\infty}(t) \rceil^{q-1}K^\top + \nabla F(x^{\infty}(t))^\top\right)\delta(t)dt\nonumber\\
= & \int_0^\infty  \left(p^{\infty}(t)^\top e_n K^\top + \dot{p}^{\infty}(t)^\top + p^{\infty}(t)^\top J_n\right)\delta(t)dt\nonumber\\
= & \int_0^\infty  \left(p^{\infty}(t)^\top  \dot{\delta}(t) + \dot{p}^{\infty}(t)^\top\delta(t)\right)dt = - p^{\infty}(0)^\top\delta_0,\label{eq25-new}
\end{align}

The subdifferential of the convex function $V_\infty$ at $x\in\mathbb{R}^n$ is defined as the set 
\begin{equation}
\label{eq:subD}
\partial V_\infty(x) = \{v\in\mathbb{R}^n\,:\, v^\top y\leq V_\infty(x+y)-V_\infty(x),\ \forall y\in\mathbb{R}^n\}.
\end{equation}
Then, for every $v\in \partial V_\infty(x_0)$, $\delta_0\in\mathbb{R}^n$, and by using~\eqref{eq24-new} $h v^\top \delta_0 \leq \mathcal{J}_\infty(x_0+  h\delta_0,u_K)  -V_\infty(x_0)$, so that
\begin{align*}
 \lim_{h\to 0^-}\frac{\mathcal{J}_\infty(x_0+  h\delta_0,u_K)  -V_\infty(x_0)}h \leq   v^\top \delta_0 \leq \lim_{h\to 0^+} \frac{\mathcal{J}_\infty(x_0+  h\delta_0,u_K)  -V_\infty(x_0)}h.
\end{align*}       
The limits on both sides exist and are equal to $-p^{\infty}(0)^\top \delta_0$ by~\eqref{eq25-new}. Hence, by arbitrariness of $\delta_0$, it follows that $\partial V_\infty(x_0) = \{-p^{\infty}(0)\}$ implying continuous differentiability of $V_\infty$ according to \cite[Theorem 25.1 and Corollary 25.5.1]{rockafellar1997convex}. As a consequence, $\nabla V_\infty(x_0) = -p^{\infty}(0)$.

\end{proof}
As a consequence of the previous result and of~\eqref{eq:opt}, we have the following result.

\begin{corollary}
\label{cor-feedback}
The optimal control ${u}_\infty$ of $(OCP)_\infty$  is given by
\begin{equation}\label{eq:u-inf-V-inf}
{u}_\infty(x) = - \Big\lfloor\frac{\partial V_\infty}{\partial x_n} (x)\Big\rceil^{\frac{1}{q-1}}.
\end{equation}

\end{corollary}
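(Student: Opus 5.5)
The plan is to combine the Hamiltonian description of optimal trajectories in Theorem~\ref{theorem2} with the gradient formula of Proposition~\ref{prop14}, and then use dynamic programming to turn the open-loop relation into a feedback law. Fix $x_0\in\mathbb{R}^n$ and let $(x^{\infty},p^{\infty})$ be the unique extremal lift of the optimal trajectory starting at $x_0$. By Item~1 of Theorem~\ref{theorem2}, the optimal control satisfies $u^{\infty}(t)=\lfloor p^{\infty}_n(t)\rceil^{\frac{1}{q-1}}$ for a.e.\ $t\geq 0$. Proposition~\ref{prop14} gives $p^{\infty}(0)=-\nabla V_\infty(x_0)$, so $p^{\infty}_n(0)=-\frac{\partial V_\infty}{\partial x_n}(x_0)$. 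Since $r\mapsto\lfloor r\rceil^{\frac{1}{q-1}}$ is odd and single-valued for $q\in(1,\infty)$, we get
\[
\lfloor p^{\infty}_n(0)\rceil^{\frac{1}{q-1}}=-\Big\lfloor\frac{\partial V_\infty}{\partial x_n}(x_0)\Big\rceil^{\frac{1}{q-1}} .
\]

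To make this a feedback law valid at every time, I would invoke the dynamic programming principle. As already noted in the proof of Theorem~\ref{theorem2}, for every $t\geq 0$ the shifted pair $(x^{\infty}(t+\cdot),p^{\infty}(t+\cdot))$ is the extremal lift of the optimal trajectory starting at $x^{\infty}(t)$. This follows because the tail of an optimal control is optimal from the intermediate point (otherwise concatenation would lower the cost), together with uniqueness of the extremal lift. Applying Proposition~\ref{prop14} at the point $x^{\infty}(t)$ then gives $p^{\infty}(t)=-\nabla V_\infty(x^{\infty}(t))$ for all $t\geq 0$. Hence, for a.e.\ $t$,
\[
u^{\infty}(t)=-\Big\lfloor\frac{\partial V_\infty}{\partial x_n}(x^{\infty}(t))\Big\rceil^{\frac{1}{q-1}}=u_\infty(x^{\infty}(t)),
\]
with $u_\infty$ as defined in~\eqref{eq:u-inf-V-inf}.

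The main subtlety is the pointwise identification. Since $p^{\infty}$ is absolutely continuous, the relation $u^{\infty}=\lfloor p^{\infty}_n\rceil^{\frac{1}{q-1}}$ can be taken to hold for every $t$ by choosing the continuous representative of $u^{\infty}$. This makes the feedback and open-loop controls coincide everywhere along the trajectory. The resulting feedback $u_\infty$ is continuous because $V_\infty\in C^1$ by Proposition~\ref{prop14}.
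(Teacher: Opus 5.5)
Your proof is correct and follows the paper's route. The paper obtains the corollary in one line by combining Proposition~\ref{prop14} ($\nabla V_\infty(x_0)=-p^{\infty}(0)$) with \eqref{eq:opt}, and it relies implicitly on the time-shift property of extremal lifts from the proof of Theorem~\ref{theorem2}. You make that dynamic-programming step, and the choice of the continuous representative of $u^{\infty}$, explicit.
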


We next show a H\"older regularity result for the gradient $\nabla V_\infty$ under the assumption that the gradient of $F$ is locally H\"older continuous. Recall that a function $f:\mathbb{R}^n\to\mathbb{R}$ admits a locally $\kappa$-H\"older continuous gradient for some $\kappa \in (0,1)$ if, for every bounded set $U\subset \mathbb{R}^n$, the inequality
    \begin{equation*}
    \|\nabla f(x)-\nabla f(y)\| \leq C\|x-y\|^\kappa, \qquad \forall x,y\in U
    \end{equation*}
    holds for some $C>0$ depending on $U$. 
We need the next two technical lemmas. 
\begin{lemma}
    \label{lem5}
 Let $\kappa \in (0,1)$ and $f \in \mathcal{C}^1(\mathbb{R}^n,\mathbb{R})$. Then, 
 for every bounded set $U\subset \mathbb{R}^n$ there exists $C'>0$ such that 
     \begin{equation}
         \label{eq:holder-2}
       | f(x) - f(y) -  \nabla f(y)^\top (y-x) | \leq C'\|x-y\|^{1+\kappa},\qquad \forall x,y\in U,
    \end{equation}
  if and only if $f$ possesses a locally $\kappa$-H\"older continuous gradient.
\end{lemma}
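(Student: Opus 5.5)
We read the statement with the linear term $\nabla f(y)^\top (x-y)$, the first-order Taylor remainder. As printed, with $(y-x)$, the "if" direction already fails for linear $f$, so we take the sign to be a typo. Both implications are elementary. The only point needing care is that the hypothesis of each direction must be applied on a bounded set slightly larger than the given $U$. This is harmless, since each hypothesis holds for \emph{every} bounded set. Throughout, fix a bounded $U$, let $R:=\sup_{x,y\in U}\|x-y\|$, and let $\widehat U$ be a closed ball containing $U+\overline{B}(0,R)$. Note that $\widehat U$ is convex and bounded.

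\emph{($\Leftarrow$) Hölder gradient implies the Taylor estimate.} Let $C$ be the Hölder constant of $\nabla f$ on $\widehat U$. For $x,y\in U$ the segment $[y,x]$ lies in $\widehat U$. By the fundamental theorem of calculus along this segment,
\[
f(x)-f(y)-\nabla f(y)^\top(x-y)=\int_0^1\big(\nabla f(y+t(x-y))-\nabla f(y)\big)^\top (x-y)\,dt .
\]
The integrand is bounded by $C t^\kappa\|x-y\|^{1+\kappa}$. Integrating gives the estimate with $C'=C/(1+\kappa)$.

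\emph{($\Rightarrow$) Taylor estimate implies Hölder gradient.} Apply the hypothesis on $\widehat U$ with constant $C'$. Take $x\neq y$ in $U$, set $d:=x-y$ and $r:=\|d\|$, and pick any unit vector $v$. With $h:=r$ the point $x+hv$ lies in $\widehat U$. We expand $f(x+hv)-f(x)$ in two ways.
\begin{enumerate}
\item Around $x$: $f(x+hv)-f(x)=h\nabla f(x)^\top v+O(h^{1+\kappa})$.
\item Around $y$: write $f(x+hv)-f(y)=\nabla f(y)^\top(d+hv)+O((r+h)^{1+\kappa})$ and $f(x)-f(y)=\nabla f(y)^\top d+O(r^{1+\kappa})$. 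Subtracting gives $f(x+hv)-f(x)=h\nabla f(y)^\top v+O((r+h)^{1+\kappa}+r^{1+\kappa})$.
\end{enumerate}
All the $O(\cdot)$ terms carry the explicit constant $C'$. Comparing the two expansions with $h=r$ yields
\[
r\,|(\nabla f(x)-\nabla f(y))^\top v|\le C'(2+2^{1+\kappa})\,r^{1+\kappa}.
\]
Dividing by $r$ and taking the supremum over unit $v$ gives $\|\nabla f(x)-\nabla f(y)\|\le C'(2+2^{1+\kappa})\|x-y\|^\kappa$ on $U$.

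The main obstacle is the choice in the ($\Rightarrow$) direction. The probe step $hv$ must be taken of the same size as $\|x-y\|$, so that every error term is of order $r^{1+\kappa}$. The probe points must also stay in a bounded set on which the hypothesis is available, which is why we work on the enlarged ball $\widehat U$.
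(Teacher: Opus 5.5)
Your proof is correct, and it takes a different route from the paper in the sense that the paper gives no argument of its own. It cites the global result of Berger et al., where the constants do not depend on $U$, and asserts that the local version follows by ``straightforward adaptation.'' Your ($\Leftarrow$) direction is the standard integral-remainder estimate. Your ($\Rightarrow$) direction is a three-point comparison with probe step $h=\|x-y\|$, which makes every error term of order $\|x-y\|^{1+\kappa}$. You make the localization explicit by applying the hypothesis on the enlarged ball $\widehat U$, which is exactly the adaptation the paper leaves to the reader. This step is legitimate because the hypothesis is assumed on \emph{every} bounded set. What the cited work offers in the global setting is a finer quantitative comparison between the H\"older constant of the gradient and the Taylor-error constant. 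Your argument yields only the crude constant $C'(2+2^{1+\kappa})$, taken from a set larger than $U$, but nothing sharper is needed for the lemma or for its use in Proposition~\ref{prop6}. Your sign correction is also right. It matches how the lemma is applied in the proof of Proposition~\ref{prop6}, where the remainder appears as $V_\infty(y_0)-V_\infty(x_0)-\nabla V_\infty(x_0)^\top (y_0-x_0)$.
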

    
\begin{proof}
    The lemma is proved by straightforward adaptation of the arguments in \cite[Proposition 2.1 and Theorem 4.1]{berger2020quality} .\footnote {The only difference is that in~\cite{berger2020quality}  the values $C, C'$ are actually independent of $U$.}     
\end{proof}

\begin{lemma}
    \label{lem10}
    Let $\mathcal{V}$ be a bounded subset of $\mathbb{R}^2$ and $q>1$. Then there exists $C_{\mathcal{V}}>0$ such that, for every $(v_1,v_2)\in \mathcal{V}$,  the following inequality holds 

    \begin{equation}\label{eq:v1v2}
    \left|\frac{|v_1+v_2|^{q}-|v_1|^{q}}{q}-v_2\lfloor v_1\rceil^{q-1}\right| \leq C_{\mathcal{V}}|v_2|^{\min\{q,2\}}.
     \end{equation}
\end{lemma}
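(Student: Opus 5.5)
We set $g(s):=\frac{|s|^q}{q}$, which is $C^1$ with $g'(s)=\lfloor s\rceil^{q-1}$. The left-hand side of \eqref{eq:v1v2} is then $|g(v_1+v_2)-g(v_1)-g'(v_1)v_2|$. By the fundamental theorem of calculus it can be rewritten as
\[
\Big|\int_0^1 \big(g'(v_1+\theta v_2)-g'(v_1)\big)v_2\,d\theta\Big|\le |v_2|\sup_{\theta\in[0,1]}\big|\lfloor v_1+\theta v_2\rceil^{q-1}-\lfloor v_1\rceil^{q-1}\big|.
\]
This reduces the claim to a modulus-of-continuity estimate for $s\mapsto \lfloor s\rceil^{q-1}$ on a bounded interval, applied to increments of size at most $|v_2|$. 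Let $R>0$ be such that $|v_1|,|v_2|\le R$ on $\mathcal V$, so that all arguments lie in $[-2R,2R]$.

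We split into two cases.

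\textbf{Case $q\in(1,2]$.} The exponent $q-1$ lies in $(0,1]$. We use the classical inequality $|\lfloor a\rceil^{r}-\lfloor b\rceil^{r}|\le 2^{1-r}|a-b|^{r}$ for $r\in(0,1]$ and all $a,b\in\mathbb R$, which is global and needs no boundedness. To prove it for same-sign arguments, we use subadditivity of $t\mapsto t^r$. For opposite signs, we use $a^r+b^r\le 2^{1-r}(a+b)^r$ by concavity. This gives the bound $2^{2-q}|v_2|^{q-1}\cdot|v_2|=2^{2-q}|v_2|^{q}$, and $q=\min\{q,2\}$ here.

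\textbf{Case $q>2$.} Now $g'$ is $C^1$ with $g''(s)=(q-1)|s|^{q-2}$. By the mean value theorem, $g'$ is Lipschitz on $[-2R,2R]$ with constant $(q-1)(2R)^{q-2}$. Hence the supremum is at most $(q-1)(2R)^{q-2}|v_2|$, and the total bound is $C_{\mathcal V}|v_2|^2$ with $\min\{q,2\}=2$. Only this case uses boundedness of $\mathcal V$.

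The only mildly delicate point is the Hölder inequality for $\lfloor\cdot\rceil^{q-1}$ when $v_1$ and $v_1+\theta v_2$ have opposite signs. This is handled by the concavity argument above; alternatively, one can simply note that $\lfloor\cdot\rceil^{r}$ is $r$-Hölder on compacts, which suffices since $\mathcal V$ is bounded.
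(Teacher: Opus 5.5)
Your proof is correct. For $q>2$ it coincides with the paper's argument, which treats all $q\ge 2$ this way: a second application of the mean value theorem to $g'$, using $|g''|\le (q-1)(|v_1|+|v_2|)^{q-2}$ on the relevant segment.

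The two routes differ for $q\in(1,2)$.
\begin{itemize}
\item The paper exploits the homogeneity of $|\cdot|^q$. It divides by $|v_2|^q$ and sets $t=v_1/|v_2|$. The mean value theorem then reduces the claim to bounding $|\lfloor t'\rceil^{q-1}-\lfloor t\rceil^{q-1}|$ with $|t'-t|\le 1$. This is at most $\sup_{s\in\mathbb{R}}(\lfloor s+1\rceil^{q-1}-\lfloor s\rceil^{q-1})$, which is finite by continuity and because the expression tends to $0$ as $s\to\pm\infty$.
\item You instead write the Taylor remainder in integral form and invoke the global $(q-1)$-H\"older estimate $|\lfloor a\rceil^{r}-\lfloor b\rceil^{r}|\le 2^{1-r}|a-b|^{r}$. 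Your sketch of that estimate is correct: subadditivity of $t\mapsto t^r$ for equal signs, concavity for opposite signs.
\end{itemize}

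Your version treats both regimes uniformly as a modulus-of-continuity estimate for $g'$ and gives an explicit constant. The paper's scaling argument avoids the sign case analysis but leaves its constant implicit. In fact the two constants agree: the paper's supremum is attained at $s=-1/2$ and equals $2^{2-q}$, exactly your constant. You could also gain a further factor $1/q$ by integrating $\theta^{q-1}$ over $[0,1]$ instead of taking the supremum over $\theta$.
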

\begin{proof} 
We can clearly assume $v_2\neq 0$.  If $q \geq 2$, we apply the mean value theorem to the $\mathcal{C}^2$ function $t\mapsto \frac{|t|^q}{q}$ and to its derivative to obtain  
\[  \left|\frac{|v_1+v_2|^{q}-|v_1|^{q}}{q}-v_2\lfloor v_1\rceil^{q-1}\right| = \left|v_2(\lfloor v\rceil^{q-1}-\lfloor v_1\rceil^{q-1})\right| = \left|(q-1)v_2(v-v_1) | \tilde{v}|^{q-2}\right| \] 
for some $v,\tilde v$  
satisfying $|\tilde v-v_1|\leq |v-v_1|\leq |v_2|$. Then 
\[  \left|\frac{|v_1+v_2|^{q}-|v_1|^{q}}{q}-v_2\lfloor v_1\rceil^{q-1}\right| \leq (q-1)(|v_1|+|v_2|)^{q-2} |v_2|^2 \] 
which yields \eqref{eq:v1v2} with $C_\mathcal{V} = (q-1) \max_{(v_1,v_2)\in \mathcal{V}}(|v_1|+|v_2|)^{q-2}$. If $q < 2$, we divide both sides of \eqref{eq:v1v2} by $|v_2|^{q}$ and setting $t=v_1/|v_2|$, one is left to prove the inequality
   \[\left|\frac{|t+\mathrm{sign}(v_2)|^{q}-|t|^{q}}{q}-\mathrm{sign}(v_2)\lfloor t\rceil^{q-1}\right| \leq C\]
   for every $t\in \mathbb{R}$, for some $C>0$. By the mean value theorem there exists $t'$ between $t$ and $t+\mathrm{sign}(v_2)$ such that
   \begin{align*}
   \left|\frac{|t+\mathrm{sign}(v_2)|^{q}-|t|^{q}}{q}-\mathrm{sign}(v_2)\lfloor t\rceil^{q-1}\right| &= \left|\lfloor t'\rceil^{q-1}-\lfloor t\rceil^{q-1}\right|\\
    \leq \max&\left\{\lfloor t+1\rceil^{q-1}-\lfloor t\rceil^{q-1},\lfloor t\rceil^{q-1}-\lfloor t-1\rceil^{q-1}\right\}.
   \end{align*}
The right-hand side is bounded by $C = \max_{s\in \mathbb{R}} (\lfloor s+1\rceil^{q-1}-\lfloor s\rceil^{q-1})$, which is well-defined since, for $q\in (1,2)$, $s\mapsto \lfloor s+1\rceil^{q-1}-\lfloor s\rceil^{q-1}$ tends to zero as $s$ goes to $\pm \infty$. This concludes the proof of the lemma.
\end{proof}

\noindent
We finally show the following proposition.
\begin{proposition}
    \label{prop6}
Assume that $\nabla F$ is locally $\kappa$-H\"older continuous.   Then the mapping $\nabla V_\infty$ is locally $\min\lbrace q-1,\kappa\rbrace$-H\"older continuous. 
\end{proposition}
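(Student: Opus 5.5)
The natural route is Lemma~\ref{lem5} applied to $f=V_\infty$, with $\gamma:=\min\{q-1,\kappa\}$. Since $V_\infty$ is convex and $C^1$ with $\nabla V_\infty(x_0)=-p^\infty(0)$ (Proposition~\ref{prop14}), the quantity $V_\infty(y)-V_\infty(x_0)-\nabla V_\infty(x_0)^\top(y-x_0)$ is nonnegative. So only the upper bound
\[
V_\infty(x_0+h\delta_0)-V_\infty(x_0)+p^\infty(0)^\top h\delta_0\le C'|h|^{1+\gamma}
\]
is needed, uniformly for $x_0$ and $x_0+h\delta_0$ in a bounded set $U$ and $\|\delta_0\|=1$. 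The inequality in Lemma~\ref{lem5} is stated with $\nabla f(y)^\top(y-x)$, which I read as the usual first-order Taylor remainder. Since $V_\infty$ is convex, the remainder is nonnegative in both orderings of the two points, so a one-sided bound suffices by symmetry.

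To obtain the upper bound I reuse the competitor from the proof of Proposition~\ref{prop14}. Fix $K$ with $J_n+e_nK^\top$ Hurwitz and set $\delta(t)=e^{(J_n+e_nK^\top)t}\delta_0$. Then $x_K=x^\infty+h\delta$ and $u_K=u^\infty+hK^\top\delta$. The identity in \eqref{eq25-new}, applied without taking the limit, gives the exact decomposition
\[
\mathcal J_\infty(x_0+h\delta_0,u_K)-V_\infty(x_0)+h\,p^\infty(0)^\top\delta_0=\int_0^\infty \big(R_u(t)+R_F(t)\big)\,dt ,
\]
where
\[
R_u=\tfrac{|u^\infty+hK^\top\delta|^q-|u^\infty|^q}{q}-hK^\top\delta\,\lfloor u^\infty\rceil^{q-1},\qquad R_F=F(x^\infty+h\delta)-F(x^\infty)-h\nabla F(x^\infty)^\top\delta .
\]
This uses $\lfloor u^\infty\rceil^{q-1}=p^\infty_n$ together with the integration by parts already carried out there. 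The boundary term at infinity vanishes because $p^\infty(t)\to0$ and $\delta(t)\to0$ by Theorem~\ref{theorem2}.

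By Theorem~\ref{theorem2}, $x^\infty$ and $u^\infty$ are bounded by $C(\|x_0\|)$, hence uniformly over $x_0\in U$, and $\|\delta(t)\|\le Me^{-\mu t}$. So all arguments stay in a bounded set, and:
\begin{itemize}
\item Lemma~\ref{lem5} applied to $F$ gives $|R_F(t)|\le C|h|^{1+\kappa}e^{-(1+\kappa)\mu t}$.
\item Lemma~\ref{lem10} with $v_1=u^\infty(t)$ and $v_2=hK^\top\delta(t)$ gives $|R_u(t)|\le C|h|^{\min\{q,2\}}e^{-\min\{q,2\}\mu t}$.
\end{itemize}
Integrating over $[0,\infty)$, the exponential decay makes both integrals finite. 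This yields the bound $C'(|h|^{1+\kappa}+|h|^{\min\{q,2\}})\le C''|h|^{1+\gamma}$ for $|h|$ bounded, since $\min\{q,2\}=1+\min\{q-1,1\}$ and $\kappa<1$. Combined with $V_\infty(x_0+h\delta_0)\le\mathcal J_\infty(x_0+h\delta_0,u_K)$, this is the desired upper bound.

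The main point requiring care is uniformity of all constants over $x_0\in U$. This comes from the $\mathcal K_\infty$ bound $C(\|x_0\|)$ in Theorem~\ref{theorem2}, which makes the bounded set $\mathcal V$ in Lemma~\ref{lem10} and the bounded set in Lemma~\ref{lem5} for $F$ depend only on $U$. The remaining detail is justifying the exact decomposition, not just its limit. Here the integrability of $p^{\infty\top}\dot\delta+\dot p^{\infty\top}\delta$ on $[0,\infty)$ follows from the boundedness of $p^\infty$ and $\nabla F(x^\infty)$ together with the exponential decay of $\delta$. With these in place, Lemma~\ref{lem5} yields the local $\gamma$-Hölder continuity of $\nabla V_\infty$.
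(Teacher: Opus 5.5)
Your proposal is correct and follows the paper's proof essentially step for step: the same Hurwitz-feedback competitor $u_K$, the exact first-order identity behind \eqref{eq25-new}, the remainder bounds via Lemma~\ref{lem5} applied to $F$ and via Lemma~\ref{lem10}, integration using the exponential decay of $\delta$, nonnegativity from convexity, and finally Lemma~\ref{lem5} applied to $V_\infty$. Your extra care about the exactness of the decomposition, the vanishing boundary term at infinity, and the sign typo in Lemma~\ref{lem5} just makes explicit what the paper leaves implicit.
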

\begin{proof}
 
 Let $x_0,y_0 \in B_R(0)$ be the ball centered at zero of radius $R>0$.  
 Similarly to the proof of Proposition~\ref{prop14}, we take $K\in\mathbb{R}^n$ such that $J_n+e_nK^\top$ is Hurwitz, and let $t\mapsto x_K(t):=x^{\infty}(t)+\delta(t)$ of~\eqref{n-integrator}  associated with the control $u_K:=u^{\infty}+K^\top(x_K-x^{\infty})$, where $\delta(t)= e^{(J_n+e_nK^\top)t}(y_0-x_0)$ for $t\geq 0$. From~\eqref{eq25-new} we have
    \[\int_0^\infty  \left(\lfloor u^{\infty}(t) \rceil^{q-1}K + \nabla F(x^{\infty}(t))\right)^\top\delta(t)\,dt= \nabla V_\infty(x_0)^\top (y_0-x_0),\]
  
   hence it follows from~\eqref{eq24-new} that 
    \begin{align}
     V_\infty(y_0)-V_\infty(x_0)&-\nabla V_\infty(x_0)^\top (y_0-x_0)  \nonumber\\
        &\leq \int_0^\infty
        \Big(\frac{|u_K(t)|^{q}-|u^{\infty}(t)|^{q}}{q} - K^\top \delta(t)\lfloor u^{\infty}(t)\rceil^{q-1})\Big)\, dt
     \nonumber \\
&+\int_0^\infty \Big(F(x_K(t))-F(x^{\infty}(t))-\nabla F(x^{\infty}(t))^\top\delta(t)\Big)\, dt.
     \label{eq:50}
    \end{align}
    
    By Theorem~\ref{theorem2} and the definition of $\delta(\cdot)$ one has
    \[\|(x^{\infty},u^{\infty})\|_\infty \leq C(R), \qquad \|(x_K,u_K)\|_\infty \leq C(R) + 2R (1+\|K\|)\max_{t\geq 0} \|e^{(J_n+e_nK^\top)t}\|.\]
    Using now the local $\kappa$-H\"older continuity of $\nabla F$ and Lemma~\ref{lem10}, one can find a constant $C_1 > 0$ (depending on $R$) such that, for every $t\geq 0$,
    \begin{align}
       |F(x_K(t))-F(x^{\infty}(t))-\nabla F(x^{\infty}(t))^\top\delta(t)| \leq C_1 \|\delta(t)\|^{1+\kappa} \label{eq:51},
    \end{align} 
    \begin{align}\label{eq:52}
 \left|\frac{|u_K(t)|^{q}-|u^{\infty}(t)|^{q}}{q}- K^\top\delta(t)\lfloor u^{\infty}(t)\rceil^{q-1}\right|\leq C_1\|\delta(t)\|^{\min\{2,q\}},
    \end{align}
 
 Using \eqref{eq:50}, \eqref{eq:51} and \eqref{eq:52}, and the exponential convergence of $e^{(J_n+e_nK^\top)t}$ to zero, one deduces   
the existence of $C_2>0$ depending on $R$ such that
\begin{align*}
V_\infty(y_0)&-V_\infty(x_0)-\nabla V_\infty(x_0)^\top (y_0-x_0)\\
&\leq C_1\int_0^{\infty}
\Big(\|\delta(t)\|^
{1+\kappa}+\|\delta(t)\|^{\min\{2,q\}}\Big)\, dt
\leq C_2|y_0-x_0|^{\min\lbrace q,1+\kappa\rbrace}.
\end{align*}

By \eqref{eq:subD}, the left-hand side in the above inequality is nonnegative. We then have that~\eqref{eq:holder-2} holds true for $U=B_R(0)$,  hence for every bounded set $U$. By applying  Lemma~\ref{lem5}, we conclude that $\nabla V_q$ is locally $\min\lbrace q-1,\kappa\rbrace$-H\"older continuous.
\end{proof}

\section{Hamilton-Jacobi-Bellman equation and consequences}
\label{sec:HJB}
In this section, we provide a stationary HJB equation associated with $(OCP)_\infty$, and we prove that this partial differential equation admits a unique solution among positive definite functions $W:\mathbb{R}^n\to\mathbb{R}_{\geq 0}$ of class $\mathcal{C}^1$. 
\begin{theorem}
\label{prop11}
\hspace{-1mm}The value function $V_\infty$ of $(OCP)_\infty$ satisfies the HJB equation
\begin{equation}
\nabla V_\infty(x)^\top J_nx -\frac{q-1}{q}\left|\frac{\partial V_\infty}{\partial x_n}(x)\right|^\frac{q}{q-1} + F(x) = 0,\qquad \forall x \in \mathbb{R}^n. \label{eq:HJB}
\end{equation}
\end{theorem}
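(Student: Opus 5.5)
The plan is to differentiate $V_\infty$ along the optimal trajectory and to use the dynamic programming principle. Fix $x_0\in\mathbb{R}^n$ and let $(x^{\infty},p^{\infty})$ be the extremal lift from Theorem~\ref{theorem2}, with $u^{\infty}=\lfloor p^{\infty}_n\rceil^{\frac{1}{q-1}}$.

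\textbf{Step 1: dynamic programming along the optimal trajectory.} The tail of an optimal control is optimal from the intermediate point (this is already used in the proof of Theorem~\ref{theorem2}, Item 2). Hence
\[
V_\infty(x^{\infty}(t))=\int_t^\infty \mathcal{L}(x^{\infty}(s),u^{\infty}(s))\,ds .
\]
The integrand is continuous, since $x^{\infty}$ and $p^{\infty}$ are continuous. The right-hand side is therefore $C^1$ in $t$, with derivative $-\mathcal{L}(x^{\infty}(t),u^{\infty}(t))$.

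\textbf{Step 2: chain rule at $t=0$.} By Proposition~\ref{prop14}, $V_\infty$ is $C^1$ and $x^{\infty}$ is $C^1$. Differentiating at $t=0$ gives
\[
\nabla V_\infty(x_0)^\top\big(J_nx_0+e_nu^{\infty}(0)\big)=-\frac{|u^{\infty}(0)|^q}{q}-F(x_0).
\]

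\textbf{Step 3: identify $u^{\infty}(0)$ through the gradient.} Proposition~\ref{prop14} gives $\nabla V_\infty(x_0)=-p^{\infty}(0)$, so $u^{\infty}(0)=-\lfloor \partial_{x_n}V_\infty(x_0)\rceil^{\frac{1}{q-1}}$, as in Corollary~\ref{cor-feedback}. Set $a:=\partial_{x_n}V_\infty(x_0)$. Then
\[
\frac{\partial V_\infty}{\partial x_n}(x_0)\,u^{\infty}(0)=-|a|^{\frac{q}{q-1}},\qquad \frac{|u^{\infty}(0)|^q}{q}=\frac{1}{q}|a|^{\frac{q}{q-1}}.
\]
Substituting these into the identity of Step 2 yields
\[
\nabla V_\infty(x_0)^\top J_nx_0-|a|^{\frac{q}{q-1}}+\frac1q|a|^{\frac{q}{q-1}}+F(x_0)=0,
\]
which is exactly \eqref{eq:HJB}. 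Since $x_0$ was arbitrary, the equation holds on all of $\mathbb{R}^n$.

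\textbf{Main obstacle.} The delicate point is justifying the pointwise derivative at $t=0$, rather than only almost everywhere. This needs continuity of $u^{\infty}$, which follows from the continuity of $p^{\infty}$, and the $C^1$ regularity of $V_\infty$. With both in hand, the chain rule applies classically at every $t$, and in particular at $t=0$.

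As an alternative route one could derive the Hamiltonian-conservation identity $H(x^{\infty},u^{\infty},p^{\infty},1)\equiv 0$: the Hamiltonian is constant along extremals of an autonomous problem and tends to $0$ as $t\to\infty$ by Theorem~\ref{theorem2}, Item 2. Combined with $\nabla V_\infty(x_0)=-p^{\infty}(0)$, this gives the same equation. However, constancy of $H$ requires care when $\nabla F$ is only continuous, so the dynamic programming argument above is preferable.
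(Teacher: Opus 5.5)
Your proposal is correct and follows essentially the same route as the paper. The paper also expresses $V_\infty(x^\infty(t))$ as the tail integral of $\mathcal{L}$ via dynamic programming, differentiates along the trajectory using $V_\infty\in\mathcal{C}^1$ (Proposition~\ref{prop14}), and substitutes the feedback $u^\infty=-\lfloor \partial_{x_n}V_\infty\rceil^{\frac{1}{q-1}}$ from Corollary~\ref{cor-feedback}; your point that continuity of $u^\infty$, inherited from $p^\infty$, makes the identity hold at every point (in particular at $t=0$) spells out a detail the paper leaves implicit.
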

\begin{proof}
Let $x^\infty$ be an optimal trajectory for  $(OCP)_\infty$, and $u^\infty$ the corresponding control input. 
Since, for every $t\geq 0$, the curve  $x^{\infty}(t+\cdot)$ corresponds to the solution of $(OCP)_\infty$ starting from $x^{\infty}(t)$, we have 
\begin{equation*}
V_\infty(x^{\infty}(t)) = \int_t^\infty\mathcal{L}(x^\infty(s),u^\infty(s))\,ds
\end{equation*} 
for every $t\geq 0$. Then, the directional derivative of $V_\infty$ along $x^\infty$ satisfies
\[
\dot{V}_{\infty}(x^{\infty}(s)) =  
- \mathcal{L}(x^\infty(s),u^\infty(s)).
\]
By the chain rule and using Corollary~\ref{cor-feedback}, Equation~\eqref{eq:HJB} follows.
\end{proof}
We next derive comparison results regarding sub- and super-solutions of \eqref{eq:HJB}.
\begin{proposition}
    \label{prop13}
    Let $W$ be a positive definite function in $\mathcal{C}^1(\mathbb{R}^n,\mathbb{R}_{\geq 0})$.
    \begin{enumerate}
    \setlength{\itemsep}{5pt}
        \item If $W$ is a sub-solution of \eqref{eq:HJB}, i.e., $W$ verifies
        \[
        \nabla W(x)^\top J_nx -\frac{q-1}{q}\left|\frac{\partial W}{\partial x_n}(x)\right|^\frac{q}{q-1} \leq -F(x), \qquad  \forall x \in \mathbb{R}^n,
        \]
        \noindent
        then $V_\infty(x) \leq W(x)$ for $x \in \mathbb{R}^n$. 
        \item If $W$ is a super-solution of \eqref{eq:HJB}, i.e., $W$ verifies
        \[
        \nabla W(x)^\top J_nx -\frac{q-1}{q}\left|\frac{\partial W}{\partial x_n}(x)\right|^\frac{q}{q-1} \geq -F(x), \qquad \forall x \in \mathbb{R}^n,
        \]
        then $V_\infty(x) \geq W(x)$ for $x \in \mathbb{R}^n$.
    \end{enumerate}
\end{proposition}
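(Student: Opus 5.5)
The plan is a verification argument. Both items rest on the pointwise identity, obtained from Young's inequality $ab \le \frac{|a|^q}{q} + \frac{q-1}{q}|b|^{\frac{q}{q-1}}$:
\[
\min_{u\in\mathbb{R}}\Big(\nabla W(x)^\top (J_nx+e_nu)+\frac{|u|^q}{q}\Big)=\nabla W(x)^\top J_nx-\frac{q-1}{q}\Big|\frac{\partial W}{\partial x_n}(x)\Big|^{\frac{q}{q-1}}.
\]
The minimum is attained at $u=-\lfloor \partial W/\partial x_n(x)\rceil^{\frac{1}{q-1}}$. In each item I integrate $\frac{d}{dt}W(x(t))$ along a suitable trajectory. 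This is legitimate because $W\in\mathcal{C}^1$ and trajectories of \eqref{n-integrator} with $u\in L^q_{loc}$ are absolutely continuous, so the chain rule holds a.e.

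\textbf{Item 1 (sub-solution gives $V_\infty\le W$).} Consider the feedback $u_W(x):=-\lfloor \partial W/\partial x_n(x)\rceil^{\frac{1}{q-1}}$. It is continuous, so by Peano's theorem the closed loop $\dot x=J_nx+e_nu_W(x)$ has a solution from $x_0$ on a maximal interval $[0,T^*)$. Along it, the identity above and the sub-solution inequality give
\[
\frac{d}{dt}W(x(t))=\nabla W^\top J_nx-\frac{q-1}{q}\Big|\frac{\partial W}{\partial x_n}\Big|^{\frac{q}{q-1}}-\frac{|u_W|^q}{q}\le -\mathcal{L}(x(t),u_W(x(t))).
\]
Integrating yields $W(x(T))+\int_0^T\mathcal{L}\,dt\le W(x_0)$ for all $T<T^*$. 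Since $W\ge0$, the control satisfies $\|u_W(x(\cdot))\|_{L^q([0,T))}^q\le qW(x_0)$. The variation of constants formula with H\"older's inequality, as in \eqref{bound-xT}, then bounds $x$ on every bounded interval, so $T^*=\infty$. Letting $T\to\infty$ gives $\mathcal{J}_\infty(x_0,u_W(x(\cdot)))\le W(x_0)$, hence $V_\infty(x_0)\le W(x_0)$.

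\textbf{Item 2 (super-solution gives $V_\infty\ge W$).} Take the optimal pair $(x^\infty,u^\infty)$ from $x_0$ given by Theorem~\ref{theorem1} and Theorem~\ref{theorem2}. For every $u$, the identity above (the expression with $u$ is at least the minimum) and the super-solution inequality give $\nabla W(x)^\top(J_nx+e_nu)+\frac{|u|^q}{q}\ge -F(x)$. Applying this along $x^\infty$ gives $\frac{d}{dt}W(x^\infty(t))\ge-\mathcal{L}(x^\infty(t),u^\infty(t))$ a.e. Integrating on $[0,T]$ yields
\[
W(x_0)\le W(x^\infty(T))+\int_0^T\mathcal{L}(x^\infty,u^\infty)\,dt .
\]
By Item 2 of Theorem~\ref{theorem2}, $x^\infty(T)\to0$. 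Since $W$ is continuous with $W(0)=0$, letting $T\to\infty$ gives $W(x_0)\le V_\infty(x_0)$.

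The only delicate point is global existence of the closed-loop solution in Item 1. The feedback is merely continuous and $W$ need not be radially unbounded, so sublevel sets of $W$ cannot be used. The $L^q$ bound on the control obtained from the integrated inequality replaces that argument. Uniqueness of closed-loop solutions is not needed, since any solution suffices.
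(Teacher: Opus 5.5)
Your proposal is correct. Item~1 is essentially the paper's argument: the same feedback $u_W$, the same integrated inequality, and the same use of the $L^q$ bound with variation of constants to exclude blow-up.

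Item~2 follows a genuinely different route. The paper differentiates $V_\infty-W$ along the optimal closed-loop flow. This requires $V_\infty\in\mathcal{C}^1$ (Proposition~\ref{prop14}), the HJB equation satisfied by $V_\infty$ (Theorem~\ref{prop11}), and the feedback form of the optimal control (Corollary~\ref{cor-feedback}). Young's inequality then shows that $V_\infty-W$ is nonincreasing along optimal trajectories.

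You instead combine the super-solution inequality with the pointwise minimization identity to get $\frac{d}{dt}W(x(t))\geq-\mathcal{L}(x(t),u(t))$ along \emph{every} admissible pair, and then integrate along the optimal one. This is a classical verification argument. It uses no regularity of $V_\infty$ and no HJB equation for it, only the existence of a minimizer and $W(x^\infty(T))\to 0$.

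Even that can be weakened. It suffices that $\liminf_{T\to\infty}W(x^\infty(T))=0$, which follows from $\int_0^\infty F(x^\infty)\,dt<\infty$ and the convexity and positive definiteness of $F$. Hence you could run the argument with an arbitrary finite-cost control and then take the infimum, bypassing Theorems~\ref{theorem1} and~\ref{theorem2} altogether.

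The paper's version buys the slightly stronger monotonicity of $V_\infty-W$ along optimal trajectories. Yours is more economical and carries over verbatim to the bounded-control problem, using $u\,\partial W/\partial x_n\geq-|\partial W/\partial x_n|$ for $|u|\leq 1$. There it would allow one to drop the hypothesis $V_\infty^{lim}\in\mathcal{C}^1$ in Item~2 of Proposition~\ref{prop13-limit}.
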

\begin{proof}
Let $W$ be as in the statement of the proposition.
 Suppose that $W$ is a sub-solution of \eqref{eq:HJB} and let the feedback control $u_W(x) = -\left\lfloor \frac{\partial W}{\partial x_n}(x) \right\rceil^\frac{1}{q-1}$ for $x\in\mathbb{R}^n$. Since $x\mapsto u_W(x)$ is continuous, \eqref{n-integrator} closed by $u_W$ admits solutions $x_W$ for every initial condition $x_0\in \mathbb{R}^n$ defined on some non trivial time interval of the form $I:=[0,T_*)$ with $0<T_*\leq \infty$. 

 For any $t\in I$, one has 
    \begin{align*}
        W(x_W(t)) &= W(x_0) + \int_0^t  \nabla W(x_W(s))^\top (J_n x_W(s)+e_nu_W(x_W(s))) \, ds  \\
        &= W(x_0) + \int_0^t ( \nabla W(x_W(s))^\top J_n x_W(s) + \frac{\partial W}{\partial x_n}(x_W(s)) u_W(x_W(s)))\, ds \\
        &= W(x_0) + \int_0^t \left( \nabla W(x_W(s))^\top J_nx_W(s) -  \left| \frac{\partial W}{\partial x_n}(x_W(s)) \right|^\frac{q}{q-1} \right)\, ds \\
        &\leq W(x_0) - \int_0^t \left( \frac{|u_W(x_W(s))|^{q}}{q} + F(x_W(s))\right)\, ds.
    \end{align*}
    One gets that for every $t\in I$, 
    \begin{equation}\label{eq:bound W}
    \int_0^t \left(\frac{|u_W(x_W(s))|^{q}}{q} +F(x_W(s))\right)\, ds \leq W(x_0).
    \end{equation}
In particular, $u_W$ belongs to $L^q([0,T_*),\mathbb{R})$.   Now, using the variation of constants formula, we express the value $x_W(t)$ for $t\in [0, T_*)$ as 
\[
    x_W(t)=e^{J_n t}x_0+\int_0^t
e^{J_n(t-\tau)}e_n u_W(x_W(\tau))\, d\tau,
\]
so that, by H\"older's inequality,
$\|x_W(t)\|  \leq e^t \left(\|x_0\|+t^{\frac{q-1}{q}}\|u_W\circ x_W\|_{L^q}\right)$, proving that $T_*=\infty$.
   As $t\to\infty$ in \eqref{eq:bound W} and by definition of $V_\infty$, we get that 
    \begin{align*}
        V_\infty(x_0) \leq \int_0^\infty \left(\frac{|u_W(x_W(s))|^{q}}{q} + F(x_W(s))\right)\, ds \leq W(x_0)
    \end{align*}
    and, by arbitrariness of $x_0$, we conclude that $V_\infty \leq W$.
    
    As for Item $2$, suppose that $W$ is a super-solution of \eqref{eq:HJB}.
    Considering the time derivative of $V_\infty - W$ along the flow of~\eqref{n-integrator} with  $u^{\infty}(x)$ yields 
    \begin{align}
      \dot V_\infty(x)-\dot W(x)
      &= \Big(\nabla V_\infty(x)-\nabla W(x)\Big)^\top \Big(J_n x - e_n\left\lfloor \frac{\partial V_\infty}{\partial x_n}(x) \right\rceil^\frac{1}{q-1} \Big) \nonumber\\
        &=\nabla V_\infty(x)^\top \Big(J_n x-e_n\left\lfloor \frac{\partial V_\infty}{\partial x_n}(x) \right\rceil^\frac{1}{q-1}\Big) \nonumber\\
        &-\nabla W(x)^\top \Big(J_nx-e_n\left\lfloor\frac{\partial W}{\partial x_n}(x)\right\rceil^\frac{1}{q-1} \Big) \nonumber \\
        &+\frac{\partial W}{\partial x_n}(x)
        \Big(\left\lfloor \frac{\partial V_\infty}{\partial x_n}(x)\right\rceil^\frac{1}{q-1}-\left\lfloor \frac{\partial W}{\partial x_n}(x)\right\rceil^\frac{1}{q-1}\Big) \nonumber \\
        &\leq -\frac{1}{q}\left|\frac{\partial V_\infty}{\partial x_n}(x)\right|^\frac{q}{q-1}-\frac{q-1}{q}\left|\frac{\partial W}{\partial x_n}(x)\right|^\frac{q}{q-1} + \frac{\partial W}{\partial x_n}(x)\left\lfloor \frac{\partial V_\infty}{\partial x_n}(x)\right\rceil^\frac{1}{q-1}. \label{eq35}
    \end{align}
    Setting $a:=\left\lfloor \frac{\partial V_\infty}{\partial x_n}(x)\right\rceil^\frac{1}{q-1}$ and 
    $b:=\frac{\partial W}{\partial x_n}(x)$, the right-hand side of the last inequality in \eqref{eq35} simply reads
    $-\frac{1}{q}|a|^{q} -\frac{q-1}{q}|b|^{\frac{q}{q-1}}+ab$, which is nonpositive by  
    Young's inequality. 
    One deduces that, along the optimal trajectory $x^{\infty}$ starting at $x_0\in\mathbb{R}^n$,
    \[
    V_\infty(x^{\infty}(t))-W(x^{\infty}(t))\leq  V_\infty(x_0)-W(x_0),\qquad \forall t\geq 0.
    \]
    By Item~2 of Theorem~\ref{theorem2} and passing to the limit as $t$ goes to infinity, 
    one deduces that $V_\infty(x_0) \geq W(x_0)$ so that, by arbitrariness of $x_0$, the proposition is proved.
\end{proof}
\noindent
From Proposition~\ref{prop13}, we get the following corollary.
\begin{corollary}
    \label{cor6}
    The value function $V_\infty$ is the unique solution of \eqref{eq:HJB} among positive definite functions in $\mathcal{C}^1(\mathbb{R}^n,\mathbb{R})$. Conversely, assume that $W\in \mathcal{C}^1(\mathbb{R}^n,\mathbb{R})$ is a positive definite mapping
    such that \[F_W(x) := \frac{q-1}{q}\left|\frac{\partial W}{\partial x_n}(x)\right|^\frac{q}{q-1}- \nabla W(x)^\top J_nx \]
    satisfies $(A1)$. Then  $W$ is the value function associated with $(OCP)_{\infty}$ with $F=F_W$, and the corresponding optimal $u_W$ is given by $u_W(x)=-\left\lfloor\frac{\partial W}{\partial x_n}(x)\right\rceil^\frac{1}{q-1}$.
    
\end{corollary}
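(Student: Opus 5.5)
The statement has two parts, and both follow from Proposition~\ref{prop13} together with Theorem~\ref{prop11}.

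\emph{Uniqueness.} By Theorem~\ref{theorem1} and Proposition~\ref{prop14}, $V_\infty$ is positive definite and of class $\mathcal{C}^1$, and by Theorem~\ref{prop11} it solves \eqref{eq:HJB}; so $V_\infty$ belongs to the class considered. Now let $W\in\mathcal{C}^1(\mathbb{R}^n,\mathbb{R})$ be positive definite and solve \eqref{eq:HJB}. Positive definiteness gives $W\geq 0$, so $W\in\mathcal{C}^1(\mathbb{R}^n,\mathbb{R}_{\geq 0})$ and Proposition~\ref{prop13} applies to it. Since equality holds in \eqref{eq:HJB}, $W$ is simultaneously a sub-solution and a super-solution. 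Item~1 then gives $V_\infty\leq W$ and Item~2 gives $V_\infty\geq W$, hence $W=V_\infty$.

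\emph{Converse.} Let $W$ be as in the statement and take $F:=F_W$. By assumption $F_W$ satisfies (A1), so all the earlier results hold for the problem $(OCP)_\infty$ with instantaneous cost $\frac{|u|^q}{q}+F_W(x)$. Rearranging the definition of $F_W$ gives
\[
\nabla W(x)^\top J_nx-\frac{q-1}{q}\left|\frac{\partial W}{\partial x_n}(x)\right|^{\frac{q}{q-1}}+F_W(x)=0,\qquad \forall x\in\mathbb{R}^n .
\]
This is exactly \eqref{eq:HJB} with $F=F_W$. The first part, applied to this problem, then shows that $W$ coincides with its value function. Corollary~\ref{cor-feedback} gives the optimal control $u_\infty(x)=-\lfloor\partial_{x_n}V_\infty(x)\rceil^{1/(q-1)}$, and substituting $V_\infty=W$ yields the stated $u_W$.

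The only point requiring care is checking that the hypotheses of Proposition~\ref{prop13} are met. Nonnegativity of $W$ follows from positive definiteness. Moreover, Proposition~\ref{prop13} itself relies only on Theorem~\ref{theorem2}, Corollary~\ref{lemma-boundkinfty} and Corollary~\ref{cor-feedback}, and these hold for any $F$ satisfying (A1). Hence the argument transfers to $F=F_W$ without change, and nothing beyond this bookkeeping is needed.
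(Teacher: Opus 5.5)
Your proposal is correct and matches the paper's argument: an exact solution of \eqref{eq:HJB} is both a sub- and a super-solution, so Proposition~\ref{prop13} forces $W=V_\infty$, and the converse is this uniqueness applied with $F=F_W$, with the feedback read off from Corollary~\ref{cor-feedback}. One small bookkeeping correction: the proof of Proposition~\ref{prop13} also uses Proposition~\ref{prop14} and the HJB identity of Theorem~\ref{prop11} for $V_\infty$, but these are likewise proved under (A1) alone, so your transfer to $F=F_W$ is unaffected.
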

\begin{proof}
        Let $W \in \mathcal{C}^1(\mathbb{R}^n,\mathbb{R})$ be a positive definite function solution of \eqref{eq:HJB}, then $W$ is a super-solution of \eqref{eq:HJB} and also a sub-solution of \eqref{eq:HJB}. Then, by using Proposition~\ref{prop13}, one immediately deduces that $V_\infty(x) = W(x)$ for every $x \in \mathbb{R}^n$. The second part of the corollary is an immediate consequence of the first one.
\end{proof}

\section{Finite-time stabilization}
\label{sec:FTS}

We will prove below that, under the homogeneity assumption~\ref{ass2} on the function $F$ and in addition to the hypothesis~\ref{ass1} introduced above, all solutions of $(OCP)_\infty$ reach the origin in finite time. 

\begin{enumerate}[label= (A\arabic*), leftmargin=1cm]
\setcounter{enumi}{1}
    \item \label{ass2}  Let $d,\mu \in \mathbb{R}_{>0}$ such that $\mu < \frac{d(q-1)}{qn}$ and define $r\in \mathbb{R}_{>0}^n$ by
$r_i =\frac{d}{q}+(n-i+1)\mu$ for $i=1,\dots,n$. 
      Then $F$ is   homogeneous of degree $d$ with respect to the family $(\delta^r_\epsilon)_{\epsilon>0}$, i.e., 
    $F(\delta_\epsilon^r(x)) = \epsilon^dF(x)$ for
    every $\epsilon > 0$ and $x \in \mathbb{R}^n$.
\end{enumerate}
We first have that 
applying the dilation $\delta_\epsilon^r$ to a solution of $(OCP)_\infty^{lim}$ preserves its 
optimality, up to a time rescaling.
\begin{lemma}
\label{lem-hom-opt-0}
Assume that $F$ satisfies \ref{ass2}, in addition to \ref{ass1}. Let $ x^\infty(\cdot)$ be an optimal trajectory for $(OCP)_\infty$. Then, for every $\epsilon>0$, the map $x^\infty_\epsilon(t) := \delta^r_\epsilon (x^\infty(\epsilon^{-\mu}t))$ also corresponds to  an optimal trajectory for $(OCP)_\infty$.  Moreover $V_\infty$ is homogeneous of degree $d +\mu$ with respect to  $(\delta^r_\epsilon)_{\epsilon>0}$. 
\end{lemma}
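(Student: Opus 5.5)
The plan is a scaling argument: the map $x(\cdot)\mapsto \delta^r_\epsilon(x(\epsilon^{-\mu}\cdot))$ sends admissible trajectories to admissible trajectories and multiplies the cost by exactly $\epsilon^{d+\mu}$. Optimality and homogeneity of $V_\infty$ then follow from the uniqueness of the minimizer in Theorem~\ref{theorem1}.

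First I check that the dynamics are preserved. Take any admissible pair $(x,u)$ of~\eqref{n-integrator} with $x(0)=x_0$. Set $x_\epsilon(t):=\delta^r_\epsilon(x(\epsilon^{-\mu}t))$ and $u_\epsilon(t):=\epsilon^{r_n+\mu}u(\epsilon^{-\mu}t)$. For $i<n$ we have $\dot x_{\epsilon,i}(t)=\epsilon^{r_i-\mu}x_{i+1}(\epsilon^{-\mu}t)=\epsilon^{r_i-\mu-r_{i+1}}x_{\epsilon,i+1}(t)$. The relation $r_i=r_{i+1}+\mu$ gives exponent zero, so the chain structure is kept. For the last component, $\dot x_{\epsilon,n}(t)=\epsilon^{r_n-\mu}u(\epsilon^{-\mu}t)$, which equals $u_\epsilon(t)$ once the input scaling is fixed correctly. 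The exact exponent of $u_\epsilon$ has to be tracked carefully here: it should be $r_n-\mu=\frac dq$, so I take $u_\epsilon(t)=\epsilon^{d/q}u(\epsilon^{-\mu}t)$ rather than the tentative $\epsilon^{r_n+\mu}$ above. Hence $(x_\epsilon,u_\epsilon)$ is admissible and starts at $\delta^r_\epsilon(x_0)$. The map is a bijection of $L^q$ controls, with inverse given by $\epsilon^{-1}$.

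Next I compute the cost. Using~\ref{ass2}, $|u_\epsilon(t)|^q=\epsilon^{d}|u(\epsilon^{-\mu}t)|^q$ and $F(x_\epsilon(t))=\epsilon^dF(x(\epsilon^{-\mu}t))$. The change of variables $s=\epsilon^{-\mu}t$ then gives
\[
\mathcal J_\infty(\delta^r_\epsilon(x_0),u_\epsilon)=\epsilon^{d+\mu}\mathcal J_\infty(x_0,u).
\]
This is exactly where the choice $r_n=\frac dq+\mu$ is needed: it makes the control term and the state term scale with the same power. Since the control map is a bijection, taking the infimum over $u$ yields $V_\infty(\delta^r_\epsilon x_0)=\epsilon^{d+\mu}V_\infty(x_0)$, which is the claimed homogeneity of degree $d+\mu$.

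Finally, if $u^\infty$ is optimal from $x_0$, then $u^\infty_\epsilon$ attains $\epsilon^{d+\mu}V_\infty(x_0)=V_\infty(\delta^r_\epsilon x_0)$. So it is a minimizer, and by the uniqueness in Theorem~\ref{theorem1} it is the optimal control, with trajectory $x^\infty_\epsilon$.

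I expect no real obstacle. The only delicate point is the bookkeeping of exponents, in particular checking that $r_{i}-r_{i+1}=\mu$ and $q r_n - q\mu = d$, so that both the dynamics and the cost scale consistently.
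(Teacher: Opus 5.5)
Your proposal is correct and follows essentially the same route as the paper. You rescale the control as $u_\epsilon(t)=\epsilon^{d/q}u(\epsilon^{-\mu}t)$, note that $u\mapsto u_\epsilon$ is a bijection of $L^q$ sending trajectories from $x_0$ to trajectories from $\delta^r_\epsilon(x_0)$, and use \ref{ass2} with the change of variables $s=\epsilon^{-\mu}t$ to get $\mathcal{J}_\infty(\delta^r_\epsilon(x_0),u_\epsilon)=\epsilon^{d+\mu}\mathcal{J}_\infty(x_0,u)$, which yields both the optimality of $x^\infty_\epsilon$ and the homogeneity of $V_\infty$. In a final write-up, simply delete the tentative scaling $\epsilon^{r_n+\mu}$, since only $\epsilon^{r_n-\mu}=\epsilon^{d/q}$ is compatible with the dynamics.
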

\begin{proof}
By a straightforward computation, given $\epsilon>0$ and a trajectory $x$ of~\eqref{n-integrator} with associated control $u$, then $x_\epsilon(t):= \delta^r_\epsilon (x(\epsilon^{-\mu}t))$ is also a trajectory of the system associated with the control $u_\epsilon(t):=\epsilon^{d/q} u(\epsilon^{-\mu}t)$. In particular, the map $u\mapsto u_\epsilon$ is a bijection from $L^q(\mathbb{R}_{\geq 0},\mathbb{R})$ to itself and, using the homogeneity assumption on $F$ with a time rescaling, one  gets $\mathcal{J}_\infty(\delta^r_\epsilon (x(0)),u_\epsilon) = \epsilon^{d+\mu} \mathcal{J}_\infty(x(0),u)$. It follows that $x$ is the optimal trajectory with initial condition $x(0)$ if and only if $x_\epsilon$ is the optimal trajectory with initial condition $\delta_\epsilon^r(x(0))$, and $V_\infty(\delta_\epsilon^r(x(0))) = \epsilon^{d +\mu}V_\infty(x(0))$ for every $\epsilon>0$. This concludes the proof of the lemma.
\end{proof}

\begin{remark}
\label{remark-control}
The optimal control for $(OCP)_\infty$ which, according to Corollary~\ref{cor-feedback}, can be expressed in feedback form as ${u}^\infty(x) = - \left\lfloor\frac{\partial V_\infty}{\partial x_n} (x)\right\rceil^{\frac{1}{q-1}}$ 
is homogeneous of degree $\frac{d+\mu-r_n}{q-1} = \frac{d}{q}$ with respect to $(\delta^{(r,s)}_\epsilon)_{\epsilon>0}$ by Lemma~\ref{lem-hom-opt-0} and Corollary~\ref{cor1}.  
\end{remark}
\begin{remark}\label{rem:HF-homogeneity}
It is noteworthy that the Hamiltonian vector field 
\[\mathcal{F}: (x,y) \in \mathbb{R}^n\times\mathbb{R}^n \mapsto \begin{pmatrix}
    J_n x+e_n\lfloor y_n\rceil^\frac{1}{q-1} \\
    -J_n^\top y +\nabla F(x)
\end{pmatrix},\]
corresponding to the dynamics~\eqref{eq-Ham2}, is homogeneous with respect to a family of dilations defined 
as follows. Let $s=(s_1,\cdots,s_n)$ with 
$s_i=  \frac{d(q-1)}{q}-(n-i)\mu$ for $1\leq i\leq n$. 
Then $\mathcal{F}$ is homogeneous of degree $-\mu$ with respect to the family of dilations  
    $(\delta^{(r,s)}_\epsilon)_{\epsilon > 0}$, where $\delta^{(r,s)}_\epsilon(x,y):= (\delta^r_\epsilon(x), \delta^s_\epsilon(y))$ is defined on $\mathbb{R}^n\times\mathbb{R}^n$.
\end{remark}

We consider below the {\it settling time map $x_0\mapsto T_{x_0}$},
where $T_{x_0}$ is the first time so that the optimal trajectory starting from $x_0$ vanishes and stays at the origin for every time $t\geq T_{x_0}$. We have the following finite-time stabilization result. 
\begin{theorem}
    \label{theorem-finitetime}
    Let  $x_0\in \mathbb{R}^n$ and consider the solution $x^{\infty}$ of $(OCP)_\infty$ starting at $x_0$. Then $x^{\infty}$ reaches the origin in finite time. Moreover, the settling time map from $\mathbb{R}^n$ to $\mathbb{R}_{\geq 0}$  is well-defined, 
    homogeneous of degree $\mu$ with respect to $(\delta_\epsilon^r)_{\epsilon>0}$, and 
    $T_{x_0}\leq \frac{d+\mu}{c\mu}\,V_{\infty}^\frac{\mu}{d+\mu}(x_0)$ for every $x_0\in\mathbb{R}^n$, where $c = \max_{x \in V_\infty^{-1}(1)} F(x)$.
\end{theorem}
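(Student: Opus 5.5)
The plan is the standard Lyapunov argument for finite-time stability, using $V_\infty$ as the Lyapunov function along the optimal trajectory. By the proof of Theorem~\ref{prop11}, along $x^\infty$ we have
\[
\frac{d}{dt}V_\infty(x^\infty(t)) = -\frac{|u^\infty(t)|^q}{q}-F(x^\infty(t)) \leq -F(x^\infty(t)).
\]
By Lemma~\ref{lem-hom-opt-0}, $V_\infty$ is homogeneous of degree $d+\mu$ with respect to $(\delta^r_\epsilon)_{\epsilon>0}$. By Theorem~\ref{theorem1} and Proposition~\ref{prop14} it is positive definite, continuous, strictly convex and $\mathcal{C}^1$. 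Also $F$ is continuous, positive definite and homogeneous of degree $d$.

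The key inequality is a bound of the form $F(x)\geq c\,V_\infty(x)^{\frac{d}{d+\mu}}$, which I would get by rescaling onto the level set $V_\infty^{-1}(1)$. That set is compact, since $V_\infty$ is radially unbounded by Corollary~\ref{lemma-boundkinfty}. For $x\neq 0$, put $\epsilon=V_\infty(x)^{-1/(d+\mu)}$, so that $\delta^r_\epsilon(x)\in V_\infty^{-1}(1)$. Homogeneity then gives $F(x)=\epsilon^{-d}F(\delta^r_\epsilon x)\geq V_\infty(x)^{\frac{d}{d+\mu}}\min_{V_\infty^{-1}(1)}F$. The minimum is positive by compactness and positive definiteness of $F$. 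The correct constant here is the minimum of $F$ on the level set, not the maximum written in the statement; I would read the statement's $c$ as this minimum, or equivalently as coming from Lemma~\ref{lem1}.

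Next I would integrate the resulting differential inequality $\dot v\leq -c\,v^{\frac{d}{d+\mu}}$ for $v(t):=V_\infty(x^\infty(t))$. While $v>0$,
\[
\frac{d}{dt}v^{\frac{\mu}{d+\mu}}=\frac{\mu}{d+\mu}\,v^{-\frac{d}{d+\mu}}\,\dot v\leq -\frac{c\mu}{d+\mu}.
\]
So $v$ hits zero no later than $\frac{d+\mu}{c\mu}V_\infty(x_0)^{\frac{\mu}{d+\mu}}$. Since $v$ is nonincreasing and nonnegative, it stays at zero afterwards, and positive definiteness of $V_\infty$ then gives $x^\infty(t)=0$ from that time on. Hence $T_{x_0}$ is well defined and finite, and the bound holds. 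Well-definedness also uses that the optimal trajectory is unique (Theorem~\ref{theorem1}), so the settling time depends only on $x_0$.

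For homogeneity of the settling time map, Lemma~\ref{lem-hom-opt-0} shows that the optimal trajectory from $\delta^r_\epsilon(x_0)$ is $t\mapsto\delta^r_\epsilon(x^\infty(\epsilon^{-\mu}t))$. This vanishes for all $t\geq s$ exactly when $x^\infty$ vanishes for all times $\geq\epsilon^{-\mu}s$. Hence $T_{\delta^r_\epsilon(x_0)}=\epsilon^{\mu}T_{x_0}$.

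I expect the main obstacle to be the lower bound $F\geq c\,V_\infty^{d/(d+\mu)}$: one must check that the level set is compact, so that the extremum is attained and positive, and that the constant is oriented correctly. The absolute continuity of $v$ needed for the integration step follows from $V_\infty\in\mathcal{C}^1$ and $x^\infty$ being absolutely continuous.
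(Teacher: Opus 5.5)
Your proposal is correct and follows essentially the same route as the paper. The paper likewise combines $\frac{d}{dt}V_\infty(x^\infty(t))=-\mathcal{L}\le -F(x^\infty(t))$ with the homogeneity of $V_\infty$ (Lemma~\ref{lem-hom-opt-0}) and Lemma~\ref{lem1} to obtain $\dot v\le -c\,v^{\frac{d}{d+\mu}}$. It then integrates $v^{\frac{\mu}{d+\mu}}$ up to the hitting time and deduces the degree-$\mu$ homogeneity of the settling time from the dilation invariance of optimal trajectories.

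Your reading of $c$ as $\min_{V_\infty^{-1}(1)}F$ is the right one. It is exactly the lower constant $c_1$ that Lemma~\ref{lem1} provides. With the maximum, the inequality $F\ge c\,V_\infty^{\frac{d}{d+\mu}}$ fails unless $F$ is constant on $V_\infty^{-1}(1)$, so the ``max'' in the statement is a slip.
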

\begin{proof}
Let $u^\infty$ be the optimal control associated with $x^\infty$.

As in the proof of Theorem~\ref{prop11}, the directional derivative of $V_\infty$ along the optimal trajectory satisfies
\begin{align*}
\dot{V}_{\infty}(x^{\infty}(s))& = 
- \mathcal{L}(x^\infty(s),u^\infty(s))\leq -F(x^{\infty}(s)).
\end{align*}
From the homogeneity of $V_\infty$ given by Lemma~\ref{lem-hom-opt-0}, as well as the positive-definiteness and homogeneity of $F$, we obtain from Lemma~\ref{lem1} that
\begin{equation}\label{eq:diff-ineq0}
\dot{V}_{\infty}(x^{\infty}(s)) \leq -c\, V_\infty^{\frac{d}{d+\mu}}(x^{\infty}(s)),\ \ s\geq 0,\end{equation}
where $c = \max_{x \in V_\infty^{-1}(1)} F(x)$. It implies in particular that $s\mapsto {V}_{\infty}(x^{\infty}(s))$ is non increasing. It follows that, if $t_{x_0}$ is the first time such that $x^{\infty}$ reaches the origin, then ${V}_{\infty}(x^{\infty}(s))=0$ for $s\geq t_{x_0}$. Hence $t_{x_0}=T_{x_0}$. If $T_{x_0}>0$, then $V_{\infty}(x^{\infty}(s))>0$ for $s\in [0,T_{x_0})$ and we can multiply both sides of \eqref{eq:diff-ineq0} by $\frac{\mu}{d+\mu}V_{\infty}^{-\frac{d}{d+\mu}}(x^{\infty}(s))$  
to obtain $\frac{d}{ds}\left(V_{\infty}^\frac{\mu}{d+\mu}(x^{\infty}(s))\right) \leq -c\frac{\mu}{d+\mu}$.
Integrating  both sides over
$[0,T_{x_0}]$  
yields
\[0 = V_{\infty}^\frac{\mu}{d+\mu}(x^{\infty}(T_{x_0}))\leq V_{\infty}^\frac{\mu}{d+\mu}(x_0)-c\frac{\mu}{d+\mu}T_{x_0},\]
so that $T_{x_0}\leq \frac{d+\mu}{c\mu}\,V_{\infty}^\frac{\mu}{d+\mu}(x_0)$.

Finally notice that the homogeneity of $x_0\mapsto T_{x_0}$
follows from the homogeneity property of optimal trajectories of $(OCP)_\infty$, as described in Lemma~\ref{lem-hom-opt-0}.
\end{proof}

\begin{remark}
Theorem~\ref{theorem-finitetime} is reminiscent of ~\cite[Theorem 8.6, page 240]{polyakov2020generalized}, where finite-time stability is shown for vector fields that are homogeneous of negative degree with respect to a family of dilations and under a uniform asymptotic stability assumption.
Here, optimal trajectories are exactly the integral curves of the vector field $F_{opt}$ defined on $\mathbb{R}^n$ by
\[F_{opt}(x)=J_nx- \left\lfloor\frac{\partial V_\infty}{\partial x_n} (x)\right\rceil^{\frac{1}{q-1}}e_n,
\]
which is homogeneous of degree $-\mu$ with respect to $(\delta_\epsilon^r)_{\epsilon>0}$. Thanks to Theorem~\ref{theorem2}, one gets that the origin is an asymptotically stable equilibrium for $\dot x=F_{opt}(x)$.  
To apply \cite[Theorem 8.6, page 240]{polyakov2020generalized}, one would need to prove an additional uniformity property on bounded time intervals, which is not guaranteed by Theorem~\ref{theorem2}.

\end{remark}

The next result asserts that solutions of $(OCP)_\infty$ are also solutions of $(OCP)_T$ for $T$ large enough.

\begin{proposition}
\label{cor3}
Let $x^{\infty}$ and $T_{x_0}$ as in Theorem~\ref{theorem-finitetime} and denote as $(x^{\infty},p^{\infty})$ the associated extremal lift.
Then, $(x^{\infty}(t),p^{\infty}(t))=(0,0)$ for every $t\geq T_{x_0}$. Moreover, for every $T\geq T_{x_0}$ the restriction of $(x^{\infty},p^{\infty})$ to $[0, T]$ corresponds to the unique extremal lift associated with the solution of $(OCP)_{T}$ starting at $x_0$.

\end{proposition}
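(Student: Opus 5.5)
We first show that $p^{\infty}$ vanishes after $T_{x_0}$, and then obtain the finite-horizon statement from the uniqueness result of Proposition~\ref{prop5}.

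\textbf{Step 1: $x^\infty$ and $p^\infty$ vanish after $T_{x_0}$.} By Theorem~\ref{theorem-finitetime}, $x^{\infty}(t)=0$ for every $t\geq T_{x_0}$. Since $x^\infty$ solves~\eqref{n-integrator}, the control satisfies $u^{\infty}=\dot x^\infty_n$ a.e., so $u^{\infty}(t)=0$ for a.e.\ $t\geq T_{x_0}$. By Item~1 of Theorem~\ref{theorem2}, $u^{\infty}=\lfloor p_n^{\infty}\rceil^{\frac{1}{q-1}}$. Hence $p^{\infty}_n(t)=0$ for a.e.\ $t\geq T_{x_0}$, and by continuity for all such $t$.

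On $[T_{x_0},\infty)$ we have $\nabla F(x^\infty)=\nabla F(0)=0$, since $0$ is the minimum of the $\mathcal{C}^1$ convex function $F$. The adjoint equation in~\eqref{eq-Ham2} therefore reduces to $\dot p^{\infty}=-J_n^\top p^{\infty}$ there. So $p^\infty$ is a solution of this linear system whose output $e_n^\top p^\infty$ vanishes identically on an interval. Observability of $(-J_n^\top,e_n^\top)$ then gives $p^{\infty}(t)=0$ for all $t\geq T_{x_0}$.

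One can also see this directly. Successive components satisfy $\dot p_{i+1}=-p_i$, so $p_n\equiv 0$ forces $p_{n-1}\equiv0$, and so on down to $p_1$. Alternatively, $p^\infty(t)=-\nabla V_\infty(x^\infty(t))=-\nabla V_\infty(0)=0$, using Proposition~\ref{prop14} applied to the shifted optimal trajectory together with the fact that $0$ is the minimum of $V_\infty$. This second route is even quicker, and I would use it as the main argument, keeping the observability argument as a check.

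\textbf{Step 2: restriction to $[0,T]$ is the extremal of $(OCP)_T$.} Fix $T\geq T_{x_0}$. The restriction of $(x^\infty,p^\infty)$ to $[0,T]$ solves~\eqref{eq-Ham2}, starts at $x_0$, and satisfies $p^\infty(T)=0$ by Step~1, which is the transversality condition~\eqref{eq-transversality}. By Proposition~\ref{prop5}, the system~\eqref{eq-Ham2} with~\eqref{eq-transversality} has a unique solution. The PMP shows that the extremal lift of the optimal solution of $(OCP)_T$ is such a solution, so it coincides with $(x^\infty,p^\infty)|_{[0,T]}$. In particular, the optimal control of $(OCP)_T$ is $u^\infty|_{[0,T]}$.

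As a consistency check, $\mathcal{J}_T(x_0,u^\infty)=\mathcal{J}_\infty(x_0,u^\infty)=V_\infty(x_0)$ since the integrand vanishes after $T_{x_0}$. Combined with $V_T\le V_\infty$, this shows directly that $u^\infty$ is optimal for $(OCP)_T$.

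\textbf{Expected difficulty.} The only delicate point is justifying $p^\infty\equiv0$ after $T_{x_0}$. This needs either $\nabla F(0)=0$ together with observability, or $\nabla V_\infty(0)=0$ together with the dynamic-programming identity $p^\infty(t)=-\nabla V_\infty(x^\infty(t))$. The identity follows from Proposition~\ref{prop14} and the uniqueness of extremal lifts in Theorem~\ref{theorem2}, because $(x^\infty(t+\cdot),p^\infty(t+\cdot))$ is the extremal lift of the optimal trajectory starting from $x^\infty(t)$.
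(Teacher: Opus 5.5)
Your main argument is correct and is essentially the paper's. The paper also shows $u^{\infty}=0$ after $T_{x_0}$; it invokes the feedback form of Remark~\ref{remark-control}, whereas your reading $u^{\infty}=\dot x^{\infty}_n$ a.e.\ is more elementary. It then reduces the adjoint equation to $\dot p^{\infty}=-J_n^\top p^{\infty}$, where you rightly make explicit that this uses $\nabla F(0)=0$, concludes by observability, and applies Proposition~\ref{prop5}. Your alternative via $p^{\infty}(t)=-\nabla V_\infty(x^{\infty}(t))$ and $\nabla V_\infty(0)=0$ is also valid, given Proposition~\ref{prop14} and the shift property of extremal lifts used in the proof of Theorem~\ref{theorem2}. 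It just uses heavier machinery for the same conclusion.

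The ``consistency check'' at the end of Step~2, however, is a non sequitur: the inequality runs the wrong way. From $\mathcal{J}_T(x_0,u^{\infty})=V_\infty(x_0)$ and $V_T\leq V_\infty$ you only recover $V_T(x_0)\leq \mathcal{J}_T(x_0,u^{\infty})$, which holds for every admissible control. Optimality of $u^{\infty}|_{[0,T]}$ requires the reverse bound $V_\infty(x_0)\leq V_T(x_0)$. That is exactly the nontrivial content supplied by your Step~2 argument via Proposition~\ref{prop5}, from which $V_T(x_0)=V_\infty(x_0)$ for $T\geq T_{x_0}$ then follows.

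If you want a genuinely direct check, use sufficiency of the PMP for this convex problem. Take any $v\in L^q([0,T),\mathbb{R})$ with trajectory $x_v$ from $x_0$. Convexity of $\mathcal{L}$, the identity $\lfloor u^{\infty}\rceil^{q-1}=p^{\infty}_n$ and the adjoint equation give
\[
\mathcal{J}_T(x_0,v)-\mathcal{J}_T(x_0,u^{\infty})\geq \int_0^T \frac{d}{dt}\Big(p^{\infty}(t)^\top\big(x_v(t)-x^{\infty}(t)\big)\Big)\,dt=p^{\infty}(T)^\top\big(x_v(T)-x^{\infty}(T)\big)=0.
\]
The last equality is where $p^{\infty}(T)=0$ from Step~1 is essential. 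With that sentence corrected or removed, your proof is complete.
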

\begin{proof}
By Theorem~\ref{theorem-finitetime} and Remark~\ref{remark-control}, it follows that the optimal control $u^\infty$ is equal to zero for $t\geq T_{x_0}$.
 
Then $\dot{p}^{\infty}(t) = -J_n^\top p^{\infty}(t)$ and, by~\eqref{eq:opt}, $e_n^\top p^{\infty}(t) = 0$ for every $t\geq T_{x_0}$. Since the pair $(-J_n^\top,e_n^\top)$ is observable, we deduce that $p^{\infty}(t) = 0$ for every $t\geq T_{x_0}$. Then, the last part of the corollary follows from  Proposition~\ref{prop5}.
\end{proof}

The following result is an application of Proposition~\ref{prop13} and may be interpreted as a generalization of Theorem~\ref{theorem-finitetime} and Corollary~\ref{cor-feedback} in the case in which the value function $V_\infty$ of $(OCP)_\infty$ is replaced by a function satisfying a suitable condition generalizing the HJB equation~\eqref{eq:HJB}.
\begin{proposition}
\label{cor7}
Let $W$ be a positive definite mapping in $\mathcal{C}^1(\mathbb{R}^n,\mathbb{R})$ and assume that 
\begin{enumerate}
    \item there exists $C_0>0$ and $m\in (1,\infty)$ such that, the function $F_W$ defined by $F_W(x):=C_0\left|\frac{\partial W }{\partial x_n}(x)\right|^{m}-\nabla W(x)^\top J_nx$
is positive definite; 
\item there exist $\alpha_1$, $\alpha_2$,  
$d>0$ and $\mu\in (0, \frac{d}{mn})$ such that 
        \begin{equation}\label{eq:conv11}
            \alpha_1\sum_{i=1}^n |x_i|^\frac{d}{r_i} \le F_W(x) \le \alpha_2\sum_{i=1}^n |x_i|^\frac{d}{r_i},\qquad\forall x \in \mathbb{R}^n,
        \end{equation}
        where  
        $r_i:= \frac{d(m-1)}{m}+(n-i+1)\mu$ for $1\leq i\leq n$.
\end{enumerate}
Then, the feedback control $u_W(x) = -C_0\left\lfloor\frac{\partial W}{\partial x_n}(x)\right\rceil^{m-1}$ induces finite-time convergence for \eqref{n-integrator}. 
 If, moreover,  $F_W$ is continuously differentiable, convex and homogeneous of degree $d$ with respect to  $(\delta^r_\epsilon)_{\epsilon>0}$, then the feedback control $\tilde{u}_W(x) = -mC_0\left\lfloor\frac{\partial W}{\partial x_n}(x)\right\rceil^{m-1}$ solves $(OCP)_\infty$ with $F = (mC_0)^{\frac{1}{m-1}} F_W$ and $q = \frac{m}{m-1}$ and induces finite-time convergence.
\end{proposition}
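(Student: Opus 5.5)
The plan is to reduce everything to the HJB machinery of Section~\ref{sec:HJB} through a rescaling of $W$, with the exponent relation $q=\frac{m}{m-1}$. Then $\frac{q}{q-1}=m$ and $\frac{1}{q-1}=m-1$. Set $\lambda:=(mC_0)^{\frac{1}{m-1}}$, so that $\frac{\lambda^{m-1}}{m}=C_0$. A direct computation gives
\[
\nabla(\lambda W)(x)^\top J_nx-\frac{q-1}{q}\Big|\frac{\partial (\lambda W)}{\partial x_n}(x)\Big|^{\frac{q}{q-1}}
=\lambda\Big(\nabla W(x)^\top J_n x-C_0\Big|\frac{\partial W}{\partial x_n}(x)\Big|^m\Big)=-\lambda F_W(x).
\]
In other words, $\lambda W$ solves the HJB equation~\eqref{eq:HJB} with $F$ replaced by $\lambda F_W$. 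The difficulty in the first part is that $F_W$ need not be convex or homogeneous. I therefore plan to sandwich $\lambda W$ between two genuine value functions.

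\textbf{Comparison functions.} Let $G_0(x):=\sum_{i=1}^n|x_i|^{d/r_i}$. With $q=\frac{m}{m-1}$ one has $r_i=\frac dq+(n-i+1)\mu$. The condition $\mu<\frac{d}{mn}=\frac{d(q-1)}{qn}$ is exactly the constraint in~\ref{ass2}. Moreover $r_i\le \frac{d(m-1)}{m}+n\mu<d$, so $d/r_i>1$. Hence each term is $\mathcal C^1$ and convex, and $G_0$ satisfies~\ref{ass1} and~\ref{ass2}. Let $V^{(1)}$ and $V^{(2)}$ be the value functions of $(OCP)_\infty$ with $F=\lambda\alpha_1G_0$ and $F=\lambda\alpha_2G_0$ respectively. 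By~\eqref{eq:conv11}, $-\lambda F_W\le-\lambda\alpha_1G_0$ and $-\lambda F_W\ge -\lambda\alpha_2 G_0$. Proposition~\ref{prop13} then gives
\[
V^{(1)}\le \lambda W\le V^{(2)}.
\]
By Lemma~\ref{lem-hom-opt-0}, both $V^{(1)}$ and $V^{(2)}$ are positive definite, continuous and homogeneous of degree $d+\mu$ with respect to $(\delta^r_\epsilon)$.

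\textbf{Finite-time convergence for $u_W$.} Along solutions of~\eqref{n-integrator} closed by the continuous feedback $u_W$,
\[
\dot W=\nabla W^\top J_nx-C_0\Big|\frac{\partial W}{\partial x_n}\Big|^m=-F_W(x)\le-\alpha_1G_0(x).
\]
So $W$ is nonincreasing along solutions. Since $W\ge V^{(1)}/\lambda$ and $V^{(1)}$ is radially unbounded (Corollary~\ref{lemma-boundkinfty}), solutions remain bounded and are therefore defined on $[0,\infty)$.

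Lemma~\ref{lem1}, applied to $G_0$ (degree $d$) and $V^{(2)}$ (degree $d+\mu$), gives $G_0\ge c\,(V^{(2)})^{\frac{d}{d+\mu}}\ge c\,(\lambda W)^{\frac{d}{d+\mu}}$. Hence
\[
\dot W\le -c'\,W^{\frac{d}{d+\mu}} .
\]
Integrating $\frac{d}{dt}W^{\frac{\mu}{d+\mu}}\le -c'\frac{\mu}{d+\mu}$ exactly as in the proof of Theorem~\ref{theorem-finitetime} shows that $W(x(t))$ vanishes at some finite time and stays zero afterwards. Positive definiteness of $W$ then gives finite-time convergence to the origin. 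This holds for every solution, including possible non-unique ones.

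\textbf{Second part.} Now assume $F_W$ is $\mathcal C^1$, convex and homogeneous of degree $d$. Then $F:=\lambda F_W=(mC_0)^{\frac1{m-1}}F_W$ satisfies~\ref{ass1} and~\ref{ass2}, and $\lambda W$ is a positive definite $\mathcal C^1$ solution of~\eqref{eq:HJB}. By Corollary~\ref{cor6}, $\lambda W=V_\infty$, and the optimal feedback is
\[
-\Big\lfloor\lambda\frac{\partial W}{\partial x_n}\Big\rceil^{m-1}=-\lambda^{m-1}\Big\lfloor\frac{\partial W}{\partial x_n}\Big\rceil^{m-1}=-mC_0\Big\lfloor\frac{\partial W}{\partial x_n}\Big\rceil^{m-1}=\tilde u_W .
\]
Finite-time convergence then follows from Theorem~\ref{theorem-finitetime}.

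The main obstacle is the first part, where $W$ is neither homogeneous nor a true value function. The sandwich step via Proposition~\ref{prop13} is the key device that supplies both radial unboundedness and the differential inequality $\dot W\le -c'W^{d/(d+\mu)}$.
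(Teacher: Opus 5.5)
Your proposal is correct and follows the same route as the paper. You rescale $W$ by $(mC_0)^{\frac{1}{m-1}}$ with $q=\frac{m}{m-1}$, sandwich it via Proposition~\ref{prop13} between the value functions associated with (scaled) $\alpha_1\sum_i|x_i|^{d/r_i}$ and $\alpha_2\sum_i|x_i|^{d/r_i}$, use Lemmas~\ref{lem-hom-opt-0} and~\ref{lem1} to get $\dot W\le -c'W^{\frac{d}{d+\mu}}$, and obtain the second part from Corollary~\ref{cor6} and Theorem~\ref{theorem-finitetime}, exactly as the paper does, and in places more carefully: the paper writes the sandwich as $V_{F_1}\le W\le V_{F_2}$ without the scaling factor, and does not spell out why the comparison functions satisfy the standing assumptions or why closed-loop solutions exist globally.
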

\begin{proof}
     Let $W$, $C_0$, and $F_W$ be as in the statement. Let $\alpha:= (mC_0)^{\frac{1}{m-1}}$ and set $q:=\frac{m}{m-1}\in (1,\infty)$. A direct computation shows that 
     $\alpha F_W(x) = \frac{q-1}{q}\left|\frac{\partial (\alpha W)}{\partial x_n}(x)\right|^{\frac{q}{q-1}}-\nabla (\alpha W)(x)^\top J_nx$. 
 
For $i \in \{1,2\}$ set $F_i:= \alpha_i\sum_{j=1}^n |x_j|^\frac{d}{r_j}$, with $\alpha_1,\alpha_2$ as in~\eqref{eq:conv11}. Let $V_{F_1}$ and $V_{F_2}$ be the value functions of the infinite-horizon optimal control problems $(OCP)_\infty$ associated with $F_1$ and $F_2$, respectively.  As $F_W$ satisfies~\eqref{eq:conv11}, applying Proposition~\ref{prop13} one obtains 
$V_{F_1}(x) \le W(x) \le V_{F_2}(x)$ for $x \in \mathbb{R}^n$. Using Lemma~\ref{lem-hom-opt-0} and Lemma~\ref{lem1}, one can find $C>0$ such that 
 \begin{equation}
    F_1(x) \geq C V_{F_2}^\frac{d}{d+\mu}(x), 
    \qquad \forall x \in \mathbb{R}^n.\label{eq:estimate2} 
 \end{equation}
 \noindent
Let $x_W$ be the solution of~\eqref{n-integrator} associated with the continuous feedback law $u_W(x) = -C_0\left\lfloor\frac{\partial W}{\partial x_n}(x)\right\rceil^{m-1}$ starting from any arbitrary $x_0 \in \mathbb{R}^n$. Similarly to the proof of Theorem~\ref{theorem-finitetime}, on the non trivial time interval $I$ where $x_W$ is defined, the time derivative of $W$ along $x_W$ verifies for $t\geq 0$
    \begin{align*}
        \frac{d}{dt}W(x_W(t))=\hspace{-1mm}-F_W(x_W(t)) \le -F_1(x_W(t)) 
        \le - CV_{F_2}^\frac{d}{d+\mu}(x_W(t)) \le -CW^\frac{d}{d+\mu}(x_W(t)),
    \end{align*}

where the successive upper-bounds are obtained using~\eqref{eq:conv11} as well as~\eqref{eq:estimate2}. One gets that $I=\mathbb{R}_{\geq 0}$ and finite-time convergence follows for \eqref{n-integrator}. In particular, as in the proof of Theorem~\ref{theorem-finitetime}, we get that the trajectory starting at $x_0$ reaches the origin in a finite time $T_{x_0}$ satisfying
$T_{x_0}\leq \frac{d+\mu}{c C\mu}\,W^\frac{\mu}{d+\mu}(x_0)$.
The last part of the proposition follows from Corollary~\ref{cor6} and Theorem~\ref{theorem-finitetime}.
\end{proof}

As an application, one can establish a link between the results of  \cite{Harmouche02122017,hong2002finite}  and those of the previous sections. In the sequel, we will only focus on the connection with \cite{hong2002finite} since the one with  \cite{Harmouche02122017} is derived similarly.
First, we recall the expression of the controllers in \cite{hong2002finite}. Assume $n\geq 2$. Let $r_1,k > 0$ be such that $r_1 > (2n-1)k$, and set
$r_i =r_1-(i-1)k$ for $i=2,\dots,n$. Let $ \beta_0=r_2+1,\ \beta_{n-1}=2$, and
$\beta_j=\beta_0\frac{r_1}{r_{j+1}}$ for $j=1,\dots,n-2$.

Let $(\ell_j)_{1 \leq j \leq n}$ be a family of positive constants and set $u_0=0$. For $1 \leq j \leq n$ we set $\bar{x}_j=(x_1,\dots,x_j) \in \mathbb{R}^j$ and we define recursively the following functions
\begin{equation}\label{eq39}
    u_j(\bar{x}_j):= -\ell_j\left\lfloor  \lfloor x_j \rceil^{\beta_{j-1}-1} -\lfloor u_{j-1}(\bar{x}_{j-1})\rceil^{\beta_{j-1}-1} \right\rceil^\frac{r_j-k}{r_j(\beta_{j-1}-1)}, \end{equation}
    \begin{equation}\label{eq40}
    W_j(\bar{x}_j):=\frac{\left(|x_j|^{\beta_{j-1}}+(\beta_{j-1}-1)|u_{j-1}(\bar{x}_{j-1})|^{\beta_{j-1}}\right)}{\beta_{j-1}} 
    -x_j\lfloor u_{j-1}(\bar{x}_{j-1}) \rceil^{\beta_{j-1}-1}. 
\end{equation}
Furthermore, let $V_1(y) :=  |y|^{\beta_0}$ for $y\in\mathbb{R}$ and
\begin{equation}
     V_k(\bar{x}_k) := W_k(\bar{x}_k) + V_{k-1}(\bar{x}_{k-1})^{\frac{\beta_{k-1}r_k}{\beta_{k-2}r_{k-1}}},\qquad k=2,\dots,n. \label{eq41}
\end{equation}
It was shown in \cite{hong2002finite} that the feedback controller $u_n$ stabilizes the integrator chain in finite time provided that the constants $\ell_j$ are large enough. Furthermore, from (\ref{eq40}) and (\ref{eq41}), 
$u_n(x) = -\ell_n\left\lfloor  \frac{\partial V_n}{\partial x_n}(x)\right\rceil^{m-1}$
with $m = \frac{2r_n-k}{r_n}$ and the function
$F(x):=  \ell_n\left|\frac{\partial V_n(x)}{\partial x_n }\right|^{m} -  \nabla V_n(x)^\top J_n x$ 
is continuous, positive definite and homogeneous of degree $d = 2r_n-k$ with respect to the family of dilations $(\delta^r_\epsilon)_{\epsilon >0}$. By Lemma~\ref{lem1} the assumption~\eqref{eq:conv11} is satisfied.
The fact that the feedback $u_n$ stabilizes the system in finite time can then be deduced from Proposition~\ref{cor7}. 

\section{Stabilization in finite time with bounded controls}\label{sec:bounded}
In the previous section we have shown that, under the assumptions~\ref{ass1} and \ref{ass2}, the solutions of $(OCP)_\infty$ reach the origin in finite time and that the corresponding optimal controls can be expressed in feedback form as $u =u^\infty(x)$, where $u^\infty$ is homogeneous of degree $d/q$ with respect to a suitable family of dilations.
 
In particular, these feedback laws are not uniformly bounded over $\mathbb{R}^n$. We are tempted to let the parameter $q$ tend to $\infty$, in the hope of obtaining 0-homogeneous (hence globally bounded, if locally bounded) finite-time stabilizing feedback laws.

\subsection{Definition of the limit optimal control problem}
In order to deal with the case $q=\infty$, we consider for $T\in (0,\infty]$ and $u \in L^{\infty}([0,T),[-1,1])$ the cost function $\mathcal{K}_T(x_0,u) \;=\; \int_0^T F(x(t))\, dt$ 
where $F$ satisfies Assumption~\ref{ass1} and $x(\cdot)$ is the trajectory of~\eqref{n-integrator} starting at $x_0\in\mathbb{R}^n$. The intuition behind this choice comes from the fact that the limit of the instantaneous cost $\mathcal{L}$ of $(OCP)_T$ as $q$ tends to infinity is given by the function
 which associates with every $(x,u)\in \mathbb{R}^n\times\mathbb{R}$, the value $F(x)$ if $|u|\leq 1$ and $\infty$ if $|u|> 1$.
We then define the optimal control problem \((OCP)^{lim}_T\) as
\begin{equation}\label{eq:OCPinf}
(OCP)^{lim}_T:\qquad \min \{\mathcal{K}_T(x_0,u)
\mid u \in L^{\infty}([0,T),[-1,1])\},
\end{equation}
with value function
$V^{lim}_T(x_0) = \operatorname*{inf}_{u \in L^{\infty}([0,T),[-1,1])} \mathcal{K}_T(x_0,u)$.

In the following, we will occasionally use the following assumption, which is slightly stronger than~\ref{ass1}.

\medskip

\begin{enumerate}[label= (A\arabic*)$'$, leftmargin=1cm]
    \item \label{ass1bis} $F$ 
     is continuously differentiable, strictly convex, and positive definite. 
\end{enumerate}

\medskip

We have the following theorem. 
\begin{theorem}
    \label{theorem1-inf}
    Let $T\in (0,\infty]$.   
Then, for every $x_0\in\mathbb{R}^n$,  $(OCP)_T^{lim}$ admits a minimizer in $L^{\infty}([0,T),[-1,1])$ and $V_T^{lim}$ is a positive definite convex function. If $F$  satisfies~\ref{ass1bis}, then $(OCP)_T^{lim}$ has  unique minimizer  and $V_T^{lim}$ is strictly convex.

\end{theorem}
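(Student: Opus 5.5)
For existence I will use the direct method in the weak-$*$ topology of $L^\infty([0,T),[-1,1])$. Fix $x_0$. The zero control on a finite horizon gives finite cost. For $T=\infty$, a finite-cost admissible control exists because the chain of integrators is null-controllable with controls bounded by $1$ from any initial state, for instance by the time-optimal bang-bang control. After the hitting time we set $u\equiv 0$, and the cost is then a finite integral of the continuous function $F$ over a bounded interval. So $V^{lim}_T(x_0)<\infty$. Take a minimizing sequence $u_k$. Since $[-1,1]$ is convex and closed, a subsequence converges weak-$*$ in $L^\infty$ to some $u$ with values in $[-1,1]$. By variation of constants, the trajectories $x_k(t)=e^{J_nt}x_0+\int_0^te^{J_n(t-\tau)}e_nu_k(\tau)d\tau$ converge pointwise to $x(t)$, and they are equi-Lipschitz and locally bounded, so the convergence is uniform on compacts. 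If $T<\infty$, the cost $\mathcal{K}_T$ converges. If $T=\infty$, Fatou's lemma gives $\mathcal{K}_\infty(x_0,u)\le\liminf\mathcal{K}_\infty(x_0,u_k)=V^{lim}_\infty(x_0)$. In both cases $u$ is a minimizer.

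Positive definiteness follows as in Theorem~\ref{theorem1}. If $x_0\ne0$, the optimal trajectory is continuous and nonzero near $t=0$, so its cost is positive; the zero control gives $V^{lim}_T(0)=0$. For convexity, take optimal controls $u^{(1)},u^{(2)}$ for $x^{(1)},x^{(2)}$. The combination $\alpha u^{(1)}+(1-\alpha)u^{(2)}$ is admissible because $[-1,1]$ is convex. By linearity of the dynamics and convexity of $F$, the same chain of inequalities as in \eqref{VTconvex} holds with $\mathcal{K}_T$ in place of $\mathcal{J}_T$, which gives convexity of $V^{lim}_T$.

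Under \ref{ass1bis}, suppose $u^{(1)}\neq u^{(2)}$ are both optimal from the same $x_0$. Their trajectories then differ on a set of positive measure: if the trajectories agreed a.e., then, being absolutely continuous, their $n$-th components would have equal derivatives, forcing the controls to coincide. Strict convexity of $F$ then makes the midpoint control strictly better, a contradiction, so the minimizer is unique. For strict convexity of $V^{lim}_T$, take $x^{(1)}\neq x^{(2)}$. Their optimal trajectories differ at $t=0$, hence on a neighborhood of $0$ by continuity. Strict convexity of $F$ then makes the integral inequality strict on that neighborhood, and the second inequality in the analogue of \eqref{VTconvex} is strict. This gives strict convexity for every $T\in(0,\infty]$, which is consistent with the statement not singling out $T=\infty$.

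The main obstacle is the case $T=\infty$. Two points need care there: the finiteness of $V^{lim}_\infty$, which requires bounded null-controllability of the integrator chain (a classical fact I would cite), and the lower semicontinuity of the cost on an infinite horizon, which Fatou's lemma handles because $F\ge0$.
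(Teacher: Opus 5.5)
Your proposal is correct and follows essentially the same route as the paper. Both use weak-$*$ compactness of $L^\infty(\mathbb{R}_{\geq 0},[-1,1])$, uniform convergence of trajectories on compacts, lower semicontinuity of the infinite-horizon cost (you apply Fatou's lemma where the paper truncates at $\tau$ and uses monotone convergence), and then the convexity chain of Theorem~\ref{theorem1}; your explicit arguments for finiteness of $V^{lim}_\infty$ (bounded null-controllability of the integrator chain) and for uniqueness and strict convexity (from strict convexity of $F$, valid for every $T$ because the trajectories differ near $t=0$) supply details the paper leaves implicit or covers only by ``the same arguments as in Theorem~\ref{theorem1}''.
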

\begin{proof}
We assume $T=\infty$, the case $T<\infty$ being analogous. By weak-$*$ compactness of $L^{\infty}(\mathbb{R}_{\geq 0},[-1,1])$, any minimizing sequence $(u_k)_{k\geq 1}$ for $(OCP)_\infty^{lim}$ 
weak-$^*$ converges, up to a subsequence, to a control $u^*$ in $L^{\infty}(\mathbb{R}_{\geq 0},[-1,1])$. and the corresponding trajectories $(x^{(k)})_{k\geq 1}$ from $x_0$ converge uniformly on compact intervals to the trajectory $x^*$ associated with $u^*$. For every $\varepsilon>0$, $\tau>0$, and $k$ large enough one has 
\[\int_0^\tau F(x^{(k)}(t))dt \leq \int_0^\infty F(x^{(k)}(t))dt\leq V^{lim}_\infty(x_0) +\varepsilon. \]
Passing to the limit as $k$ goes to infinity on the left-hand side and applying the monotone convergence theorem, we obtain 
\[ \int_0^\infty F(x^*(t))dt = \lim_{\tau\to \infty} \int_0^\tau F(x^*(t))dt \leq V^{lim}_\infty(x_0) +\varepsilon.\]
As $\varepsilon>0$ is arbitrary, we conclude that $u^*$ minimizes $(OCP)_\infty^{lim}$.
Positive definiteness and convexity of $V_T^{lim}$ can be shown exactly as in the proof of Theorem~\ref{theorem1}.
The additional properties claimed under the assumption~\ref{ass1bis} can be shown using the same arguments as in the proof of Theorem~\ref{theorem1}.

\end{proof}

As a consequence of the positive definiteness of $V_T^{lim}$ and its monotonicity with respect to $T$, and analogously to Corollary~\ref{lemma-boundkinfty}, the following result follows.
\begin{corollary}
\label{lemma-boundkinfty-2}
Given $T>0$ there exist $\alpha_-,\alpha_+\in \mathcal{K}_\infty$ such that
\[\alpha_-(\|x_0\|)\leq V_T^{lim}(x_0) \leq \alpha_+(\|x_0\|).\]
Furthermore, the function $\alpha_+$ may be assumed to be independent of $T$, i.e., 
\[V_T^{lim}(x_0) \leq V_S^{lim}(x_0) \leq \alpha_+(\|x_0\|),\qquad\forall S\in (T,\infty].\]
\end{corollary}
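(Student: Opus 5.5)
The plan mirrors the argument behind Corollary~\ref{lemma-boundkinfty}. It combines three facts: convex positive definite functions are continuous and radially unbounded, $V^{lim}_T$ is monotone in $T$, and $V^{lim}_\infty$ is finite everywhere.

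First, I fix $T>0$. By Theorem~\ref{theorem1-inf}, $V_T^{lim}$ is convex and positive definite on $\mathbb{R}^n$. To get continuity, I need it to be finite everywhere, which holds since $V_T^{lim}\le V^{lim}_\infty$. Finiteness of $V^{lim}_\infty$ is shown by an explicit construction:
\begin{itemize}
\item pick any feedback that steers an arbitrary $x_0$ to the origin in finite time with $|u|\le 1$, for instance a time-optimal bang--bang control for the chain of integrators;
\item apply $u=0$ afterwards;
\item the trajectory is then bounded on a finite interval and identically zero after it, so its cost is finite.
\end{itemize}
A finite convex function on $\mathbb{R}^n$ is continuous. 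A continuous convex positive definite function is radially unbounded: for $\|x\|\ge 1$, convexity gives $V(x)\ge \|x\|\min_{\|y\|=1}V(y)$, and the minimum is positive. Lemma 4.3 of \cite{khalil2002nonlinear} then provides $\alpha_-,\alpha_+\in\mathcal{K}_\infty$ sandwiching $V^{lim}_T$.

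Second, for the uniformity in $T$, I note that the monotonicity $V^{lim}_{T}\le V^{lim}_S$ for $T<S\le\infty$ follows directly from $\mathcal{K}_T\le\mathcal{K}_S$, because $F\ge 0$. I then apply the first step to $T=\infty$. The function $V^{lim}_\infty$ is convex, positive definite and finite, hence continuous, so it admits an upper bound $\alpha_+\in\mathcal{K}_\infty$. This bound serves for every $T$, for instance $\alpha_+(s)=\max_{\|x\|\le s}V^{lim}_\infty(x)+s$. The lower bound $\alpha_-$ is allowed to depend on $T$.

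The main obstacle is the finiteness of $V^{lim}_\infty$, since it needs a bounded control achieving finite cost. If invoking controllability with bounded inputs of the nilpotent chain feels heavy, a more elementary route is available. I would take a saturated linear feedback from a nested-saturation design, or simply cite null-controllability of $(J_n,e_n)$ with controls in $[-1,1]$: the system's eigenvalues are all zero, so it is globally null controllable with bounded controls. This reaches $0$ in finite time, and continuity of $F$ bounds the cost.
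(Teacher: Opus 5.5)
Your proof is correct and follows the same route as the paper. The paper likewise uses that convex positive definite functions are continuous and radially unbounded, applies Khalil's Lemma 4.3, and takes $\alpha_+$ from $V_\infty^{lim}$ via the monotonicity $V_T^{lim}\le V_S^{lim}$. Your check that $V_\infty^{lim}$ is finite (steer to the origin with controls in $[-1,1]$, then set $u=0$) makes explicit a point the paper leaves implicit, and it is exactly what the continuity step needs.
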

\subsection{Study of the value function}
Similarly to the case $q<\infty$, one analyzes solutions of $(OCP)_T^{lim}$ with $T<\infty$ by using the PMP.
In particular, if $F$ satisfies \ref{ass1}, then similarly to Section~\ref{sec:hamiltonian}, solutions of $(OCP)_T^{lim}$ can be lifted to extremal pairs $(x^{(T)},p^{(T)})$ satisfying the Hamiltonian equations~\eqref{eq-Ham} with 
$H(x,u,p,\lambda) := p^\top J_n x +up_n - \lambda F(x)$
for some constant value $\lambda\in \{0,1\}$. Furthermore, the associated optimal control satisfies the minimization condition
\begin{equation*}
u^{(T)}(t)=\argmin_{u\in [-1,1]} H(x^{(T)}(t),u,p^{(T)}(t)),\quad \mbox{a.e. }t\in [0,T] 
\end{equation*}
and the following condition holds true
$p^{(T)}(T)=0$.
As in Section~\ref{sec:hamiltonian}, one has that, for optimal lifts, $\lambda=1$ and, from the minimization condition, the optimal control satisfies $u^{(T)}\in\lfloor  p^{(T)}_n\rceil^0.$
In particular, optimal pairs are solutions of
\begin{equation}
\begin{aligned}\label{eq-Ham3}
   \dot{x}& \in  J_n x+ \lfloor p_n\rceil^0 e_n, \quad
    \dot{p} =-J_n^\top p+\nabla F(x).
\end{aligned}
\end{equation}
Similarly to Theorem~\ref{theorem2}, we establish below a result ensuring that, under Assumption~\ref{ass1bis}, solutions of $(OCP)_\infty^{lim}$ can also be lifted to extremal pairs (satisfying~\eqref{eq-Ham3}), and that such extremal pairs uniformly converge to the origin. Note that the optimal control does not necessarily converge to zero. We have the following.
\begin{theorem}
\label{theorem2bis}
Assume that $F$ satisfies \ref{ass1bis}. For $x_0\in\mathbb{R}^n$, let $u^{\infty}$ and $x^{\infty}$ be an optimal control and associated trajectory for $(OCP)_\infty^{lim}$ starting from $x_0$. Then 
\begin{enumerate}
\item $x^{\infty}$ admits a unique extremal lift $(x^{\infty},p^{\infty})$, i.e., a unique solution of the Hamiltonian dynamics \eqref{eq-Ham3}, and $u^{\infty}(t) \in  \lfloor p^{\infty}_n(t)\rceil^0$ for almost every $t\geq 0$.
\item There exists a $\mathcal{K}_{\infty}$-function $C(\cdot)$ such that $\|(x^{\infty},p^{\infty})\|_\infty \leq C(\|x_0\|)$ and, moreover, $\lim_{t\to\infty}(x^{\infty}(t),p^{\infty}(t)) = (0,0)$.
\end{enumerate}
\end{theorem}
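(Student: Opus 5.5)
We follow the proof of Theorem~\ref{theorem2} and obtain $(x^\infty,p^\infty)$ as a limit of the finite-horizon extremal pairs $(x^{(T)},p^{(T)})$ of $(OCP)_T^{lim}$ as $T\to\infty$.

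\textbf{Uniform bounds.} Fix $x_0$ and take $T\geq 1$. Since $|u^{(T)}|\leq 1$, the variation of constants formula gives directly $\|x^{(T)}(t)\|\leq e(\|x_0\|+1)$ on $[0,1]$. So the bound \eqref{bound-xT} is simpler here than in the finite-$q$ case.

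The real difficulty is bounding $p^{(T)}(0)$. For finite $q$ we used the $L^{q/(q-1)}$ bound on $p_n^{(T)}$ coming from the cost; that bound is now lost, because $u^{(T)}\in\lfloor p_n^{(T)}\rceil^0$ only records the sign of $p_n^{(T)}$. I would first try to recover the bound from the value function. For $T<\infty$ the standard sensitivity relation, or a perturbation argument as in Proposition~\ref{prop14}, should give $-p^{(T)}(0)\in\partial V^{lim}_T(x_0)$. The perturbation would use the feedback $u_K=u^{(T)}+K^\top\delta$, which may leave $[-1,1]$; the fix is to take perturbations small relative to the slack, or to work with convex subgradients directly. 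A convex function that is bounded on bounded sets by Corollary~\ref{lemma-boundkinfty-2} has subgradients bounded locally uniformly, via $\|v\|\leq \sup_{B_1(x_0)}V_T^{lim}$. This yields $\|p^{(T)}(0)\|\leq\psi(\|x_0\|)$ with $\psi\in\mathcal{K}_\infty$ independent of $T$.

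If the sensitivity relation is hard to justify, I would use a fallback: argue by contradiction with $\|p^{(T_k)}(0)\|\to\infty$. Normalize by $\|p^{(T_k)}(0)\|$. The $\nabla F$ term disappears in the limit, so the normalized $\hat p$ solves $\dot{\hat p}=-J_n^\top\hat p$ with $\|\hat p(0)\|=1$, and then $\hat p_n$ is a nonzero polynomial. This should contradict optimality, since the control would have to be bang-bang with fewer than $n$ switchings in a way that violates $\mathcal{K}_T\leq\alpha_+(\|x_0\|)$. I expect this step to be the main obstacle.

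\textbf{Passage to the limit.} By compactness, take $T_k\to\infty$ with $p^{(T_k)}(0)$ converging and $u^{(T_k)}\rightharpoonup^* u^*$. Then $x^{(T_k)}\to x^*$ and $p^{(T_k)}\to p^*$ uniformly on compact intervals; for $p$ this uses the linear $p$-equation, whose source $\nabla F(x^{(T_k)})$ converges uniformly. The inclusion $u^*\in\lfloor p^*_n\rceil^0$ holds a.e.: on the set where $p^*_n\neq 0$, uniform convergence fixes the sign of $p_n^{(T_k)}$ for large $k$, and weak-$*$ limits preserve the constant value $\pm1$ there. The cost argument of Theorem~\ref{theorem2} shows $u^*$ is optimal for $(OCP)^{lim}_\infty$. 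By \ref{ass1bis} and Theorem~\ref{theorem1-inf} the minimizer is unique, so $x^*=x^\infty$ and $(x^\infty,p^*)$ is an extremal lift.

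\textbf{Uniqueness of the lift.} Suppose $p,\bar p$ are two lifts of $x^\infty$. Their difference satisfies $\frac{d}{dt}(p-\bar p)=-J_n^\top(p-\bar p)$, so $p_n-\bar p_n$ is a polynomial. Both must tend to $0$ (see below), which forces $p_n-\bar p_n\equiv 0$, and observability then gives $p=\bar p$. Alternatively, I would show that $p^\infty(0)=-\nabla V^{lim}_\infty(x_0)$ as in Proposition~\ref{prop14}, using strict convexity.

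\textbf{Item 2.} As in Theorem~\ref{theorem2}, we have $\|x^\infty(t)\|\leq\alpha_-^{-1}(\alpha_+(\|x_0\|))$ and $x^\infty(t)\to0$, because $V^{lim}_\infty(x^\infty(t))=\int_t^\infty F\to0$. By dynamic programming, $p^\infty(t+\cdot)$ lifts the optimal trajectory from $x^\infty(t)$, so the bound $\|p(0)\|\leq\psi(\|x(0)\|)$ established above gives $\|p^\infty(t)\|\leq\psi(\|x^\infty(t)\|)$, which tends to $0$. For this to apply, the lift at time $t$ must be the one produced by the construction; uniqueness of the lift ensures this.
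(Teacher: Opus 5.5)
Your passage to the limit is fine. Using the linearity of the $p$-equation is even a bit more elementary than the paper's appeal to Filippov's lemma. Your primary route to a $T$-uniform bound on $p^{(T)}(0)$ differs genuinely from the paper's and is sound. The paper writes $p^{(T_k)}_n=P_\beta+O(\Gamma(\|x_0\|))$ and observes that $p^{(T_k)}_n$ must vanish in every window of length $\sigma$, since otherwise $\dot x_n=\pm1$ there, contradicting the bound on $x^{(T_k)}$. It then concludes with Lemma~\ref{lemma-pol}. Your fallback is a non-quantitative version of exactly this mechanism.

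For your sensitivity relation, do not go through the feedback $u_K$: on bang arcs there is no slack to absorb $K^\top\delta$. Use convexity directly. For any $y_0$ and any $[-1,1]$-valued $v$ with trajectory $y$,
\[
\mathcal{K}_T(y_0,v)-V_T^{lim}(x_0)\geq\int_0^T\nabla F(x^{(T)})^\top\big(y-x^{(T)}\big)\,dt=-p^{(T)}(0)^\top(y_0-x_0)+\int_0^T p^{(T)}_n\big(u^{(T)}-v\big)\,dt .
\]
The last integral is nonnegative by the maximization condition, so $-p^{(T)}(0)\in\partial V^{lim}_T(x_0)$.

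Two genuine gaps remain.

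\emph{The bound does not vanish at the origin.} Your bound $\|p^{(T)}(0)\|\leq\sup_{B_1(x_0)}V_T^{lim}\leq\alpha_+(\|x_0\|+1)$ does not tend to $0$ as $x_0\to 0$. So it is not a $\mathcal{K}_\infty$ bound, and $\|p^\infty(t)\|\leq\psi(\|x^\infty(t)\|)$ does not yield $p^\infty(t)\to0$. The paper's bound $\Gamma(\|x_0\|)(n-1)!/\eta$ does vanish, because $\Gamma$ is built from $\nabla F$ along the trajectories and $\nabla F(0)=0$. In your framework you must add that $V^{lim}_\infty(x)=o(\|x\|)$. Take $K$ with $J_n+e_nK^\top$ Hurwitz: for small $\|x_0\|$ the feedback $u=K^\top x$ stays $[-1,1]$-valued along its exponentially decaying trajectory, and $F(y)=o(\|y\|)$. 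Then test the subgradient inequality on the sphere of radius $\|x_0\|$ to get $\|p^{(T)}(0)\|\leq\|x_0\|^{-1}\sup_{\|z\|\leq 2\|x_0\|}V^{lim}_\infty(z)$, which tends to $0$.

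\emph{Your uniqueness argument is circular.} It requires every lift to tend to $0$. Item 2 only concerns the constructed lift, and it itself invokes uniqueness to identify $p^\infty(t+\cdot)$ with the lift from $x^\infty(t)$. Nor is $p^\infty(0)=-\nabla V^{lim}_\infty(x_0)$ available: strict convexity does not give differentiability, and the paper obtains $\mathcal{C}^1$ regularity only later, under \ref{ass3}. You can break the circle in two steps.
\begin{enumerate}
\item The computation above, applied on $[t,T_k]$, gives $-p^{(T_k)}(t)\in\partial V^{lim}_{T_k-t}(x^{(T_k)}(t))$. So the bound at time $t$ passes to the limit with no uniqueness needed.
\item Two lifts differ by $e^{-J_n^\top t}c$, so $p_n-\bar p_n$ is a polynomial. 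Both $p_n$ and $\bar p_n$ vanish a.e.\ where $u^\infty\in(-1,1)$, and at each point near which $u^\infty$ takes both values $\pm1$ on sets of positive measure. Since $x^\infty_n\to0$, $u^\infty$ is not a.e.\ constant $\pm1$ on any tail, so there are infinitely many common zeros. Hence $p_n\equiv\bar p_n$, and observability gives $c=0$.
\end{enumerate}
The paper's own proof does not spell out uniqueness either.
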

The crucial difference of the proof of Theorem~\ref{theorem2bis}, compared to that of Theorem~\ref{theorem2}, 
lies in the arguments leading to a uniform bound of $p^{(T)}(0)$ associated with $(OCP)_T^{lim}$. We will make use of the following result.
\begin{lemma}
\label{lemma-pol}
Let $T>0$. For every $\sigma\in (0,T/n)$ there exists $\eta>0$ such that the polynomial $P_\beta(t) = \sum_{i=1}^n \beta_i t^{i-1}$, with $\beta=(\beta_1,\dots,\beta_n)\in\mathbb{R}^n$, satisfies $|P_\beta(t)|\geq \eta\|\beta\|$ on a closed subinterval of $[0,T]$ of length $\sigma$.
\end{lemma}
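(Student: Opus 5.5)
By homogeneity in $\beta$, it suffices to treat $\|\beta\|=1$. For a polynomial $P_\beta$ of degree at most $n-1$ that is not identically zero, the set $\{t\in[0,T]: |P_\beta(t)|<\eta\}$ is controlled via the roots. The plan is a root-counting argument combined with compactness of the unit sphere.

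First, $P_\beta$ has at most $n-1$ real roots $t_1\le\dots\le t_{k}$ in $[0,T]$, with $k\le n-1$. These points split $[0,T]$ into at most $n$ subintervals of total length $T$, so at least one of them has length $\ge T/n>\sigma$. Set $\rho:=(T/n-\sigma)/2>0$. Inside such a root-free open interval $(a,b)$ with $b-a\ge T/n$, the closed interval $I_\beta:=[a+\rho,b-\rho]$ has length $\ge\sigma$, and its distance from every root of $P_\beta$ in $[0,T]$ is at least $\rho$. We may shrink $I_\beta$ to length exactly $\sigma$.

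Next, we bound $|P_\beta|$ from below on $I_\beta$ uniformly in $\beta$ on the unit sphere. We define
\[
m(\beta):=\min\{|P_\beta(t)|: t\in[0,T],\ \mathrm{dist}(t,Z_\beta)\ge\rho\},
\]
where $Z_\beta$ is the zero set of $P_\beta$ in $[0,T]$. Here $m(\beta)>0$ for each $\beta$, since the set is compact and nonempty (it contains $I_\beta$) and avoids the zeros. Uniformity requires care, because $Z_\beta$ does not vary continuously: complex roots can land on the real axis, and roots can enter $[0,T]$ at its endpoints. To avoid this, I would use all complex roots instead. Write $P_\beta(t)=\beta_{j}\prod_{i}(t-z_i)$ with leading nonzero coefficient. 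A cleaner route is a contradiction/compactness argument. Suppose $\beta^{(k)}$ on the unit sphere have no interval of length $\sigma$ where $|P_{\beta^{(k)}}|\ge 1/k$. Extract a subsequence $\beta^{(k)}\to\beta^*$, which is also of norm one, so $P_{\beta^*}\neq0$. By the first step, $P_{\beta^*}$ has a root-free interval with a closed subinterval $I$ of length $\sigma$ at positive distance from its roots in $[0,T]$. Hence $\min_I|P_{\beta^*}|=:2\eta^*>0$. Since $P_{\beta^{(k)}}\to P_{\beta^*}$ uniformly on $[0,T]$ (the coefficients converge), for $k$ large we get $|P_{\beta^{(k)}}|\ge\eta^*>1/k$ on $I$. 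This contradicts the assumption, and so it yields a uniform $\eta$ for unit $\beta$.

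Finally, scaling gives $|P_\beta|\ge\eta\|\beta\|$ on an interval of length $\sigma$ for every $\beta\ne0$; the case $\beta=0$ is trivial. The only delicate point is making sure the interval exists for the limit polynomial with a strict positive margin. This is where the strict inequality $\sigma<T/n$ is used, giving slack $\rho>0$. The compactness argument then sidesteps any need for continuity of roots in $\beta$.
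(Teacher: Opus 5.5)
Your argument is correct and follows essentially the same route as the paper. In both, root counting shows that each nonzero $P_\beta$ is bounded away from zero on some closed subinterval of length $\sigma$; the paper uses the fixed partition into the $n$ intervals $(\frac{k-1}{n}T,\frac{k}{n}T)$ instead of gaps between roots. Compactness of the unit sphere plus homogeneity then gives a uniform $\eta$, and your sequential contradiction argument is an explicit version of the paper's claim that $\lambda(\beta)=\max_{\tau}\min_{t\in[0,\sigma]}|P_\beta(\tau+t)|$ is continuous; the abandoned digression about $m(\beta)$ and complex roots can simply be deleted.
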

\begin{proof}
Define $\lambda(\beta) := \max_{\tau\in [0,T-\sigma]}\min_{t\in [0,\sigma]}|P_\beta(\tau+t)|$  and observe that 
$\lambda(\beta)>0$ for every $\beta\neq 0$. Indeed, in this case $P_\beta$ admits at most $n-1$ real zeros so that there exists a subinterval $(\frac{k-1}{n}T,\frac{k}{n}T)$ for some $k\in \{1,\dots,n\}$ which does not contain zeros of $P_\beta$. In particular, $|P_\beta|>0$ on every closed subinterval of $(\frac{k-1}{n}T,\frac{k}{n}T)$ of length $\sigma$, showing that $\lambda(\beta)>0$. Furthermore, $\lambda$ is clearly a continuous function. We deduce that $\lambda(\beta)\geq \eta$ for some $\eta>0$ and for every $\beta$ satisfying $\|\beta\| = 1$. The lemma follows.
\end{proof}
\begin{proof}[Proof of Theorem~\ref{theorem2bis}]
Let $(x^{(T)},p^{(T)})$ be the optimal pairs for $(OCP)_T^{lim}$ with initial condition $x_0$, and $u^{(T)}$ the corresponding optimal controls. As $u^{(T)}$ belongs to $L^{\infty}([0,T),[-1,1])$ then, up to extending by zero on $[T,\infty)$ there exists an increasing and unbounded sequence $\{T_k\}_{k\geq 1}$ such that $u^{(T_k)}$ weak-$^*$ converges to an input $u^*\in L^{\infty}(\mathbb{R}_{\geq 0},[-1,1])$. Furthermore, the trajectories $x^{(T_k)}$ converge, uniformly on compact intervals, to the trajectory $x^*$ of~\eqref{n-integrator} associated with the input $u^*$. Let us prove the optimality of such a trajectory. Let $T>0$. If $k$
 is large enough so that $T_k\geq T$ then
 \[\int_0^T F(x^{(T_k)}(t))\,dt\leq V_{T_k}^{lim}(x_0)\leq V_\infty^{lim}(x_0).\]
 As $F(x^{(T_k)}(\cdot))$ converges uniformly to $F(x^*(\cdot))$ on $[0,T]$, passing to the limit as $k$ goes to infinity on the left-hand side we obtain 
  $\int_0^T F(x^*(t))\,dt\leq V_\infty^{lim}(x_0)$.
  Finally, letting $T$ tend to infinity and using the monotone convergence theorem, 
   $\int_0^\infty F(x^*(t))\,dt\leq V_\infty^{lim}(x_0)$.
   Then $x^* = x^\infty$ and $u^* = u^\infty$. 
   Moreover, as in the proof of Theorem~\ref{theorem2}, thanks to Corollary~\ref{lemma-boundkinfty-2} we have that
   \begin{equation}
   \label{eq:x-estimate-limit}
   \|x^\infty\|_\infty\leq \alpha_-^{-1}(\alpha_+(\|x_0\|)), 
   \end{equation}
   for some $\mathcal{K}_\infty$-functions $\alpha_-,\alpha_+$, and $\lim_{t\to\infty}x^\infty(t)=0$. 
   We will next show that $p^{(T_k)}(0)$ tends, as $k$ goes to infinity, to a value $p_0$ such that $\|p_0\|$ is bounded by a $\mathcal{K}_\infty$-function of $\|x_0\|$. With no loss of generality, we will prove the existence of such a function on an interval of definition $[0, M]$, where $M>0$ can be chosen arbitrarily large. 
   
   Let us fix $T>4\alpha_-^{-1}(\alpha_+(M))n$. Up to taking $k$ large enough we may assume, from \eqref{eq:x-estimate-limit} and the convergence of $x^{(T_k)}$ to $x^\infty$ on $[0,T]$, that $|x^{(T_k)}(t)|\leq 2\alpha_-^{-1}(\alpha_+(\|x_0\|))$ for $t\in [0,T]$. Applying the variation of constants formula to~\eqref{eq-Ham3} we can write
   \[p_n^{(T_k)}(t) = e_n^\top e^{-J_n^\top t}p^{(T_k)}(0) + e_n^\top\int_0^t e^{-J_n^\top (t-s)} \nabla F(x^{(T_k)}(s))\,ds\]
   for $t\in [0, T]$.
  
   The first term on the right-hand side is a polynomial in $t$ that can be expressed, in the notation of Lemma~\ref{lemma-pol}, as $P_\beta(t)$ with $\beta_h= \frac{1}{(h-1)!} p^{(T_k)}_{n+1-h}(0)$ for $h=1,\dots,n$. The second term is bounded by $\Gamma(\|x_0\|)$ for some $\mathcal{K}_\infty$-function $\Gamma$  independently of $t\in [0,T]$ and of $k$, as it follows from the uniform bound of the trajectories $x^{(T_k)}$ over $[0,T]$ and the fact that $\nabla F(0)=0$.
   
   Let now fix $\sigma\in (4\alpha_-^{-1}(\alpha_+(M)),T/n)$. 
   We claim that, for $k$ large enough, $p_n^{(T_k)}$ admits zeros on every subinterval of $[0,T]$ of length $\sigma$. Indeed, if that were not the case then $\dot{x}_n^{(T_k)}$ would be constantly equal to $1$ or $-1$ on such a subinterval, for $k$ arbitrarily large, contradicting the fact that $\|x^{(T_k)}(t+\sigma)-x^{(T_k)}(t)\|\leq 2\max_{s\in [0,T]} |x^{(T_k)}(s)| \leq 4\alpha_-^{-1}(\alpha_+(M))$ for every $t\in [0, T-\sigma]$.
   
   As a consequence, for every interval $I\subset [0,T]$ of length $\sigma$, one has $\min_{t\in I} |P_\beta (t)| \leq \Gamma(\|x_0\|)$.
   We deduce from Lemma~\ref{lemma-pol} that $\|\beta\|\leq \Gamma(\|x_0\|)/\eta$ for some $\eta>0$, leading to the uniform bound $\|p^{(T_k)}(0)\|\leq \Gamma(\|x_0\|)(n-1)!/\eta$. 
  
   Up to extracting a subsequence, we have that $p^{(T_k)}(0)$ converges to $p_0\in \mathbb{R}^n$ such that $\|p_0\|\leq \Gamma(\|x_0\|)(n-1)!/\eta$. As the right-hand side of~\eqref{eq-Ham3} is upper semicontinuous and takes compact convex values, we obtain, according to \cite[Lemma 1, p.87]{filippov2013differential}, the convergence of  $(x^{(T_k)},p^{(T_k)})$ to an extremal pair $(x^{\infty},p^{\infty})$, and the corresponding optimal control satisfies $u^{\infty}(t) \in  \lfloor p^{\infty}_n\rceil^0(t)$ for almost every $t\geq 0$. This completes the proof of Item 1. The proof of Item 2 can be completed thanks to Corollary~\ref{lemma-boundkinfty-2} and by following the same arguments as in the proof of Theorem~\ref{theorem2}.
 \end{proof}

Similarly to Theorem~\ref{prop11} and Corollary~\ref{cor-feedback}, we have the following result.

\begin{theorem}
    \label{HJB-inf}
The value function  $V_\infty^{lim}$ of $(OCP)_\infty^{lim}$ satisfies the HJB equation
\begin{equation}
\nabla V_\infty^{lim}(x)^\top J_nx  -\left|\frac{\partial V_\infty^{lim}}{\partial x}\right| + F(x) = 0, \label{eq:HJB-inf}
\end{equation}
at every differentiability point $x\in \mathbb{R}^n$
of $V_\infty^{lim}$. Furthermore, for every initial condition $x_0$, there exists an optimal solution of $(OCP)_\infty^{lim}$ such that the corresponding control input
satisfies $u(t) \in -\lfloor \ell_n(t) \rceil^0$ with $\ell(t) \in \partial V_\infty^{lim}(x(t))$ for almost every $t\geq 0$.
\end{theorem}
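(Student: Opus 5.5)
We work with $V:=V_\infty^{lim}$, which is convex and hence locally Lipschitz and differentiable almost everywhere, and we read the term $\left|\frac{\partial V_\infty^{lim}}{\partial x}\right|$ in \eqref{eq:HJB-inf} as $\left|\frac{\partial V}{\partial x_n}\right|$. The proof has three steps: two inequalities at a differentiability point $x$, followed by the selection statement, which comes from the Hamiltonian lift of Theorem~\ref{theorem2bis}.

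\emph{Dynamic programming and the inequality ``$\geq$''.} For every $x_0$, every constant control $v\in[-1,1]$ and every small $h>0$, let $y$ be the trajectory starting at $x_0$ under $v$. Concatenating $v$ on $[0,h]$ with an optimal control from $y(h)$ gives
\[
V(x_0)\leq \int_0^h F(y(s))\,ds+V(y(h)).
\]
Suppose $V$ is differentiable at $x_0$. Dividing by $h$ and letting $h\to0^+$ yields
\[
0\leq F(x_0)+\nabla V(x_0)^\top(J_nx_0+v e_n).
\]
We then choose $v=-\mathrm{sign}(\partial_{x_n}V(x_0))$. This gives $\nabla V(x_0)^\top J_nx_0-|\partial_{x_n}V(x_0)|+F(x_0)\geq 0$.

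\emph{The reverse inequality.} Take an optimal pair $(x^\infty,u^\infty)$ from $x_0$. By optimality of tails,
\[
V(x^\infty(h))=V(x_0)-\int_0^hF(x^\infty(s))\,ds .
\]
We write $x^\infty(h)=x_0+h(J_nx_0+\bar u_h e_n)+o(h)$, where $\bar u_h=\frac1h\int_0^h u^\infty\in[-1,1]$. Since $V$ is differentiable at $x_0$ and locally Lipschitz, the difference quotient gives
\[
\nabla V(x_0)^\top(J_nx_0+\bar u_h e_n)+F(x_0)=o(1).
\]
Since $\bar u_h\,\partial_{x_n}V(x_0)\geq -|\partial_{x_n}V(x_0)|$, we get $\nabla V^\top J_nx_0-|\partial_{x_n}V|+F\leq o(1)$. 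Letting $h\to0$ gives the equality \eqref{eq:HJB-inf}. Existence of an optimal control is guaranteed by Theorem~\ref{theorem1-inf}, and the argument does not require uniqueness.

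\emph{The selection statement, which is the main obstacle.} We would argue as in Proposition~\ref{prop14} and show $-p^\infty(t)\in\partial V(x^\infty(t))$ for all $t$, where $(x^\infty,p^\infty)$ is the extremal lift of Theorem~\ref{theorem2bis}. Then $\ell(t):=-p^\infty(t)$ satisfies $u(t)\in\lfloor p_n^\infty\rceil^0=-\lfloor\ell_n\rceil^0$.

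By time invariance it suffices to treat $t=0$. Fix $y\in\mathbb{R}^n$ and let $x_K=x^\infty+\delta$ with $\delta(t)=e^{(J_n+e_nK^\top)t}y$. The control $u^\infty+K^\top\delta$ may violate the constraint $|u|\leq1$, and handling this is the hard part. We would instead bypass perturbing at the $\infty$-horizon level and pass to the limit from finite horizons $T_k$. For $(OCP)^{lim}_{T_k}$, convexity gives, for the lift $p^{(T_k)}$, the inequality
\[
V_{T_k}^{lim}(x_0+y)\geq V_{T_k}^{lim}(x_0)-p^{(T_k)}(0)^\top y .
\]
This is the standard PMP sensitivity relation in convex problems. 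It is obtained by integrating $\frac{d}{dt}\big(p^\top(x'-x)\big)$ along any admissible $x'$ and using the minimization condition together with convexity of $F$, exactly as in Proposition~\ref{prop5}.

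Under \ref{ass1bis}, we then use $V_{T_k}^{lim}\to V$ pointwise, which follows from monotonicity and a compactness argument like that in Theorem~\ref{theorem2bis}, together with $p^{(T_k)}(0)\to p^\infty(0)$. Passing to the limit gives $-p^\infty(0)\in\partial V(x_0)$. Finally, at differentiability points $\partial V=\{\nabla V\}$, which makes the selection consistent with the HJB equation.
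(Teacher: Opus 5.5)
Your first two steps are correct, and they prove the HJB equation by a route different from the paper's. The paper never works with $V_\infty^{lim}$ directly. It proceeds as follows:
\begin{itemize}
\item it shows $\limsup_{p\to\infty}V_\infty^{(p)}\le V_\infty^{lim}$;
\item it extracts, by equi-Lipschitz compactness, a convex limit $\bar V$ of the value functions of the $L^p$-problems;
\item it passes to the limit in \eqref{eq:HJB} at differentiability points of $\bar V$;
\item it applies Filippov's approximation theorem to the inclusion $\dot x\in J_nx-e_n\lfloor e_n^\top\partial\bar V\rceil^0$ to get $\bar V=V_\infty^{lim}$.
\end{itemize}
The paper's selection statement then comes from limits of $q$-optimal closed-loop trajectories. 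Your dynamic-programming argument uses constant controls for one inequality and, for the other, optimality of tails together with the averaged control $\bar u_h$. It needs only existence of minimizers and differentiability at the point, and it is valid under \ref{ass1}. The paper's route additionally yields the locally uniform convergence $V_\infty^{(q)}\to V_\infty^{lim}$. Your sensitivity inequality $-p^{(T)}(0)\in\partial V_T^{lim}(x_0)$ is also correct. What it uses is $u\,p_n=|p_n|$, i.e.\ $u\in\lfloor p_n\rceil^0$, so that $p_n(u'-u)\le 0$. It identifies the subgradient $\ell(t)$ as minus the costate of an extremal, which is more than the paper's selection gives.

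The gap is that your selection argument assumes \ref{ass1bis}, whereas the theorem is stated under \ref{ass1} only. This is why the statement says ``an optimal control'' and ``there exists an optimal solution'', and Proposition~\ref{prop13-limit} uses it in that generality. The paper's proof cites Theorem~\ref{theorem2bis} only for $x^\infty(t)\to 0$, which holds under \ref{ass1} anyway. Without uniqueness, your reduction ``by time invariance it suffices to treat $t=0$'' fails. Finite-horizon extremals started at $x^\infty(t)$ may converge to an optimal trajectory other than the tail of $x^\infty$, so you do not obtain $-p^\infty(t)\in\partial V(x^\infty(t))$.

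The repair is short.
\begin{itemize}
\item Take as your optimal solution the limit $(x^*,p^*)$ of a subsequence of finite-horizon extremals $(x^{(T_k)},p^{(T_k)})$ from $x_0$. The bound on $p^{(T_k)}(0)$ and the optimality of $x^*$ in the proof of Theorem~\ref{theorem2bis} do not use strict convexity.
\item Fix $t$. The shifted pair $(x^{(T_k)}(t+\cdot),p^{(T_k)}(t+\cdot))$ satisfies the extremal conditions on $[0,T_k-t]$ with vanishing final costate. Your computation therefore gives $-p^{(T_k)}(t)\in\partial V^{lim}_{T_k-t}(x^{(T_k)}(t))$.
\item The base point now moves with $k$, so pointwise convergence is not enough. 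Pass to the limit using that the monotone pointwise convergence of the finite convex functions $V_T^{lim}$ to $V_\infty^{lim}$ is locally uniform.
\end{itemize}
This yields $-p^*(t)\in\partial V_\infty^{lim}(x^*(t))$ for every $t$. The condition $u^*(t)\in\lfloor p^*_n(t)\rceil^0$ a.e.\ follows from the same closure argument as in the proof of Theorem~\ref{theorem2bis}, which is exactly the required selection.
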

\begin{proof}
In the following, $(OCP)_\infty^{q}$ denotes the optimal control problem introduced in Section~\ref{sec:prob}, stressing the dependence on $q>1$ and assuming the function $F$ to be independent of $q$. We denote the corresponding value functions as $V_\infty^{(q)}$.  

Fix $x_0\in \mathbb{R}^n$, $\varepsilon>0$, and $q>1$. Consider an optimal control $u^\infty$ and the corresponding trajectory $x^\infty$ for $(OCP)_\infty^{lim}$. By Theorem~\ref{theorem2bis}, $x^\infty(t)$ tends to zero as $t$ goes to infinity and then, according to Corollary~\ref{lemma-boundkinfty} and Theorem~\ref{theorem2}, there exists $T>0$ large enough such that $V_\infty^{(q)}(x^\infty(T))<\varepsilon$ and the optimal control $u^{(q)}$ of  $(OCP)_\infty^{q}$ with initial condition $x^\infty(T)$ satisfies $\|u^{(q)}\|_\infty \leq 1$. Then, taking
$u^\varepsilon(t)$ equal to $u^\infty(t)$ if $t\leq T$ and $u^{(q)}(t-T)$ if $t>T$, and letting $x^\varepsilon$ be the corresponding trajectory from $x_0$, we have for every $p\geq q$
\begin{align*}
V_\infty^{(p)}(x_0)& \leq  \int_0^T \left(\frac{|u^\infty(t)|^{p}}{p} + F(x^\infty(t)) \right)dt +  \int_T^\infty \left(\frac{|u^{\varepsilon}(t)|^{p}}{p} + F(x^\varepsilon(t)) \right)dt\\
&  \leq  \int_0^T \left(\frac{1}{p} + F(x^\infty(t)) \right)dt + V_\infty^{(q)}(x^\infty(T))\leq   V_\infty^{lim}(x_0) + \frac{T}{p} + \varepsilon.
\end{align*}
By letting $p$ tend to infinity and $\varepsilon$ to zero we obtain that $\limsup_{p\to\infty} V_\infty^{(p)}(x_0) \leq V_\infty^{lim}(x_0)$ for every $x_0\in \mathbb{R}^n$. Furthermore, the previous estimate ensures that the convex function 
$x\mapsto \sup_{p>q} V_\infty^{(p)}(x)$
takes values in $\mathbb{R}$ for every $x\in \mathbb{R}^n$. In particular, the family $\{V_\infty^{(p)}\mid p>q\}$ is uniformly bounded on compact sets. By~\cite[Theorem 10.6]{rockafellar1997convex}, such a family is also equi-Lipschitzian, hence, by the Ascoli-Arzel\`a theorem, there exists a sequence $(p_k)_{k\in\mathbb{N}}\subset [q,\infty)$ converging to infinity such that $(V_\infty^{(p_k)})_{k\in\mathbb{N}}$ uniformly converges to a convex function $\bar V\leq V_\infty^{lim}$ on compact sets. Similarly to~\cite[Theorem 25.7]{rockafellar1997convex},
one has that $\nabla V_\infty^{(p_k)}$ converges to $\nabla \bar V$ on differentiability points of $\bar V$. Since each $V_\infty^{(p_k)}$ satisfies \eqref{eq:HJB} with $q=p_k$, letting $k$ tend to infinity we obtain
\[
\nabla\bar V(x)^\top J_nx  -\left|\frac{\partial \bar V}{\partial x_n}(x)\right| + F(x) = 0
\]
for every differentiability point $x$ of $\bar V$. 
Recall now that solutions of 
$(OCP)_\infty^{p_k}$ are trajectories of $\dot x = f_k(x)$ where $f_k(x) := J_nx- e_n \lfloor \frac{\partial V_\infty^{(p_k)}}{\partial x_n}\rceil^{\frac{1}{p_k-1}}$, and consider the differential inclusion $\dot x\in \mathcal{F}(x)$
where $\mathcal{F}(x)$ is the closed and convex set equal to $ J_nx- e_n \lfloor e_n^\top \partial \bar V \rceil^0$ for each $x\in \mathbb{R}^n$. Since $x\mapsto \mathcal{F}(x)$ is an upper semi-continuous set-valued map, $\dot x\in \mathcal{F}(x)$ admits nontrivial solutions $x^*$ on time intervals of the type 

$[0,T)$ with $T>0$. Then $x^*$ is a solution of~\eqref{n-integrator} corresponding to a (measurable) control input $e_n^\top(\dot{x}^*-J_nx^*)$ that takes values in $\lfloor e_n^\top \partial \bar V(x^*(t)) \rceil^0\subset [-1,1]$ for a.e. $t\in [0,T)$, and can in particular be extended to $[0,\infty)$.  
Applying~\cite[Theorem 1, p.87]{filippov2013differential} one obtains that for every $\epsilon>0$ and $T>0$ there exists $k$ large enough such that for every solution $x^{(k)}$ of $(OCP)_\infty^{p_k}$  there exists a solution $x^{(k)*}$ of the differential inclusion
$\dot x\in \mathcal{F}(x)$
satisfying $\|x^{(k)*}(t)-x^{(k)}(t)\|\leq \epsilon$ for $t\in[0,T]$.  
By arbitrariness of $\epsilon$ and $T$, $\liminf_{k\to\infty} V_\infty^{(p_k)}(x_0) \geq V_\infty^{lim}(x_0)$
for every $x_0\in\mathbb{R}^n$. Then $V_\infty^{lim} = \bar V$ and~\eqref{eq:HJB-inf} holds true at differentiability points of $V_\infty^{lim}$. One deduces from the Ascoli-Arzel\`a theorem and \cite[Corollary 1, p.77]{filippov2013differential} that for every initial condition there exists a solution of $\dot x=\mathcal{F}(x)$ which is also optimal for $(OCP)_\infty^{lim}$. This concludes the proof of the theorem.
\end{proof}

\begin{remark}
Note that the previous proof also shows the uniform convergence of $V_\infty^{(q)}$ to $V_\infty^{lim}$ on compact sets.
\end{remark}
\begin{proposition}
    \label{prop13-limit}
    Let $W\in \mathcal{C}^1(\mathbb{R}^n,\mathbb{R})$ be a positive definite function.
    \begin{enumerate}
    \setlength{\itemsep}{5pt}
        \item If $W$ is a sub-solution of \eqref{eq:HJB-inf}, i.e., $W$ verifies
        $\nabla W(x)^\top J_nx -\left|\frac{\partial W}{\partial x_n}\right| \leq -F(x)$ for $x \in \mathbb{R}^n$, then $V_\infty^{lim}(x) \leq W(x)$ for 
        $x \in \mathbb{R}^n$.
        \item If $V_\infty^{lim}\in \mathcal{C}^1(\mathbb{R}^n,\mathbb{R})$ and $W$ is a super-solution of \eqref{eq:HJB-inf}, i.e., $W$ verifies
        $\nabla W(x)^\top J_nx -\left|\frac{\partial W}{\partial x_n}\right| \geq -F(x)$ for $x \in \mathbb{R}^n$, 
        then $V_\infty^{lim}(x) \geq W(x)$ for $x \in \mathbb{R}^n$.
    \end{enumerate}
    
\end{proposition}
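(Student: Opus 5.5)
Both items follow the pattern of Proposition~\ref{prop13}, with the $L^q$ penalty replaced by the constraint $|u|\le 1$ and Young's inequality replaced by the elementary bound $b\,\lfloor a\rceil^0 \subset [-|b|,|b|]$.

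\textbf{Item 1.} Let $W$ be a sub-solution. Since $u\mapsto$ sign feedbacks are discontinuous, we work with the differential inclusion
\[
\dot x\in J_nx - e_n\Big\lfloor \frac{\partial W}{\partial x_n}(x)\Big\rceil^0 .
\]
Its right-hand side is upper semicontinuous with compact convex values, so solutions $x_W$ exist from every $x_0$ on some interval $[0,T_*)$. The associated measurable control $u_W(t)=e_n^\top(\dot x_W-J_nx_W)$ takes values in $[-1,1]$, and for a.e.\ $t$ we have $\frac{\partial W}{\partial x_n}(x_W)\,u_W = -\big|\frac{\partial W}{\partial x_n}(x_W)\big|$. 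Because $W\in\mathcal{C}^1$ and $x_W$ is absolutely continuous, the chain rule gives
\[
\frac{d}{dt}W(x_W)=\nabla W^\top J_nx_W-\Big|\frac{\partial W}{\partial x_n}\Big|\le -F(x_W).
\]
Hence $\int_0^t F(x_W)\,ds\le W(x_0)$. Since the control is bounded, variation of constants shows there is no blow-up, so $T_*=\infty$. Letting $t\to\infty$ and extending $u_W$ (already admissible) gives $V^{lim}_\infty(x_0)\le \mathcal{K}_\infty(x_0,u_W)\le W(x_0)$.

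\textbf{Item 2.} Assume $V^{lim}_\infty\in\mathcal{C}^1$ and let $W$ be a super-solution. By Theorem~\ref{theorem1-inf} there is an optimal control $u^\infty$ with trajectory $x^\infty$. The first step is to compute $\frac{d}{dt}V^{lim}_\infty(x^\infty(t))$. By the dynamic programming principle (tails of optimal trajectories are optimal), $V^{lim}_\infty(x^\infty(t))=\int_t^\infty F(x^\infty)\,ds$, so
\[
\frac{d}{dt}V^{lim}_\infty(x^\infty)=-F(x^\infty)=\nabla V^{lim\top}_\infty\big(J_nx^\infty+e_nu^\infty\big)
\quad\text{a.e.}
\]
Comparing with the HJB equation~\eqref{eq:HJB-inf} (now valid everywhere) yields $\frac{\partial V^{lim}_\infty}{\partial x_n}(x^\infty)\,u^\infty=-\big|\frac{\partial V^{lim}_\infty}{\partial x_n}(x^\infty)\big|$ a.e. Then
\[
\frac{d}{dt}\big(V^{lim}_\infty-W\big)(x^\infty)=-F(x^\infty)-\nabla W^\top J_nx^\infty-\frac{\partial W}{\partial x_n}u^\infty
\le -\Big|\frac{\partial W}{\partial x_n}\Big|-\frac{\partial W}{\partial x_n}u^\infty\le 0,
\]
using the super-solution inequality and $|u^\infty|\le1$. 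Therefore $(V^{lim}_\infty-W)(x^\infty(t))\le (V^{lim}_\infty-W)(x_0)$. As $x^\infty(t)\to0$ (shown in the proof of Theorem~\ref{theorem2bis}, which relies only on the bounds of Corollary~\ref{lemma-boundkinfty-2}), and both $V^{lim}_\infty$ and $W$ are continuous and vanish at $0$, letting $t\to\infty$ gives $W(x_0)\le V^{lim}_\infty(x_0)$.

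The main obstacle is in Item 2: the convergence $x^\infty\to0$ must not depend on \ref{ass1bis}. I would re-derive it directly. $V^{lim}_\infty(x^\infty(t))$ is the tail of a convergent integral, so it tends to $0$, and the lower bound $\alpha_-$ of Corollary~\ref{lemma-boundkinfty-2} then forces $x^\infty(t)\to0$. The existence and global definition of solutions of the discontinuous closed loop in Item 1 is handled by Filippov theory as in Theorem~\ref{HJB-inf}.
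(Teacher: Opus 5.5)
Your proof is correct. Item 1 follows the paper's argument: a Filippov solution of the sign-feedback inclusion, the chain rule for $W$, and integration using $W\geq 0$.

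Item 2 takes a different and lighter route. The paper invokes Theorem~\ref{HJB-inf} to produce a \emph{particular} optimal trajectory whose control satisfies $u^*(t)\in-\lfloor \frac{\partial V_\infty^{lim}}{\partial x_n}(x^*(t))\rceil^0$. It then uses $V_\infty^{lim}\in\mathcal{C}^1$ to differentiate $V_\infty^{lim}$ along that trajectory by the chain rule, and cancels terms with the HJB equation. You instead take an arbitrary minimizer from Theorem~\ref{theorem1-inf}. You obtain $\frac{d}{dt}V_\infty^{lim}(x^\infty(t))=-F(x^\infty(t))$ directly from the dynamic-programming identity $V_\infty^{lim}(x^\infty(t))=\int_t^\infty F(x^\infty(s))\,ds$.

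Your final chain of inequalities uses only three ingredients: this identity, the super-solution inequality, and $|u^\infty|\leq 1$. The intermediate step where you compare with \eqref{eq:HJB-inf} to get $\frac{\partial V_\infty^{lim}}{\partial x_n}u^\infty=-\bigl|\frac{\partial V_\infty^{lim}}{\partial x_n}\bigr|$ is never used afterwards. If you delete it, your argument proves Item 2 without the hypothesis $V_\infty^{lim}\in\mathcal{C}^1$. It also avoids the Filippov-limit construction behind Theorem~\ref{HJB-inf}, whose proof passes through Theorem~\ref{theorem2bis}, stated under \ref{ass1bis}. Your direct derivation of $x^\infty(t)\to 0$, from the vanishing tail and the lower bound $\alpha_-$, removes the last such dependence.

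The paper's route keeps the structural parallel with Proposition~\ref{prop13} and exhibits an optimal trajectory in (generalized) feedback form. As a verification argument, yours is more economical and more general. The same shortcut works verbatim for Proposition~\ref{prop13}, with Young's inequality replacing $|u|\leq 1$.
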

\begin{proof}
The proof follows similar lines to those of Proposition~\ref{prop13}, hence we will only single out the main differences. 
If $W$ is a sub-solution of \eqref{eq:HJB-inf} then we consider a solution $x_W$ with initial condition $x_0$ of the differential inclusion $\dot x\in J_nx - e_n \left\lfloor \frac{\partial W}{\partial x_n}(x)\right\rceil^0$. 
Such a solution exists over $\mathbb{R}_{\geq 0}$ since the right-hand side is an upper semi-continuous set-valued function with sublinear growth taking compact and convex values.
As in the proof of Proposition~\ref{prop13}, one has
\begin{align*}
    V_\infty^{lim}(x_0) \leq \int_0^\infty F(x_W(s))\, ds \leq W(x_0).
\end{align*}
Conversely, assume that $W$ is a super-solution of \eqref{eq:HJB-inf}. By Theorem~\ref{HJB-inf} for every initial condition $x_0
\in\mathbb{R}^n$ there exists an optimal solution $x^*$ of $(OCP)_\infty^{lim}$ such that the corresponding optimal control $u^*$ satisfies $u^*(t)\in -\left\lfloor \frac{\partial V_\infty^{lim}}{\partial x_n}(x^*(t))\right\rceil^0$ for almost every $t\geq 0$. Then, 
\[\frac{d}{dt}(V_\infty^{lim}-W)(x^*(t)) = (\nabla V_\infty^{lim}(x^*(t))-\nabla W(x^*(t))^\top (J_n x^*(t)+e_n u^*(t))\]
for almost every $t\geq 0$.
Following the proof of Proposition~\ref{prop13}, we then obtain 
   \[\frac{d}{dt}(V_\infty^{lim}-W)(x^*(t)) \leq   - \left|\frac{\partial W}{\partial x_n}(x^*(t))\right|+ \frac{\partial W}{\partial x_n}(x^*(t)) u^*(t)\leq 0.\] 
By integrating the previous inequality over $\mathbb{R}_{\geq 0}$ we obtain $W(x_0)\leq V_\infty^{lim}(x_0)$ so that, by arbitrariness of $x_0$, the proposition is proved. 
\end{proof}
\begin{remark}
\label{rem:hJB-lim}
It follows from Proposition~\ref{prop13-limit} that, whenever the value function $V_\infty^{lim}$ is continuously differentiable, then it is the unique solution of \eqref{eq:HJB-inf} among positive definite $\mathcal{C}^1$ functions defined in $\mathbb{R}^n$.
\end{remark}

\subsection{Finite-time convergence}

We next introduce a homogeneity assumption that, as we will see, enforces the finite-time convergence of the solutions of $(OCP)_\infty^{lim}$.

\medskip

\begin{enumerate}[label= (A\arabic*)$'$, leftmargin=1cm]
\setcounter{enumi}{1}
    \item \label{ass3}  Let $d,\mu \in \mathbb{R}_{>0}$ such that $\mu < \frac{d}{n}$ and define $r\in \mathbb{R}_{>0}^n$ by $r_i =(n-i+1)\mu$ for $i=1,\dots,n$. Then $F$ is homogeneous of degree $d$ with respect to the family $(\delta^r_\epsilon)_{\epsilon>0}$, i.e., 
   
    $F(\delta_\epsilon^r(x)) = \epsilon^d F(x)$ for all $\epsilon > 0$ and $x \in \mathbb{R}^n$.
\end{enumerate}
\begin{remark}
The right-hand of \eqref{eq-Ham3} defines a (set-valued) vector field $\mathcal{F}_\infty$ and, similarly to Remark~\ref{rem:HF-homogeneity},  this Hamiltonian vector field is homogeneous with respect to a family of dilations defined 
as follows. Let $s=(s_1,\cdots,s_n)$ with 
$s_i=d-(n-i)\mu$ for $1\leq i\leq n$.
Then $\mathcal{F}_\infty$ is homogeneous of degree $-\mu$ with respect to the family of dilations  
    $(\delta^{(r,s)}_\epsilon)_{\epsilon > 0}$, where $\delta^{(r,s)}_\epsilon(x,p):= (\delta^r_\epsilon(x), \delta^s_\epsilon(p))$ is defined on $\mathbb{R}^n\times\mathbb{R}^n$. 
\end{remark}
\noindent 
Similarly to Lemma~\ref{lem-hom-opt-0}, one gets that,
under Assumption~\ref{ass3},
applying the dilation $\delta_\epsilon^r$ to a solution of $(OCP)_\infty^{lim}$ preserves its 
optimality, up to a time rescaling.
\begin{lemma}
\label{lem-hom-opt}
Assume that $F$ satisfies \ref{ass3}, in addition to \ref{ass1}. Let $ x^\infty(\cdot)$ be an optimal trajectory for $(OCP)_\infty^{lim}$. Then, for every $\epsilon>0$, the map $x^\infty_\epsilon(t) := \delta^r_\epsilon (x^\infty(\epsilon^{-\mu}t))$ also corresponds to  an optimal trajectory for $(OCP)_\infty^{lim}$.  Moreover $V_\infty^{lim}$ is homogeneous of degree $d +\mu$ with respect to  $(\delta^r_\epsilon)_{\epsilon>0}$. 
\end{lemma}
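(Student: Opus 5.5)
The plan is to mimic the proof of Lemma~\ref{lem-hom-opt-0}: build a cost-scaling bijection on the admissible controls that is compatible with the dilation. Fix $\epsilon>0$. Given a trajectory $x$ of \eqref{n-integrator} with control $u\in L^\infty(\mathbb{R}_{\geq 0},[-1,1])$, set $x_\epsilon(t):=\delta^r_\epsilon(x(\epsilon^{-\mu}t))$. I will first check that $x_\epsilon$ solves \eqref{n-integrator} with control $u_\epsilon(t):=u(\epsilon^{-\mu}t)$, i.e.\ with no amplitude rescaling.

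For $i<n$ one has
\[
\dot x_{\epsilon,i}(t)=\epsilon^{r_i-\mu}x_{i+1}(\epsilon^{-\mu}t),
\]
and $r_i-\mu=(n-i)\mu=r_{i+1}$, so $\dot x_{\epsilon,i}(t)=x_{\epsilon,i+1}(t)$. For $i=n$ one has
\[
\dot x_{\epsilon,n}(t)=\epsilon^{r_n-\mu}u(\epsilon^{-\mu}t)=u(\epsilon^{-\mu}t),
\]
since $r_n=\mu$. This is exactly the reason the weights in \ref{ass3} are chosen as $r_i=(n-i+1)\mu$: the control scales with degree $0$. Consequently the map $u\mapsto u_\epsilon$ is a bijection of $L^\infty(\mathbb{R}_{\geq 0},[-1,1])$ onto itself, with inverse $u\mapsto u_{1/\epsilon}$, and it preserves the constraint $|u|\le 1$.

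Next I will compute the cost. Using the homogeneity of $F$ in \ref{ass3} and the change of variables $s=\epsilon^{-\mu}t$,
\[
\mathcal{K}_\infty(\delta^r_\epsilon(x(0)),u_\epsilon)=\int_0^\infty F(\delta^r_\epsilon(x(\epsilon^{-\mu}t)))\,dt=\epsilon^{d}\int_0^\infty F(x(\epsilon^{-\mu}t))\,dt=\epsilon^{d+\mu}\,\mathcal{K}_\infty(x(0),u).
\]
This identity holds in $[0,\infty]$, so no integrability issue arises.

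Taking the infimum over $u$ and using the bijectivity of $u\mapsto u_\epsilon$ gives
\[
V^{lim}_\infty(\delta^r_\epsilon(x_0))=\epsilon^{d+\mu}V^{lim}_\infty(x_0),
\]
which is the claimed homogeneity of degree $d+\mu$. For the optimality statement, if $u$ attains $V^{lim}_\infty(x_0)$ (it exists by Theorem~\ref{theorem1-inf}), then
\[
\mathcal{K}_\infty(\delta^r_\epsilon x_0,u_\epsilon)=\epsilon^{d+\mu}V^{lim}_\infty(x_0)=V^{lim}_\infty(\delta^r_\epsilon x_0),
\]
so $x^\infty_\epsilon$ is optimal from $\delta^r_\epsilon(x^\infty(0))$. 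No uniqueness is needed, since the statement only asserts that $x^\infty_\epsilon$ is \emph{an} optimal trajectory.

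The argument is essentially routine. The only point requiring care is the verification that the control constraint $[-1,1]$ is preserved, which hinges on $r_n=\mu$. The condition $\mu<d/n$ plays no role in this lemma; it ensures $r_1<d$ and is presumably needed later for finite-time convergence.
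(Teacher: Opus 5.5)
Your proposal is correct and follows the route the paper intends. The paper proves this lemma only by saying it holds ``similarly to Lemma~\ref{lem-hom-opt-0}'', that is, via the control bijection $u\mapsto u_\epsilon$ and the scaling $\mathcal{K}_\infty(\delta^r_\epsilon x_0,u_\epsilon)=\epsilon^{d+\mu}\mathcal{K}_\infty(x_0,u)$, where the amplitude factor $\epsilon^{d/q}$ becomes $\epsilon^0$ because $r_n=\mu$. Your explicit check that $[-1,1]$ is preserved, and your remark that uniqueness is not needed, are exactly the points that adapt that argument to the limit problem.
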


Using the previous lemma, the proof of Theorem~\ref{theorem-finitetime} can be adapted to solutions of $(OCP)_\infty^{lim}$ in a straightforward manner. Hence, we get the following result.
\begin{theorem}
    \label{theorem-finitetime-limit}
    Assume that $F$ satisfies \ref{ass3}, in addition to \ref{ass1}. Let  $x_0\in \mathbb{R}^n$ and $x^\infty$ be an optimal trajectory for $(OCP)_\infty^{lim}$ starting at $x_0$. Then $x^{\infty}$ reaches the origin in finite time. Moreover, the settling-time map $x_0\mapsto T_{x_0}$ is 
    homogeneous of degree $\mu$ with respect to $(\delta_\epsilon^r)_{\epsilon>0}$ and 
    $T_{x_0}\leq \frac{d+\mu}{c\mu}\,V_{\infty}^{lim}(x_0)^\frac{\mu}{d+\mu}$ for every $x_0\in\mathbb{R}^n$, where $c = \max_{x \in (V_\infty^{lim})^{-1}(1)} F(x)$.

\end{theorem}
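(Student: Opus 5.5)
The plan is to adapt the proof of Theorem~\ref{theorem-finitetime}, replacing the chain-rule identity along optimal trajectories with the dynamic programming principle. Fix $x_0$ and an optimal trajectory $x^\infty$ for $(OCP)_\infty^{lim}$ starting at $x_0$. For each $t\geq 0$, the restriction $x^\infty(t+\cdot)$ is optimal from $x^\infty(t)$. Otherwise, concatenating $x^\infty|_{[0,t]}$ with a better control from $x^\infty(t)$ would decrease the cost from $x_0$; the concatenated control still lies in $L^\infty(\mathbb{R}_{\geq0},[-1,1])$, so this is admissible. Hence
\[
V_\infty^{lim}(x^\infty(t))=\int_t^\infty F(x^\infty(s))\,ds .
\]
In particular, $t\mapsto V_\infty^{lim}(x^\infty(t))$ is absolutely continuous and non-increasing, with derivative $-F(x^\infty(t))$ for a.e.\ $t$ (indeed for every $t$, by continuity of $F\circ x^\infty$). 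This route avoids any differentiability of $V_\infty^{lim}$, which is not available here; that is why I use the integral identity rather than Theorem~\ref{HJB-inf}.

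Next I compare $F$ with a power of $V_\infty^{lim}$. By Lemma~\ref{lem-hom-opt}, $V_\infty^{lim}$ is homogeneous of degree $d+\mu$ with respect to $(\delta^r_\epsilon)_{\epsilon>0}$. By Theorem~\ref{theorem1-inf} it is positive definite and convex, hence continuous. Since $F$ is continuous, positive definite and homogeneous of degree $d$ (Assumption~\ref{ass3}), Lemma~\ref{lem1} gives $F\geq c\,(V_\infty^{lim})^{\frac{d}{d+\mu}}$. The constant can be made explicit by writing any $x\neq0$ as $\delta^r_\epsilon(y)$ with $V_\infty^{lim}(y)=1$. This gives the differential inequality
\[
\tfrac{d}{dt}V_\infty^{lim}(x^\infty(t))\leq -c\,V_\infty^{lim}(x^\infty(t))^{\frac{d}{d+\mu}} .
\]
Monotonicity shows that once $V_\infty^{lim}(x^\infty)$ vanishes it stays zero, so the first hitting time of the origin is the settling time $T_{x_0}$. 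On $[0,T_{x_0})$, I multiply by $\frac{\mu}{d+\mu}(V_\infty^{lim})^{-\frac{d}{d+\mu}}$ and integrate, which yields $T_{x_0}\leq \frac{d+\mu}{c\mu}V_\infty^{lim}(x_0)^{\frac{\mu}{d+\mu}}$. This bound also shows $T_{x_0}<\infty$, since otherwise the right-hand side would become negative.

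For homogeneity of the settling time, I use the time rescaling of Lemma~\ref{lem-hom-opt}. If $x^\infty$ is optimal from $x_0$ and reaches $0$ at time $T$, then $\delta^r_\epsilon(x^\infty(\epsilon^{-\mu}\cdot))$ is optimal from $\delta^r_\epsilon(x_0)$ and reaches $0$ at time $\epsilon^{\mu}T$.

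The main obstacle is well-definedness. Without \ref{ass1bis}, optimal trajectories need not be unique, so $T_{x_0}$ must be read either as the settling time of a given optimal trajectory or as a supremum over optimal trajectories. Under either reading, homogeneity holds because the dilation map is a bijection between the sets of optimal trajectories from $x_0$ and from $\delta^r_\epsilon(x_0)$ (its inverse is the dilation with $\epsilon^{-1}$). The bound holds uniformly because it depends only on $V_\infty^{lim}(x_0)$. I would state this explicitly and otherwise follow the proof of Theorem~\ref{theorem-finitetime} line by line.
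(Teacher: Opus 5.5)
Your argument is essentially the paper's. The paper only says to adapt the proof of Theorem~\ref{theorem-finitetime}. There, the identity $\frac{d}{dt}V(x^\infty(t))=-\mathcal{L}$ along optimal trajectories is obtained, as in the proof of Theorem~\ref{prop11}, from the dynamic-programming identity $V(x^\infty(t))=\int_t^\infty\mathcal{L}\,ds$, not from differentiability of $V$. So your ``replacement'' of the chain rule is in fact the same step. Two things you add are useful and left implicit in the paper: the explicit argument that $T_{x_0}<\infty$, and the remark on how to read $T_{x_0}$ when optimal trajectories are not unique (only \ref{ass1} is assumed).

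One thing you must correct is the constant. Write $x=\delta^r_\epsilon(y)$ with $V^{lim}_\infty(y)=1$, as you propose. Then $F(x)=F(y)\,V^{lim}_\infty(x)^{\frac{d}{d+\mu}}$, hence $F\geq c\,(V^{lim}_\infty)^{\frac{d}{d+\mu}}$ with $c=\min_{(V^{lim}_\infty)^{-1}(1)}F$. This is the minimum, not the maximum written in the statement.

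With $c_{\max}:=\max_{(V^{lim}_\infty)^{-1}(1)}F$ the inequality reverses. The same integration then gives the \emph{lower} bound $T_{x_0}\geq\frac{d+\mu}{c_{\max}\mu}V^{lim}_\infty(x_0)^{\frac{\mu}{d+\mu}}$. This lower bound is strict at any $x_0$ on the unit level set where $F(x_0)<c_{\max}$. Consequently the upper bound as stated fails unless $F$ is constant on that level set.

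This slip is inherited from the statement and proof of Theorem~\ref{theorem-finitetime}. Your argument proves the bound with the minimum, and you should say so explicitly rather than silently reusing the statement's $c$.
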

We next show that, similarly to Proposition~\ref{cor3}, solutions of $(OCP)^{lim}_\infty$ are also solutions of $(OCP)^{lim}_T$ for $T$ large enough, and we deduce from this fact that $V^{lim}_\infty$ is continuously differentiable.
\begin{proposition}
Assume that $F$ satisfies \ref{ass1bis} and \ref{ass3}. Let $x^{\infty}$ and $T_{x_0}$ as in Theorem~\ref{theorem-finitetime-limit} and denote as $(x^{\infty},p^{\infty})$ the associated extremal lift.
Then, $(x^{\infty}(t),p^{\infty}(t))=(0,0)$ for every $t\geq T_{x_0}$. Moreover, for every $T\geq T_{x_0}$ the restriction of $(x^{\infty},p^{\infty})$ to $[0, T]$ corresponds to the unique extremal lift associated with the solution of $(OCP)_{T}^{lim}$. Furthermore, the value function $V_\infty^{lim}$ is $\mathcal{C}^1$.    
\end{proposition}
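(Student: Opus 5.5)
The plan is to follow the proof of Proposition~\ref{cor3} for the first two assertions. The $\mathcal{C}^1$ regularity will then come from identifying the subdifferential of $V_\infty^{lim}$ with a singleton, using the uniqueness of extremal lifts.

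\emph{Vanishing of the lift.} By Theorem~\ref{theorem-finitetime-limit}, $x^\infty(t)=0$ for $t\geq T_{x_0}$, hence $u^\infty=\dot x^\infty_n=0$ a.e. on $[T_{x_0},\infty)$. The costate then solves $\dot p^\infty=-J_n^\top p^\infty$ there, because $\nabla F(0)=0$. So $p^\infty_n$ is a polynomial on $[T_{x_0},\infty)$. It is bounded and tends to $0$ by Theorem~\ref{theorem2bis}, Item~2, so it is constant equal to zero. Observability of $(-J_n^\top,e_n^\top)$ then gives $p^\infty\equiv 0$ on $[T_{x_0},\infty)$.

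\emph{Optimality for $(OCP)^{lim}_T$.} In particular $p^\infty(T)=0$, so the restriction to $[0,T]$ is an extremal pair satisfying the transversality condition. Next I need a uniqueness statement for \eqref{eq-Ham3} with $p(T)=0$, the analogue of Proposition~\ref{prop5}. Set $\Phi=(x-\bar x)^\top(p-\bar p)$. Then $\dot\Phi=(u-\bar u)(p_n-\bar p_n)+(x-\bar x)^\top(\nabla F(x)-\nabla F(\bar x))$, where $u\in\lfloor p_n\rceil^0$ and $\bar u\in\lfloor\bar p_n\rceil^0$. Both terms are nonnegative, by monotonicity of the sign map and convexity of $F$, and $\Phi(0)=\Phi(T)=0$. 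Hence both terms vanish a.e. Strict convexity (A1)$'$ makes the second term force $x=\bar x$. Then $\frac{d}{dt}(p-\bar p)=-J_n^\top(p-\bar p)$ together with $(p-\bar p)(T)=0$ gives $p=\bar p$. Unlike Proposition~\ref{prop5}, the strict convexity is what is used here to conclude $x=\bar x$, since the sign map does not determine $u$. By Theorem~\ref{theorem1-inf}, the optimal trajectory of $(OCP)^{lim}_T$ is unique and admits a PMP lift. That lift therefore coincides with the restriction of $(x^\infty,p^\infty)$.

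\emph{$\mathcal{C}^1$ regularity.} I would mimic Proposition~\ref{prop14} and show $\partial V^{lim}_\infty(x_0)=\{-p^\infty(0)\}$. Then \cite[Theorem 25.1, Corollary 25.5.1]{rockafellar1997convex} gives differentiability, and continuity of the gradient follows for convex functions. The perturbation $u_K=u^\infty+K^\top(x_K-x^\infty)$ used there is not admissible, since it may leave $[-1,1]$. Instead I would use the finite-horizon identification. For $T\geq T_{x_0}$ one has $V^{lim}_\infty(x_0)=V^{lim}_T(x_0)$, while $V^{lim}_T\leq V^{lim}_\infty$ everywhere. So any $v\in\partial V^{lim}_T(x_0)$ is a subgradient of $V^{lim}_\infty$ at $x_0$, and I then need the reverse inclusion as well.

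A cleaner route is to compute the superdifferential directly. For a perturbed initial point $x_0+h\delta_0$, keep the control $u^\infty$ on $[0,T]$ with $T=T_{x_0}+1$, and afterwards steer the small residual state $h e^{J_nT}\delta_0$ to the origin. The homogeneity (Lemma~\ref{lem-hom-opt}) says this can be done with cost $O(|h|^{(d+\mu)/r_n})$, because $V_\infty^{lim}$ is homogeneous of degree $d+\mu$, which exceeds every $r_i$. The resulting cost is $V^{lim}_\infty(x_0)+h\int_0^T\nabla F(x^\infty)^\top e^{J_nt}\delta_0\,dt+o(h)$. Integrating by parts with the costate equation and $p^\infty(T)=0$ turns the linear term into $-h\,p^\infty(0)^\top\delta_0$ plus $h\int_0^T p^\infty_n u$-type terms, which vanish since the control is unchanged. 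This yields the one-sided bound that forces every subgradient to equal $-p^\infty(0)$.

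The main obstacle is this last step: constructing admissible perturbations and controlling the error uniformly. The point is to bound the tail cost via homogeneity of $V^{lim}_\infty$ and Lemma~\ref{lem1}, namely $V^{lim}_\infty(y)\leq c\sum|y_i|^{(d+\mu)/r_i}=o(\|y\|)$.
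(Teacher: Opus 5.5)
Your first two assertions follow the paper, which simply refers back to Propositions~\ref{cor3} and~\ref{prop5}. You usefully make explicit a point the paper glosses: with the sign map, $(u-\bar u)(p_n-\bar p_n)=0$ no longer forces $p_n=\bar p_n$, so finite-horizon uniqueness must come from the strict convexity in \ref{ass1bis}.

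For the $\mathcal{C}^1$ claim you take a genuinely different and correct route.

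\textbf{The paper's route.} It invokes \cite[Theorem 4.6]{goebel2004regularity} to get $V_T^{lim}\in\mathcal{C}^1$ for finite $T$. It then uses the settling-time bound and the continuity of $V_\infty^{lim}$ to show $V_\infty^{lim}=V_T^{lim}$ on a neighborhood of $x_0$.

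\textbf{Your route.} You bound $V_\infty^{lim}(x_0+h\delta_0)$ from above with an admissible perturbation: keep $u^\infty$ on $[0,T]$, unlike the $u_K$ of Proposition~\ref{prop14}, then steer the residual $he^{J_nT}\delta_0$ optimally. The argument uses three facts:
\begin{itemize}
\item $x^\infty(T)=0$ and $p^\infty(T)=0$, by the first part;
\item $V_\infty^{lim}(y)=o(\|y\|)$, by Lemma~\ref{lem-hom-opt} and Lemma~\ref{lem1}, since $d+\mu>n\mu\geq r_i$ for every $i$;
\item the identity $\frac{d}{dt}\big(p^\infty(t)^\top e^{J_nt}\delta_0\big)=\nabla F(x^\infty(t))^\top e^{J_nt}\delta_0$.
\end{itemize}
Together they give $V_\infty^{lim}(x_0+h\delta_0)-V_\infty^{lim}(x_0)\leq -h\,p^\infty(0)^\top\delta_0+o(h)$ for both signs of $h$. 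Hence every subgradient equals $-p^\infty(0)$, and Rockafellar's results conclude as in Proposition~\ref{prop14}.

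\textbf{Comparison.} Your argument is self-contained: it never has to check that $(OCP)_T^{lim}$ fits Goebel's framework. It also yields the explicit formula $\nabla V_\infty^{lim}(x_0)=-p^\infty(0)$, the exact analogue of Proposition~\ref{prop14}, which ties the feedback in Theorem~\ref{HJB-inf} to the costate. The paper's route is shorter given the external theorem, and gives the local identity $V_\infty^{lim}=V_T^{lim}$ near each point, which has some independent interest.

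Two minor remarks. No ``$p_n u$-type terms'' actually arise in the integration by parts, since the control is not perturbed. Your preliminary attempt via $\partial V_T^{lim}(x_0)\subset\partial V_\infty^{lim}(x_0)$ is unnecessary and can be dropped.
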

\begin{proof}
The first part of the proposition is obtained by using the same arguments as in the proof of Proposition~\ref{cor3}. Concerning the regularity of $V_\infty^{lim}$, note that the optimal control problem~$(OCP)_T^{lim}$ can be recast within the framework discussed in~\cite{goebel2004regularity}, and in particular the function $V_T^{lim}$ is continuously differentiable thanks to~\cite[Theorem 4.6]{goebel2004regularity}. By the first part of the proposition it follows that $V_\infty^{lim}(x_0)=V_T^{lim}(x_0)$ for $T\geq T_{x_0}$. In particular, if we choose any $T>\frac{d+\mu}{c\mu}\,{V_{\infty}^{lim}}(x_0)^\frac{\mu}{d+\mu}$ where $c = \max_{x \in (V_\infty^{lim})^{-1}(1)} F(x)$ then, according to Theorem~\ref{theorem-finitetime} and by continuity of $V_\infty^{lim}$, one has that $T>T_x$ for $x$ in a small enough neighborhood $U$ of $x_0$. Hence $V_\infty^{lim}=V_T^{lim}$ on $U$ proving the continuous differentiability of $V_\infty^{lim}$ at $x_0$ and therefore, by arbitrariness of $x_0$, on the whole $\mathbb{R}^n$.
\end{proof}

\begin{remark}
    By the previous result and 
    Remark~\ref{rem:hJB-lim}, if $F$ satisfies \ref{ass1bis} and \ref{ass3} then the value function satisfies the following Hamilton-Jacobi-Bellman equation
    \[\nabla V_\infty^{lim}(x)^\top J_nx -\left|\frac{\partial V_\infty^{lim}}{\partial x_n}(x)\right| + F(x) = 0,\qquad \forall x \in \mathbb{R}^n. \]
    On the other hand, similarly to Proposition~\ref{cor7}, if $W\in\mathcal{C}^1(\mathbb{R}^n,\mathbb{R})$ is positive definite and such that 
    $F_W(x):= \left|\frac{\partial W}{\partial x_n}(x)\right|-\nabla W(x)^\top J_n x$
    is a set-valued map bounded from below and above by positive definite functions homogeneous of degree $d$ with respect to $(\delta_\epsilon^r)_{\epsilon>0}$ then one can show that the trajectories of~\eqref{n-integrator} whose control input satisfies $u(t)\in -\left\lfloor\frac{\partial W}{\partial x_n}(x(t))\right\rceil^0$ for almost every $t\geq 0$ converge to the origin in finite time. 
\end{remark}
\begin{remark}
\label{rk:sing-arc}
A numerical analysis of the optimal control problem $(OCP)_{T}^{lim}$ for $F$ satisfying the homogeneity assumption \ref{ass3} suggests that optimal trajectories are, generically, concatenations of bang and singular arcs. Recall that bang arcs correspond to time intervals where $p_n\neq 0$ (hence the optimal control is identically equal to 1 or -1), while a singular arc corresponds to an interval where $p_n$ is identically equal to zero. In particular, the value of the control input along singular arcs can be obtained by imposing $p_n$ and its derivatives equal to zero. If, for simplicity, we assume $F\in \mathcal{C}^2$ with Hessian $\nabla^2 F$ positive definite, this yields the singular control
\begin{equation}
    \label{eq:sing}
    u_{sing} = \frac{\nabla F(x)^\top J_ne_n -p^\top J^2_ne_n-e_n^\top \nabla^2 F(x)J_n x}{e^\top_n\nabla^2F(x) e_n},
\end{equation}
in case the latter expression belongs to $[-1,1]$, and with the additional constraints $p_n=0$ and $e_n^\top J_n^\top p = \frac{\partial F}{\partial x_n}(x)$. 
\end{remark}

\section{Numerical implementations}
\label{sec:simu}
In this section, we present some numerical simulations that illustrate the finite-time convergence of optimal trajectories of $(OCP)_\infty$ and $(OCP)_\infty^{lim}$ under suitable homogeneity assumptions, as established in Theorem~\ref{theorem-finitetime} and Theorem~\ref{theorem-finitetime-limit}.
 
Specifically, we display the graphs of some optimal trajectories and corresponding controls with $q=2$ and $q=\infty$, assuming that the function $F$ in the instantaneous cost satisfies the homogeneity assumption~\ref{ass2} and \ref{ass3}, respectively. %selected parameters $d$ and $\mu$ matching the choice of the parameter $q$. 
The simulations are performed using the \textit{Julia} packages NLPModelsIpopt.jl and OptimalControl.jl~\cite{Caillau_OptimalControl_jl_a_Julia}. %\cite[Packages: OptimalControl.jl, NLPModelsIpopt]{Caillau_OptimalControl_jl_a_Julia}.
We first consider a chain of integrators of length six with $q=2$, $d=1$, $\mu=0.05$, and the cost function $F$ given by
\begin{equation}
\label{F-sim-1}
F(x):= |x_1|^\frac{5}{4}+|x_2|^\frac{4}{3}+|x_3|^\frac{10}{7}+|x_4|^\frac{20}{13}+|x_5|^\frac{5}{3}+|x_6|^\frac{20}{11},\qquad x\in \mathbb{R}^6.
\end{equation}
Note that the exponents of $F$ are obtained as the inverse of the weights $r_i$ in~\ref{ass2}.
Fig.~\ref{fig1} displays an optimal trajectory and the corresponding control input computed via a direct method, with a time discretization step equal to 0.12. 
\begin{figure}[h]
    \centering
    \resizebox{0.7\textwidth}{0.3\textheight}{
    \includegraphics{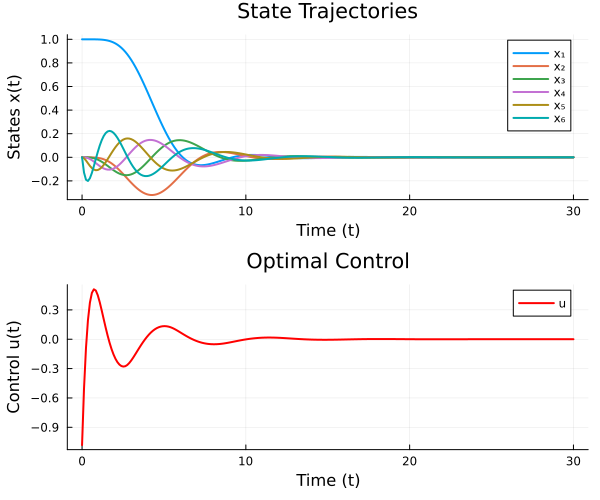}
    }
    \caption{Optimal state and control associated with $q=2$ and with $F$ given by~\eqref{F-sim-1} starting from (1,0,0,0,0,0) with a direct method.}
    \label{fig1}
\end{figure}
\FloatBarrier
For $q=+\infty$, we consider a chain of integrators of length four. Take $d=8$, $\mu=1$ and the cost function given by 
\begin{equation}
\label{F-sim-2}
F(x) :=x_1^2 + |x_2|^\frac{8}{3} + x_3^4 + x_4^8,\qquad x\in \mathbb{R}^4.
\end{equation}
Note that the exponents of $F$ are obtained as $d/r_i$,  
$r_i$ being the weights in~\ref{ass3}. Notice that $F$ is twice differentiable, so that the Hessian matrix appearing in~\eqref{eq:sing} is well-defined. 
In accordance with Remark~\ref{rk:sing-arc}, the direct method generates an optimal control which appears to be a concatenation of bang and singular arcs. Optimal controls computed via the direct method exhibit an oscillating behavior in correspondence with singular arcs; such oscillations vanish for a sufficiently small time discretization step, although spikes (interpretable as artifacts of the direct method) appear at switching times between bang and singular arcs. Alternatively, optimal trajectories can be computed numerically via an indirect shooting method which relies on \eqref{eq:sing} on singular arcs, and is initialized with the switching times and values of the variable $p$ obtained thanks to the direct method.
Fig.~\ref{fig2} displays the reconstruction of an optimal trajectory and the corresponding control by means of both direct and indirect methods, showing a precise matching between the corresponding solutions; the time discretization step has been taken equal to 0.0027.

\begin{figure}[h]
    \centering
    \resizebox{0.9\textwidth}{0.4\textheight}{
        \includegraphics{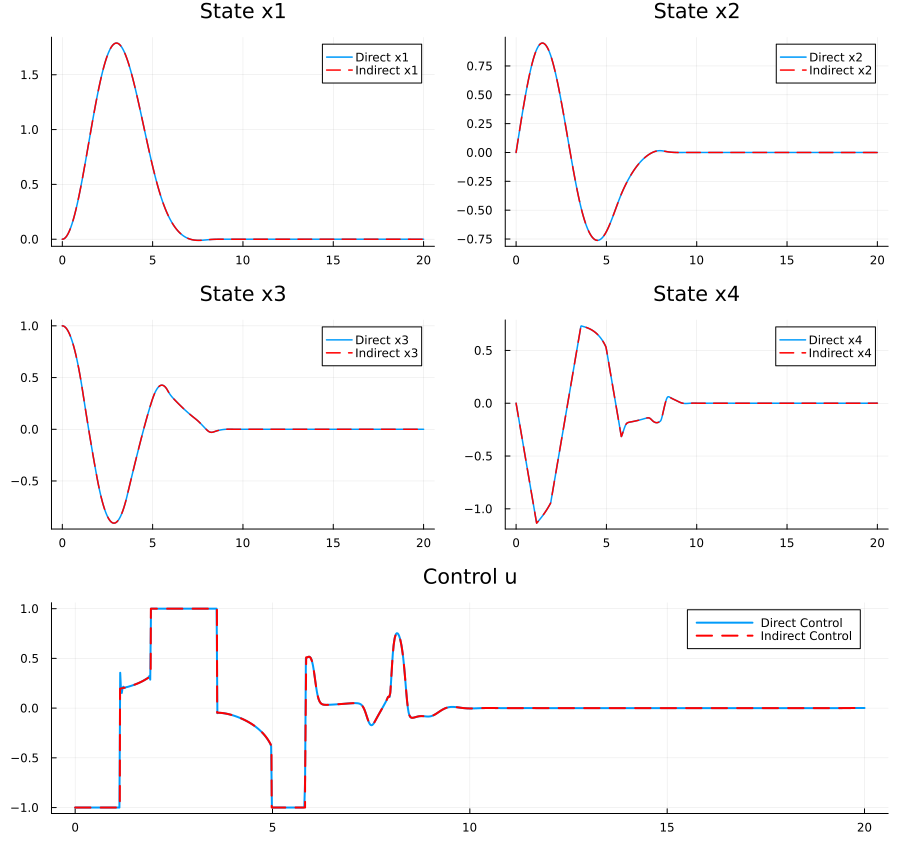}
    }
    \caption{Optimal state and control associated with $q=\infty$ 
    and with $F$ given by~\eqref{F-sim-2} starting from (0,0,1,0) obtained with a direct method and with an indirect shooting method.}
    \label{fig2}
\end{figure}
\FloatBarrier

\bibliographystyle{siamplain}
\bibliography{references}
\end{document}